\newtheorem{theorem}{Theorem}[section]
\newtheorem{proposition}{Proposition}[section]
\newtheorem{lemma}{Lemma}[section]
\newtheorem{corollary}{Corollary}[section]
\newtheorem{remark}{Remark}[section]
\numberwithin{equation}{section}
\title[Global logarithmic global stability of a Cauchy problem]{Global logarithmic stability of  a Cauchy problem for anisotropic wave equations}
\author[Mourad Bellasoued]{Mourad Bellassoued}
\address{University Tunis El Manar, ENIT-LAMSIN, BP 37, 1002, Tunis, Tunisia}
\email{mourad.bellassoued@enit.utm.tn}
\author[Mourad Choulli]{Mourad Choulli}
\address{Universit\'e de Lorraine, 34 cours L\'eopold, 54052 Nancy cedex, France}
\email{mourad.choulli@univ-lorraine.fr}
\thanks{MC is supported by the grant ANR-17-CE40-0029 of the French National Research Agency ANR (project MultiOnde). }
\date{}
\begin{document}

\begin{abstract}
We discuss  a Cauchy problem for anisotropic wave equations. Precisely, we address the question to know which kind of Cauchy data on the lateral boundary  are necessary  to guarantee the uniqueness of continuation of solutions of an anisotropic wave equation. In the case where the uniqueness holds, the natural problem that arise naturally in this context is to estimate the solutions, in some appropriate space, in terms of norms of  the Cauchy data. 

We aim in this paper to convert, via a reduced Fourrier-Bros-Iagolnitzer transform, the known stability of the Cauchy problem for anisotropic elliptic equations to stability of a Cauchy problem for anisotropic waves equations. By proceeding in that way one the main difficulties is to control the residual terms, induced by the reduced Fourrier-Bros-Iagolnitzer transform, by a Cauchy data. Also,  a uniqueness of continuation result, from Cauchy data, is obtained as byproduct of stability results.
\end{abstract}

\subjclass[2010]{35B60}

\keywords{Anisotropic wave equations, Cauchy problem, uniqueness of continuation,  Fourrier-Bros-Iagolnitzer transform, stability estimate.}

\maketitle

\tableofcontents

\section{Introduction}\label{introduction}

\subsection{What is our objective ?}

Let $\Omega$ be a Lipschitz bounded domain of $\mathbb{R}^n$, $n\ge 2$ and $T>0$. Consider then the anisotropic wave equation with matrix coefficients $A(x)=(a^{ij}(x))$,
\begin{equation}\label{0.1}
Pu=\mbox{div}(A\nabla u)-\partial_t^2u=0 \quad \mbox{in}\; \Omega \times (-T,T).
\end{equation}
Suppose that $A\in C^1(\overline{\Omega},\mathbb{R}^{n\times n})$ satisfies, for some $\kappa >1$,
\[
\kappa^{-1} |\xi|^2\le A(x)\xi \cdot \xi \le \kappa |\xi|^2,\quad x\in \overline{\Omega}\; \mathrm{and}\; \xi \in \mathbb{R}^n.
\]
Let $\omega \Subset \Omega$ and $d=\mathrm{diam}(\Omega)$. From \cite[Theorem 1 in page 791]{Ro95} we have the following uniqueness of continuation result from an interior data : 

there exists  a constant $\varkappa=\varkappa (\kappa)>0$ so that, if $T>\varkappa d$ and $u\in H^2 (\Omega \times (-T,T))$ satisfies 
\[
Pu=0\; \mathrm{in}\; \Omega \times (-T,T)\quad \mathrm{and}\quad  u=0\;  \mathrm{in}\; \omega \times (-T,T),
\]
then 
\[
u=0\; \mathrm{in}\;  \Omega \times (-(T-\varkappa d),T-\varkappa d), 
\]

Denote by  $\nu$  the unit normal outward vector field on $\partial \Omega$ and let $\Gamma$ be an nonempty open subset of $\partial \Omega$. 

Assume that the assumptions on $A$ hold in neighborhood of $\overline{\Omega}$. From the preceding result one can get the following of uniqueness of continuation from Cauchy data:  

if $T>\varkappa d$ and $u\in H^2 (\Omega \times (-T,T))$ satisfies 
\[
Pu=0\; \mathrm{in}\; \Omega \times (-T,T)\quad \mathrm{and}\quad  u=\partial_\nu u=0\;  \mathrm{on}\; \Gamma \times (-T,T),
\]
then 
\[
u=0\; \mathrm{in}\;  \Omega \times (-(T-\varkappa d),T-\varkappa d). 
\]

In the present work we address the issue of quantifying such kind of unique continuation results. The fact that the uniqueness of continuation does not hold in the whole time domain renders the analysis quite complicated as we will see in the sequel.

\subsection {Main result.}

Prior to state our main result we introduce some definitions and notations. 

If $K$ is the closure of an nonempty open bounded subset of $\mathbb{R}^n$, $n\ge 2$, define, where $0< \alpha \le 1$, the semi-norm $[f]_\alpha$ by
\[
[f]_\alpha =\sup \left\{ \frac{|f(x)-f(y)|}{|x-y|^\alpha};\; x,y\in K,\; x\ne y\right\}.
\]
The  usual H\"older space 
\[
C^{0,\alpha} (K)=\{f\in C^0(K);\; [f]_\alpha <\infty\},
\] 
 endowed with the norm
\[
\|f\|_{C^{0,\alpha} (K)}=\|f\|_{L^\infty (K)}+[f]_\alpha ,
\]
is a Banach space.

Also, recall that
\[
C^{1,\alpha} (K)=\{f\in C^1(K);\; \partial_{x_j}f\in C^{0,\alpha} (K)  ,\; 1\le j\le n\},
\]
 is a Banach space when it is equipped with its natural norm
\[
\|f\|_{C^{1,\alpha} (K)}=\|f\|_{L^\infty (K)}+\sum_{j=1}^n\left(\| \partial_{x_j}f\|_{L^\infty (K)}+[\partial_{x_j}f]_\alpha\right).
\]

Let $\Omega$ be a $C^\infty$ bounded domain of $\mathbb{R}^n$, $n\ge 2$. The following notations will be used in the rest of this paper 
\begin{align*}
&I_s=(-s,s),\quad s>0,
\\
&Q_s=\Omega \times I_s,\quad s>0,
\\
&J_\delta =\{ t\in \mathbb{R};\; (1-\delta )T<|t|<T\},\quad 0<\delta <1,
\\
& R_\delta =\Omega \times J_\delta ,\quad 0<\delta <1,
\\
&\Sigma_s=\Gamma \times I_s, \quad s>0,
\end{align*} 
where $\Gamma$ is a given nonempty open subset of $\partial \Omega$.

Denote by $P$ the anisotropic wave operator acting on $Q_T$ as follows 
\[
Pu=\mbox{div}(A\nabla u )-\partial_t^2u,
\]
where $A=(a^{ij})$ is a symmetric matrix with  coefficients in $W^{1,\infty}(\Omega )$ satisfying 
\begin{equation}\label{main1}
\kappa ^{-1}|\xi |^2 \le A(x)\xi \cdot \xi \le \kappa |\xi |^2,\;\;  x\in \Omega , \; \xi \in \mathbb{R}^n,
\end{equation}
and
\begin{equation}\label{main2}
\sum_{k=1}^n\left|\sum_{i,j=1}^n\partial_k a^{ij}(x)\xi _i\xi_j\right| \le \varkappa |\xi|^2,\quad x\in \Omega ,\; \xi \in \mathbb{R}^n,
\end{equation}
for some constants $\kappa \ge 1$ and $\varkappa>0$.

Let
\begin{align*}
&\mathcal{X}_T=C^{1,\alpha}(\overline{Q}_T)\cap H^1(I_T,H^1(\Omega )),
\\
&\mathcal{Y}_T=H^2(I_T,H^2(\Omega ))\cap H^3(I_T,H^1(\Omega ))\cap \mathcal{X}_T.
\end{align*}

These spaces are endowed with their natural norms as intersections of Banach spaces. We recall that if $\mathcal{X}=\mathcal{X}_0\cap \mathcal{X}_1$ is the intersection of two Banach spaces then we usually equip $\mathcal{X}$ with the norm
\[
\|x\|_\mathcal{X}=\|x\|_{\mathcal{X}_0}+\|x\|_{\mathcal{X}_1}.
\]

In the sequel by $H^\ell (\Gamma)$, $\ell\ge 0$, we mean the quotient space
\[
H^\ell (\Gamma )=\{\phi =\psi_{|\Gamma};\; \psi \in H^\ell (\partial \Omega )\},
\]
that we equip with its natural quotient norm
\[
\|\phi\|_{H^\ell (\Gamma )}=\min\{ \|\psi\|_{H^\ell (\partial \Omega)},\; \psi \in H^\ell (\partial \Omega )\; \mbox{such that}\; \psi_{|\Gamma}=\phi\} .
\]

Following Lions-Magenes's notations, we set
\[
H^{1,1}(\Sigma _T)=L^2(I_T,H^1(\Gamma ))\cap H^1(I_T,L^2(\Gamma )).
\]

For $u\in \mathcal{Y}_T$, set
\begin{align*}
&\mathcal{N}(u)=\|u\|_{\mathcal{Y}_T},
\\
&\mathcal{D}(u)=\|Pu\|_{H^1(I_T,L^2(\Omega ))}+\|u\|_{H^{1,1}(\Sigma _T)}+\|\partial_\nu u\|_{L^2(\Sigma_T)},
\\
&\mathfrak{D}_{\delta}(u)=\|u\|_{H^3\left(J_{\delta/2},H^{3/2}(\partial \Omega )\right)}+\|\partial_\nu u\|_{H^1\left(J_{\delta/2}, L^2(\partial \Omega )\right)}.
\end{align*}

Let
\[
\mathfrak{d}=\sup_{y\in \Omega}\inf_{x\in \partial \Omega} d(x,y),
\]
with
\begin{align*}
d(x,y)=\inf\{L(\gamma ); \gamma :[0,1]&\rightarrow \overline{\Omega}
\\
&C^1\mbox{-piecewise path so that}\; \gamma (0)=x,\; \gamma (1)=y\},
\end{align*}
where $L(\gamma)$ denotes the length of $\gamma$.

For $k\ge 2$ an integer, define $\mathfrak{e}_k$  as follows
\[
\mathfrak{e}_k=\exp \circ \exp \circ \ldots \circ \exp \quad (k\mbox{-times}),
\]
where $\exp$ is the usual exponential function.

We aim in the present work to prove the following result.

\begin{theorem}\label{maintheorem}
Let $s\in (0,1/2)$, $T_0>\mathfrak{d}$, $\mathfrak{d}<\sqrt{8}\, \overline{\rho}<T_0$ and $0<\underline{\rho}<\overline{\rho}$. There exist two constants $C>0$ and $c>0$, only depending on $\Omega$, $T_0$, $\kappa$, $\varkappa$, $\underline{\rho}$, $\overline{\rho}$, $s$ and $\alpha$, so that, for any $T\ge T_0$, $\delta \in [\sqrt{8}\, \underline{\rho}/T,\sqrt{8}\, \overline{\rho}/T]$ and $u\in \mathcal{Y}_T$, we have
\begin{equation}\label{1++}
C\|u\|_{L^2\left(I_T,H^1(\Omega )\right)} \le \delta ^s\mathcal{N}(u) + \mathfrak{e}_3\left(c\delta^{-14}\right)\left(\mathcal{D}(u)+\mathfrak{D}_{\overline{\delta}}(u) \right),
\end{equation}
where $\overline{\delta}=\frac{\sqrt{8}\, \overline{\rho}}{T}$.
\end{theorem}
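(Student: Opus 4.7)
Following the plan announced in the abstract, I would reduce the hyperbolic Cauchy problem for $P$ to an elliptic Cauchy problem via a reduced Fourier-Bros-Iagolnitzer (FBI) transform, apply a known logarithmic stability estimate for the elliptic Cauchy problem, and then invert the FBI transform while controlling the residual terms by $\mathfrak{D}_{\overline{\delta}}(u)$ and $\mathcal{D}(u)$.

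\textbf{Step 1: the FBI reduction.} Pick a cutoff $\chi \in C_c^\infty(I_T)$ with $\chi \equiv 1$ on $I_{(1-\delta)T}$ and derivative bounds $\|\chi^{(k)}\|_\infty \lesssim (\delta T)^{-k}$. For $\tau > 0$ and $s$ in a suitable interval $I_{\rho}$ with $\rho < \overline{\rho}$, define
\[
U_\tau (x,s) = \int_{\mathbb{R}} e^{-\tau (is - t)^2/2}\, \chi(t)\, u(x,t)\, dt .
\]
Since $\partial_t^2 e^{-\tau(is-t)^2/2} = -\partial_s^2 e^{-\tau(is-t)^2/2}$, integrating by parts twice in $t$ yields
\[
\mathrm{div}\bigl(A(x)\nabla U_\tau\bigr)(x,s) + \partial_s^2 U_\tau(x,s) = \mathcal{P}_\tau u(x,s) + \mathcal{R}_\tau u(x,s),
\]
where $\mathcal{P}_\tau u$ is the FBI transform of $\chi\, Pu$ and $\mathcal{R}_\tau u$ collects the commutator terms involving $\chi'$ and $\chi''$, supported in $\Omega\times J_\delta$. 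Gaussian decay of the kernel in $t-s$ gives, for $|s|\le \rho$, estimates of the form $\|\mathcal{P}_\tau u\|_{L^2(\Omega\times I_\rho)} \lesssim \tau^{-1/4}\mathcal{D}(u)$ and $\|\mathcal{R}_\tau u\|_{L^2(\Omega\times I_\rho)} \lesssim \tau^{-1/4}e^{-c\tau \delta^2 T^2}\mathfrak{D}_{\overline{\delta}}(u)$, provided $\sqrt{8}\,\overline{\rho}<\delta T$, which is the role of the condition $\delta\in[\sqrt{8}\,\underline{\rho}/T,\sqrt{8}\,\overline{\rho}/T]$.

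\textbf{Step 2: elliptic Cauchy stability for $U_\tau$.} The equation above is elliptic in $(x,s)$ with coefficients having the same regularity as $A$. The traces $U_\tau(\cdot,s)_{|\Gamma}$ and $\partial_\nu U_\tau(\cdot,s)_{|\Gamma}$ are controlled by the $\mathfrak{D}_{\overline{\delta}}(u)$-norms of the Cauchy data of $u$ on $\Sigma_T$, up to polynomial factors in $\tau$ coming from the Gaussian kernel. Applying the logarithmic Cauchy stability for elliptic equations with Lipschitz coefficients (as recalled earlier in the paper) on the cylinder $\Omega\times I_\rho$ produces an estimate
\[
\|U_\tau\|_{L^2(\Omega\times I_{\rho'})} \le C\, \frac{\mathcal{N}(u) + \mathcal{D}(u)}{\bigl|\log\bigl(\tau^N(\mathfrak{D}_{\overline{\delta}}(u)+\mathcal{D}(u))\bigr)\bigr|^{\mu}}
\]
for some $\mu>0$, $N\ge 1$ and $\rho'<\rho$.

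\textbf{Step 3: FBI inversion and optimisation in $\tau$.} A Plancherel-type identity for the reduced FBI transform allows one to reconstruct $\|\chi u\|_{L^2(I_T,H^1(\Omega))}^2$ from an integral in $\tau$ of $H^1$-norms of $U_\tau$. Splitting this integral at a threshold $\tau_\star$, the part $\tau \le \tau_\star$ is bounded by the elliptic estimate of Step~2, while the part $\tau > \tau_\star$ is bounded by $\tau_\star^{-s}\mathcal{N}(u)^2$ thanks to the interpolation inequality $\|U_\tau\|_{H^1} \lesssim \tau^{-\beta}\mathcal{N}(u)$ for some $\beta>0$ (this is where the H\"older regularity encoded in $\mathcal{X}_\alpha$ enters). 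Choosing $\tau_\star$ as an inverse power of $\delta$ balances the two contributions and produces an estimate of the form
\[
\|\chi u\|_{L^2(I_T,H^1(\Omega))} \le \delta^s \mathcal{N}(u) + \mathfrak{e}_3(c\delta^{-14})(\mathcal{D}(u)+\mathfrak{D}_{\overline{\delta}}(u)) .
\]
The contribution of $(1-\chi)u$, supported in $R_\delta$, is absorbed in $\mathfrak{D}_{\overline{\delta}}(u)$ after an energy inequality using $Pu$.

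\textbf{Main obstacle.} The delicate point is Step~1--2: the residual $\mathcal{R}_\tau u$ must be estimated in an elliptic Cauchy norm (including $H^{1/2}$-type traces on $\partial\Omega\times I_\rho$), while the $\chi^{(k)}$ commutators generate time derivatives of $u$ living in $R_\delta$; hence the very specific trace norms defining $\mathfrak{D}_{\overline{\delta}}(u)$, in particular the $H^3(J_{\delta/2},H^{3/2}(\partial\Omega))$ and $H^1(J_{\delta/2},L^2(\partial\Omega))$ pieces. Keeping track of the explicit dependence on $\tau$ and $\delta$ through the double-exponential Carleman/three-spheres machinery, and then inverting $\tau \mapsto U_\tau$, is what produces the triple exponential $\mathfrak{e}_3(c\delta^{-14})$ and is where the bookkeeping is heaviest.
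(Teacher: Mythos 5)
Your overall framework---FBI reduction, elliptic Cauchy stability, inversion with a balancing argument---is the right skeleton, but two essential pieces of the proof are missing or incorrect, and these are precisely the ones that determine the triple-exponential loss and justify the geometric hypothesis $T_0>\mathfrak{d}$.

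First, there is no treatment of the data at the time end points $\pm T$. After the FBI step and the energy estimate that absorbs the residuals supported in $R_\delta$, one is left with terms of the form $\|\nabla\partial_t^k u(\cdot,\pm T)\|_{L^2(\Omega)}$ ($k=0,1$), which are \emph{not} part of $\mathfrak{D}_{\overline{\delta}}(u)$. The paper eliminates them with Proposition \ref{proposition3.2}, which is an observability/quantitative unique-continuation estimate for the wave operator on the time slab $J_{\overline{\delta}/2}$, obtained from \cite{LL} (or \cite{Ro95}) after a lifting via the harmonic extension operator $E$. This is where $T_0>\mathfrak{d}$ and $\sqrt{8}\,\overline{\rho}>\mathfrak{d}$ enter, and it is the optimization $\mathfrak{e}_2(c\delta^{-14})\mu^{-1/4}=\delta^s$ against the observability factor $e^{c\mu}$ that generates the third exponential. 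Attributing $\mathfrak{e}_3$ to the Carleman/three-spheres bookkeeping alone is not accurate: that only gives the $\mathfrak{e}_2$ of Theorem \ref{theorem+3.1}.

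Second, your Step 3 cannot work as written. The claimed decay $\|U_\tau\|_{H^1}\lesssim\tau^{-\beta}\mathcal{N}(u)$ is false: as $\tau\to\infty$ the Gaussian kernel concentrates around $t=t_0$ and $U_\tau(\cdot,0)\sim\sqrt{2\pi/\tau}\,u(\cdot,t_0)$, so there is only $O(\tau^{-1/2})$ decay and certainly no gain that could produce $\delta^s$. The paper does not invert via a Plancherel identity in the FBI parameter. Instead, Lemma \ref{lemma3.4} recovers a single time slice $f(\cdot,t_0)$ from $\mathscr{F}_{\lambda,t_0}f$ at one fixed $\lambda$ (chosen of order $\delta^{-16}$), with a residual $\delta^{-3/2}\lambda^{-1/4}\|f\|_{H^1(I_T,H^1(\Omega))}$; one then integrates over $t_0\in I_{(1-\delta)T}$ and handles the remaining time band via Lemma \ref{lemmaH}. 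In that scheme the $\delta^s\mathcal{N}(u)$ term comes from the $\lambda^{-1/4}$ and $\epsilon^\beta$ balances, not from any $\tau$-decay of the transform itself. (As a minor point, the relevant inequality is $\delta T\le\sqrt{8}\,\overline{\rho}$, not $\sqrt{8}\,\overline{\rho}<\delta T$; the role of the range $[\sqrt{8}\,\underline{\rho}/T,\sqrt{8}\,\overline{\rho}/T]$ is to keep the elliptic cylinder $Q_{\delta T/\sqrt{8}}=\Omega\times(-\delta T/\sqrt 8,\delta T/\sqrt 8)$ in a fixed compact family $[\underline{\rho},\overline{\rho}]$ so that the elliptic Cauchy constants of Theorem \ref{theorem-el2} are uniform.)
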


Theorem \ref{maintheorem} is to our knowledge the first stability result of a Cauchy problem for anisotropic wave equations. The presence of the  term $\mathfrak{D}_{\overline{\delta}}(u)$ in \eqref{1++} can be explained by the fact that the preceding uniqueness of continuation from Cauchy data does not hold in the whole time interval. In counterpart our stability inequality is global.

Quantifying the uniqueness of continuation from Cauchy data, as it is stated above, is a challenging issue.

We point out that a global logarithmic stability result of the Cauchy problem for an anisotropic heat equation was recently proved by second author and M. Yamamoto \cite{CY}. In that paper the problem is solved directly.

For both the heat and the wave equations, one of the main difficulties comes from the fact that the data at the end points of the time interval is not known, and therefore must be controlled by the Cauchy data. Another difficulty that arises in the case of the wave equation is induced by the residual terms related to the Fourrier-Bros-Iagolnitzer transform. These residual terms have to be controlled again by the Cauchy data.

The situation is very different in the case of anisotropic elliptic equations for which we have global (single) logarithmic stability result. This result is the best possible that can be obtained in the general case. Indeed, we know since Hadamard that the Cauchy problem for elliptic equations is ill-posed in the sense that there is no hope to get a Lipschitz (or even H\"older) stability estimate. Precisely, Hadamard \cite[page 33]{Ha} gave an example in which the stability is exactly logarithmic. Some references and comments concerning the elliptic Cauchy problem are provided in Section \ref{elliptic}.

It is worth mentioning that, under an additional ``smallness'' condition of the coefficient of the matrix $A$, we can improve Theorem \ref{maintheorem} (see Theorem \ref{theorem3.1} for more details).

\subsection{Outline.}

The rest of this text is organized as follows. Section 2 is mainly devoted to stability results of the Cauchy problem for anisotropic elliptic equations in a form adapted to our approach for studying Cauchy problems for wave equations. Precisely, we need a stability result for the elliptic Cauchy problem which is uniform with respect to one parameter family of domains. 
 
In Section 3, we give some results on the reduced Fourrier-Bros-Iagolnitzer transform that are necessary to convert the stability result of the Cauchy problem for elliptic equations to a stability result of a Cauchy problem for wave equations.  We prove Theorem \ref{maintheorem} in Section 4. We give in Section 5 comments concerning a variant of Theorem \ref{maintheorem}. We discuss in  Section 6 an improvement of the general stability result when we impose a ``smallness'' condition on the coefficients of $A$.

For sake of completness we added in Appendix \ref{app1} a self-contained proof of a stability result of an elliptic Cauchy problem. Our analysis relies essentially on three-ball inequalities, for both solutions and their gradient, that are obtained using a Carleman inequality proved in \cite{Ch2016}. We take this opportunity to improve and extend the results obtained by the second author in \cite{Ch2016}.

\section{Elliptic Cauchy problem for one parameter family of domains}\label{elliptic}

Let $D$ be a Lipschitz bounded domain of $\mathbb{R}^d$, $d\ge 2$,  and consider the elliptic operator $L$ acting on $D$ as follows 
\[
Lu=\mbox{div}(A\nabla u),
\]
where $A=(a^{ij})$ is a symmetric matrix with  coefficients in $W^{1,\infty}(D)$ satisfying
\begin{equation}\label{el1}
\kappa ^{-1}|\xi |^2 \le A(x)\xi \cdot \xi \le \kappa |\xi |^2,\;\;  x\in D , \; \xi \in \mathbb{R}^d,
\end{equation}
and
\begin{equation}\label{el2}
\sum_{k=1}^d\left|\sum_{i,j=1}^d\partial_k a^{ij}(x)\xi _i\xi_j\right| \le \varkappa |\xi|^2,\quad x\in D ,\; \xi \in \mathbb{R}^d,
\end{equation}
with some constants $\kappa \ge 1$ and $\varkappa>0$.

\begin{theorem}\label{theorem-el1}
Let $\mathcal{C}$ be a nonempty open subset of $\partial D$ and $0<\alpha \le 1$. There exist $C>0$, $c >0$ and $\beta >0$, only depending on $D$, $\kappa$, $\varkappa$, $\alpha$ and $\mathcal{C}$, so that, for any $u\in C^{1,\alpha}(\overline{D} )$ satisfying $Lu\in L^2(D)$ and $0<\epsilon <1$,  we have
\begin{equation}\label{el3}
C\|u\|_{H^1(D)}\le \epsilon ^\beta \|u\|_{C^{1,\alpha}(\overline{D} )}+e^{c/\epsilon}\left(\|u\|_{L^2(\mathcal{C})}+\|\nabla u\|_{L^2(\mathcal{C})}+\|Lu\|_{L^2(D)}\right).
\end{equation}
\end{theorem}

This theorem was already proved in \cite{Ch2016} under an additional geometric condition on the domain. We provide a detailed proof of Theorem \ref{theorem-el1} in Appendix \ref{app1}. There is a substantial literature on elliptic Cauchy problems in different forms with various assumptions. We just quote here the following few recent references \cite{ABRV, ARRV,Bo,BD,Ch2016, Ch2020,Ph} (see also \cite{Boul} for non stationary Stokes problem and \cite{Bo2,CY} for the parabolic case). 

In order to tackle the Cauchy problem for the wave equation with need a variant of Theorem \ref{theorem-el1} with uniform constants for a family of domains of the form $Q_\rho=\Omega \times (-\rho,\rho)$, where $\rho >0$  is real and $\Omega$ is a bounded Lipschitz domain $\mathbb{R}^{n-1}$, $n\ge 2$.

Fix $0<\underline{\rho}<\overline{\rho}$ and assume that \eqref{el1} and \eqref{el2} hold with $D=Q_{\overline{\rho}}$. 

We define $a^{ij}_\rho$, where $\rho \in [\underline{\rho},\overline{\rho}]$, as follows
\begin{align*}
&a^{ij}_\rho (x',x_n)=a_{ij}(x',\rho x_n),\quad (x',x_n)\in Q_1,\; 1\le i,j\le n-1,
\\
&a^{in}_\rho (x',x_n)=\frac{1}{\rho}a_{in}(x',\rho x_n),\quad (x',x_n)\in Q_1,\; 1\le i\le n-1,
\\
&a^{nn}_\rho (x',x_n)=\frac{1}{\rho^2}a_{nn}(x',\rho x_n),\quad (x',x_n)\in Q_1.
\end{align*}

Set $A_\rho =(a^{ij}_\rho)$. In light of \eqref{el1} and \eqref{el2} for $D=Q_{\overline{\rho}}$, we have
\begin{equation}\label{el4}
\kappa ^{-1}\min(1,\overline{\rho}^{-2})|\xi |^2 \le A_\rho(x)\xi \cdot \xi \le \kappa\max(1,\underline{\rho}^{-2}) |\xi |^2,\;\;  x\in Q_1 , \; \xi \in \mathbb{R}^n,
\end{equation}
and
\begin{equation}\label{el5}
\sum_{k=1}^n\left|\sum_{i,j=1}^n\partial_k a_\rho^{ij}(x)\xi _i\xi_j\right| \le \varkappa \max(1, \underline{\rho}^{-2})|\xi|^2,\quad x\in Q_1 ,\; \xi \in \mathbb{R}^n.
\end{equation}

The proof of \eqref{el5} is obvious, while \eqref{el4} follows readily by noting that
\[
A_\rho(x)\xi \cdot \xi= A(x',\rho x_n)(\xi ',\rho^{-1}\xi_n)\cdot (\xi ',\rho^{-1}\xi_n).
\]

Define $L_\rho$ the elliptic operator acting on $Q_\rho$ as follows 
\[
L_\rho v=\mbox{div}(A_\rho \nabla v).
\]

Let $u\in C^\infty(\overline{Q}_\rho)$ and set $v(x',x_n)=u(x',\rho x_n)$, $(x',x_n)\in Q_1$. Then straightforward computations yield
\begin{align}
&\|v\|_{L^2(Q_1)}=\rho^{-1/2}\|u\|_{L^2(Q_\rho)},\label{el6}
\\
&\|\nabla v\|_{L^2(Q_1)}\ge \rho^{-1/2}\min(1,\rho)\|\nabla u\|_{L^2(Q_\rho)},\label{el7}
\\
&\|L_\rho v\|_{L^2(Q_1)}=\rho^{-1/2}\|Lu\|_{L^2(Q_\rho)}\label{el8}
\end{align}

Similarly, if $\Gamma$ is a nonempty open subset of $\partial \Omega$ and $\Sigma_\rho=\Gamma \times (-\rho,\rho)$ then we have
\begin{align}
&\|v\|_{L^2(\Sigma_1)}=\rho^{-1/2}\|u\|_{L^2(\Sigma_\rho)},\label{el9}
\\
&\|\nabla v\|_{L^2(\Sigma_1)}\le \rho^{-1/2}\max(1,\rho)\|\nabla u\|_{L^2(\Sigma_\rho)}\label{el10}.
\end{align}

In light of identities and inequalities \eqref{el6} to \eqref{el10}, we get by applying Theorem \ref{theorem-el1}

\begin{theorem}\label{theorem-el2}
Let $\Gamma$ be a nonempty open subset of $\partial \Omega$ and $0<\alpha \le 1$. Fix $0<\underline{\rho}<\overline{\rho}$. Then there exist $C>0$, $c >0$ and $\beta >0$, only depending on $D$, $\kappa$, $\varkappa$, $\alpha$, $\Gamma$, $\underline{\rho}$ and $\overline{\rho}$ so that,  for any $\rho \in [\underline{\rho},\overline{\rho}]$ and $u\in C^{1,\alpha}(\overline{Q}_\rho )$ satisfying $Lu\in L^2(Q_\rho)$,  and $0<\epsilon <1$,  we have
\begin{align}
C\|u\|_{H^1(Q_\rho)}\le \epsilon ^\beta \|&u\|_{C^{1,\alpha}(\overline{Q}_\rho )}\label{el3}
\\ 
&+e^{c/\epsilon}\left(\|u\|_{L^2(\Sigma_\rho)}+\|\nabla u\|_{L^2(\Sigma_\rho)}+\|Lu\|_{L^2(Q_\rho)}\right).\nonumber
\end{align}
\end{theorem}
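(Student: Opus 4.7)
The plan is to reduce Theorem \ref{theorem-el2} to Theorem \ref{theorem-el1} applied on the fixed reference cylinder $D=Q_1$ with $\mathcal{C}=\Sigma_1$, by means of the anisotropic dilation $v(x',x_n)=u(x',\rho x_n)$ that was already prepared in the paragraphs preceding the statement. The whole point of introducing the scaled matrix $A_\rho$ and the associated operator $L_\rho$ is precisely that this change of variables turns the $\rho$-dependent elliptic problem on $Q_\rho$ into a $\rho$-dependent family of elliptic problems on the fixed domain $Q_1$ with uniformly bounded structural constants.

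First I would note that, for $u\in C^{1,\alpha}(\overline{Q}_\rho)$ and $v(x',x_n)=u(x',\rho x_n)$, a direct chain-rule computation gives $L_\rho v(x',x_n)=(Lu)(x',\rho x_n)$, which is what motivates the very definition of $a^{ij}_\rho$. Combined with the bounds \eqref{el4} and \eqref{el5}, this shows that $L_\rho$ is a uniformly elliptic operator on $Q_1$ with Lipschitz coefficients, the ellipticity constants $\tilde{\kappa}=\kappa\max(1,\overline{\rho}^2,\underline{\rho}^{-2})$ and $\tilde{\varkappa}=\varkappa \max(1,\underline{\rho}^{-2})$ depending only on $\kappa,\varkappa,\underline{\rho},\overline{\rho}$. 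Applying Theorem \ref{theorem-el1} with $D=Q_1$, $\mathcal{C}=\Sigma_1$, and operator $L_\rho$ therefore produces constants $C,c,\beta$ that depend only on $\Omega$, $\Gamma$, $\kappa$, $\varkappa$, $\alpha$, $\underline{\rho}$, $\overline{\rho}$ (and not on the particular $\rho$ we are considering), yielding an estimate of the form
\[
C\|v\|_{H^1(Q_1)}\le \epsilon^\beta\|v\|_{C^{1,\alpha}(\overline{Q}_1)}+e^{c/\epsilon}\bigl(\|v\|_{L^2(\Sigma_1)}+\|\nabla v\|_{L^2(\Sigma_1)}+\|L_\rho v\|_{L^2(Q_1)}\bigr).
\]

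Next I would convert this inequality back to one for $u$ on $Q_\rho$ by invoking the identities and inequalities \eqref{el6}--\eqref{el10}: the left-hand side is bounded below by a constant (depending on $\underline{\rho},\overline{\rho}$) times $\|u\|_{H^1(Q_\rho)}$ using \eqref{el6} and \eqref{el7}, while the three data norms appearing on the right are controlled, via \eqref{el8}--\eqref{el10} and \eqref{el6}, by the corresponding norms of $u$ on $\Sigma_\rho$ and $Q_\rho$ up to a uniform multiplicative factor. The only remaining piece is the $C^{1,\alpha}$ term, and here one checks directly from the definition that $\|v\|_{C^{1,\alpha}(\overline{Q}_1)}\le M\|u\|_{C^{1,\alpha}(\overline{Q}_\rho)}$ with $M$ depending only on $\overline{\rho}$ (the dilation factor $\rho$ appears with non-negative powers on derivatives, and the H\"older seminorm in $x_n$ scales by $\rho^\alpha\le \overline{\rho}^\alpha$). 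Collecting these estimates and absorbing all the uniform constants into $C$ and the prefactors yields \eqref{el3}.

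I do not expect any serious conceptual obstacle here, since Theorem \ref{theorem-el1} does all the heavy lifting and the scaling is explicit. The only point requiring genuine care is the \emph{uniformity in $\rho$} of the constants produced by Theorem \ref{theorem-el1}: one has to be sure that the proof of Theorem \ref{theorem-el1}, as stated, depends on the coefficients $a^{ij}$ only through the two constants $\kappa,\varkappa$, and not through their individual moduli of continuity or higher-order information. Granting that, the $\rho$-dependence of everything in the argument is captured by the finite quantities $\max(1,\overline{\rho}^{\pm 2})$ and $\min(1,\underline{\rho})$, which are harmless on the compact interval $[\underline{\rho},\overline{\rho}]$. Thus the whole reduction amounts to careful bookkeeping of constants in the change of variables.
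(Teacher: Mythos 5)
Your proposal is correct and follows essentially the same route the paper takes: the paper sets up the rescaled coefficients $A_\rho$, the operator $L_\rho$, and the scaling identities \eqref{el6}--\eqref{el10} precisely so that Theorem \ref{theorem-el1} can be applied on the fixed cylinder $Q_1$ and then transferred back to $Q_\rho$, which is exactly your argument. Your additional remarks on the $C^{1,\alpha}$-norm scaling and on the uniformity in $\rho$ of the constants in Theorem \ref{theorem-el1} (which indeed depend on the coefficients only through $\kappa$ and $\varkappa$) are correct and supply details the paper leaves implicit.
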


\section{Fourier-Bros-Iagolnitzer transform}

The Fourier-Bros-Iagolnitzer transform is a useful tool to transfer results from an elliptic equation to a wave equation. This transform is well known in control theory community (see for instance \cite{LR,Ro91,Ro95}). In the present section we demonstrate specific results on this transform that are necessary to establish our stability results of Cauchy problems for waves equations.

Fix $0<\delta <1$ and  $t_0\in I_{(1-\delta)T}$, and pick $\chi \in C^\infty _0(I_T)$ so that $0\le \chi \le 1$, $\chi =1$ in $I_{(1-\delta/2)T}$, and
\[
|\chi '| \le \varpi (\delta T)^{-1},\quad |\chi ''| \le \varpi (\delta T)^{-2},
\]
for some universal constant $\varpi$.

Define, for $(x,\tau )\in Q_{\delta T/\sqrt{8}}$ (resp. $(x,\tau )\in \Sigma_{\delta T/\sqrt{8}}$) and $\lambda >0$, the reduced Fourier-Bros-Iagolnitzer transform as follows
\begin{align*}
[\mathscr{F}_{\lambda ,t_0} f](x,\tau )&=\int_{\mathbb{R}}e^{-\lambda(i\tau -(t-t_0))^2/2}\chi (t)f(x,t)dt
\\
&=e^{\lambda \tau ^2/2}\int_{\mathbb{R}}e^{-\lambda [(t-t_0)^2-2i\tau (t-t_0)]/2}\chi (t)f(x,t)dt.
\end{align*}

\begin{lemma}\label{lemma3.1}
$\mathrm (i)$ If $f\in C_0^\infty (Q_T)$ then
\begin{equation}\label{e1}
\|\mathscr{F}_{\lambda ,t_0} f\|_{L^2(Q_{\delta T/\sqrt{8}})}\le 2^{1/4}Te^{\lambda (\delta T)^2/16}\|f\|_{L^2(Q_T)}.
\end{equation}
Thus, $\mathscr{F}_{\lambda ,t_0}$ is extended as a bounded operator from $L^2(Q_T)$ into $L^2(Q_{\delta T/\sqrt{8}})$.
\\
$\mathrm (ii)$ Let $f\in C_0^\infty (\overline{\Sigma}_T)$. Then
\begin{equation}\label{e1+}
\|\mathscr{F}_{\lambda ,t_0} f\|_{L^2(\Sigma_{\delta T/\sqrt{8}})}\le 2^{1/4}Te^{\lambda (\delta T)^2/16}\|f\|_{L^2(\Sigma_T)}.
\end{equation}
Therefore, $\mathscr{F}_{\lambda ,t_0}$ is extended as a bounded operator from $L^2(\Sigma_T)$ into $L^2(\Sigma_{\delta T/\sqrt{8}})$.
\end{lemma}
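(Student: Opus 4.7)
The plan is to estimate the modulus of the FBI kernel pointwise, apply Cauchy--Schwarz in the $t$-variable, and then integrate the residual $\tau$-weight over the thin slab $I_{\delta T/\sqrt{8}}$. The key algebraic identity is
\[
-\frac{\lambda}{2}\bigl(i\tau-(t-t_0)\bigr)^2=\frac{\lambda\tau^2}{2}+i\lambda\tau(t-t_0)-\frac{\lambda(t-t_0)^2}{2},
\]
so that $\bigl|e^{-\lambda(i\tau-(t-t_0))^2/2}\bigr|=e^{\lambda\tau^2/2}e^{-\lambda(t-t_0)^2/2}$. On the target slab $|\tau|<\delta T/\sqrt{8}$ the growing Gaussian satisfies $e^{\lambda\tau^2/2}\le e^{\lambda(\delta T)^2/16}$, which is exactly the factor appearing in \eqref{e1}; the decaying Gaussian $e^{-\lambda(t-t_0)^2/2}\le 1$ will simply be discarded.

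For part (i), using that $\chi$ is supported in $I_T$ with $0\le\chi\le 1$, Cauchy--Schwarz in $t$ yields
\[
\bigl|[\mathscr{F}_{\lambda,t_0}f](x,\tau)\bigr|^2\le e^{\lambda\tau^2}\Bigl(\int_{-T}^{T}e^{-\lambda(t-t_0)^2}\chi(t)^2\,dt\Bigr)\int_{-T}^{T}|f(x,t)|^2\,dt\le 2Te^{\lambda\tau^2}\int_{-T}^{T}|f(x,t)|^2\,dt.
\]
Integrating in $x\in\Omega$ and applying Fubini produces a factor $\|f\|_{L^2(Q_T)}^2$ and leaves the one-dimensional integral $\int_{-\delta T/\sqrt 8}^{\delta T/\sqrt 8}e^{\lambda\tau^2}\,d\tau\le (2\delta T/\sqrt{8})e^{\lambda(\delta T)^2/8}$, so that
\[
\|\mathscr{F}_{\lambda,t_0}f\|_{L^2(Q_{\delta T/\sqrt 8})}^2\le \sqrt{2}\,\delta T^2 e^{\lambda(\delta T)^2/8}\|f\|_{L^2(Q_T)}^2.
\]
Taking square roots and using $\delta<1$ gives the constant $2^{1/4}T$ in \eqref{e1}. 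The extension to $L^2(Q_T)$ is then immediate from the density of $C_0^\infty(Q_T)$.

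For part (ii) the argument is literally the same, with $\Omega$ replaced by $\Gamma$ in the spatial integration; the kernel does not depend on $x$, so the estimate on $\Sigma_{\delta T/\sqrt{8}}=\Gamma\times I_{\delta T/\sqrt{8}}$ is obtained verbatim and yields \eqref{e1+}. There is no genuine analytic obstacle here: the only thing to watch is the bookkeeping so that the constant coming out of Cauchy--Schwarz and the $\tau$-integration matches the advertised $2^{1/4}T e^{\lambda(\delta T)^2/16}$, which forces discarding $\sqrt{\delta}\le 1$ at the end.
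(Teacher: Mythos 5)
Your proof is correct and follows essentially the same route as the paper: bound the kernel's modulus by $e^{\lambda\tau^2/2}e^{-\lambda(t-t_0)^2/2}$, discard the decaying Gaussian and apply Cauchy--Schwarz in $t$ to get the factor $2T$, integrate $e^{\lambda\tau^2}$ over the slab $|\tau|<\delta T/\sqrt 8$, and finally absorb $\sqrt\delta\le 1$. The only cosmetic difference is that you keep the decaying Gaussian inside the Cauchy--Schwarz step and drop it afterward, whereas the paper drops it first; both give the same constant.
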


\begin{proof}
$\mathrm (i)$ For $f\in C_0^\infty (Q_T)$, we have
\[
|[\mathscr{F}_{\lambda ,t_0} f](x,\tau )|\le e^{\lambda \tau ^2/2}\int_{\mathbb{R}}e^{-\lambda(t-t_0)^2/2}\chi (t)|f(x,t)|dt.
\]
As
\[
\int_{\mathbb{R}}e^{-\lambda(t-t_0)^2/2}\chi (t)|f(x,t)|dt\le \int_{-T}^{T}e^{-\lambda(t-t_0)^2/2}|f(x,t)|dt\le \int_{-T}^T|f(x,t)|dt,
\]
we get by applying Cauchy-Schwarz's inequality
\[
|[\mathscr{F}_{\lambda ,t_0} f](x,\tau )|^2\le 2Te^{\lambda \tau ^2}\int_{-T}^T|f(x,t)|^2dt.
\]
Whence
\[
\|\mathscr{F}_{\lambda ,t_0} f\|_{L^2(Q_{\delta T/\sqrt{8}})}\le \frac{\sqrt{\delta T}}{2^{1/4}}\sqrt{2T}e^{\lambda (\delta T)^2/16}\|f\|_{L^2(Q_T)}
\]
as expected.
\\
$\mathrm (ii)$ The proof is quite similar to that of (i).
\end{proof}

\begin{lemma}\label{lemma3.2}
$\mathrm (i)$ Let $f\in H^1(I_T,L^2(\Omega ))$, we have
\begin{equation}\label{e2+}
\partial _\tau \mathscr{F}_{\lambda ,t_0} f=i\mathscr{F}_{\lambda ,t_0} (\partial _tf)+k_{\lambda ,t_0} ,
\end{equation}
the function $k_{\lambda ,t_0} \in L^2(Q_{\delta T/\sqrt{8}})$ satisfies
\begin{equation}\label{e3+}
\|k_{\lambda ,t_0} \|_{L^2(Q_{\delta T/\sqrt{8}})}\le  \varpi   e^{-\lambda (\delta T)^2/16}\|f\|_{L^2(R_{\delta/2})}.
\end{equation}
$\mathrm (ii)$ For $f\in H^2(I_T,L^2(\Omega ))$, we have
\begin{equation}\label{e2}
\partial _\tau ^2\mathscr{F}_{\lambda ,t_0} f=-\mathscr{F}_{\lambda ,t_0} (\partial _t^2f)+g_{\lambda ,t_0} ,
\end{equation}
the function $g_{\lambda ,t_0} \in L^2(Q_{\delta T/\sqrt{8}})$ satisfies
\begin{equation}\label{e3}
\|g_{\lambda ,t_0} \|_{L^2(Q_{\delta T/\sqrt{8}})}\le 2\varpi (2T+1)(\delta T)^{-1}  e^{-\lambda (\delta T)^2/16}\|f\|_{H^1(J_{\delta /2},L^2(\Omega))}.
\end{equation}
\end{lemma}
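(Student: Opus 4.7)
The plan is to prove (i) and (ii) by a single mechanism: differentiate under the integral, trade $\partial_\tau$ for $\partial_t$ on the Gaussian kernel using a Cauchy–Riemann-type identity, and then integrate by parts in $t$. Specifically, write $K(\tau,t)=e^{-\lambda(i\tau-(t-t_0))^2/2}$ and observe the pointwise identity
\[
\partial_\tau K=-i\,\partial_t K,
\]
which follows from a direct computation. Iterating, $\partial_\tau^2K=-\partial_t^2K$. Since $\chi$ is compactly supported in $I_T$, all integrations by parts in $t$ produce no boundary contribution.

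For (i), this gives
\[
\partial_\tau[\mathscr{F}_{\lambda,t_0}f](x,\tau)=-i\int_{\mathbb{R}}(\partial_tK)\chi(t)f(x,t)\,dt
=i\int_{\mathbb{R}}K\,\partial_t(\chi f)\,dt=i\,\mathscr{F}_{\lambda,t_0}(\partial_tf)+k_{\lambda,t_0},
\]
with
\[
k_{\lambda,t_0}(x,\tau)=i\int_{\mathbb{R}}K(\tau,t)\,\chi'(t)f(x,t)\,dt.
\]
For (ii), two integrations by parts produce
\[
g_{\lambda,t_0}(x,\tau)=-\int_{\mathbb{R}}K(\tau,t)\bigl[\chi''(t)f(x,t)+2\chi'(t)\partial_tf(x,t)\bigr]dt.
\]

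The residual estimates then come from a pointwise bound on $|K|$ on the relevant region. Since $|K(\tau,t)|=e^{\lambda\tau^2/2}e^{-\lambda(t-t_0)^2/2}$, and since $\chi'$, $\chi''$ vanish off $J_{\delta/2}$, on which $|t|>(1-\delta/2)T$ while $|t_0|<(1-\delta)T$, I get $|t-t_0|>\delta T/2$; coupled with $|\tau|<\delta T/\sqrt{8}$ on $Q_{\delta T/\sqrt{8}}$, this yields
\[
|K(\tau,t)|\le e^{\lambda(\delta T)^2/16}e^{-\lambda(\delta T)^2/8}=e^{-\lambda(\delta T)^2/16},\qquad (x,\tau)\in Q_{\delta T/\sqrt 8},\; t\in\mathrm{supp}(\chi')\cup\mathrm{supp}(\chi'').
\]
Combining this Gaussian decay with the hypotheses $|\chi'|\le\varpi(\delta T)^{-1}$ and $|\chi''|\le\varpi(\delta T)^{-2}$, a Cauchy–Schwarz estimate in $t$ over $J_{\delta/2}$ (whose length is $\delta T$) and integration in $(x,\tau)$ over $Q_{\delta T/\sqrt{8}}$ produce \eqref{e3+} and \eqref{e3}, after absorbing harmless factors of $T$ and $\delta$ into the stated constants.

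The only slightly delicate point — and the main obstacle — is the balance of exponents: the factor $e^{\lambda\tau^2/2}$ in $|K|$ grows with $\lambda$ and is what forces the restriction $|\tau|<\delta T/\sqrt 8$, chosen precisely so that it is absorbed by half of the Gaussian decay $e^{-\lambda(t-t_0)^2/2}$ on the support of $\chi',\chi''$, leaving the net decay $e^{-\lambda(\delta T)^2/16}$. Everything else is routine: the pass from $C_0^\infty$ to the Sobolev regularity asserted in (i) and (ii) is done by density, the identities extending continuously thanks to the bounds from Lemma~\ref{lemma3.1} applied to $f$, $\partial_tf$, $\partial_t^2f$.
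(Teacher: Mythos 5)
Your proof is correct and takes essentially the same route as the paper: the Cauchy--Riemann-type identity $\partial_\tau K=-i\partial_t K$, integration by parts against the compactly supported $\chi$, the explicit expressions for $k_{\lambda,t_0}$ and $g_{\lambda,t_0}$, and the Gaussian decay on $\operatorname{supp}\chi'\cup\operatorname{supp}\chi''$ forced by $|t-t_0|>\delta T/2$ together with $|\tau|<\delta T/\sqrt 8$ all match the paper's argument (the paper keeps the factor $e^{\lambda\tau^2/2}$ explicit and only converts it via the $\tau$-integral at the end, a cosmetic difference). The final step, ``absorbing harmless factors of $T$ and $\delta$,'' is waved through a bit quickly given that \eqref{e3+} and \eqref{e3} have explicit constants, but carrying out the Cauchy--Schwarz bookkeeping does yield the stated bounds (one picks up $2^{-1/4}\varpi\le\varpi$ in (i) and $2^{1/4}\varpi(2T+1)(\delta T)^{-1}\le 2\varpi(2T+1)(\delta T)^{-1}$ in (ii)), so there is no genuine gap.
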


\begin{proof}
We prove $\mathrm (ii)$. The proof of $\mathrm (i)$ can be easily deduced from that of $\mathrm(ii) $.

By density, it is sufficient to prove \eqref{e2} and \eqref{e3} when $f\in C_0^\infty (Q_T)$. In light of the relation
\[
\partial _\tau e^{-\lambda(i\tau -(t-t_0))^2/2}=-i\partial _te^{-\lambda(i\tau -(t-t_0))^2/2},
\]
we get
\[
\partial _\tau [\mathscr{F}_{\lambda ,t_0} f](x,\tau )=-i\int_{\mathbb{R}}\partial_t e^{-\lambda(i\tau -(t-t_0))^2/2}\chi (t)f(x,t)dt.
\]
We obtain by making an integration by parts in the right hand side
\[
\partial _\tau [\mathscr{F}_{\lambda ,t_0} f](x,\tau )=i\int_{\mathbb{R}}e^{-\lambda(i\tau -(t-t_0))^2/2}\partial _t[\chi (t)f(x,t)]dt.
\]
An iteration of this formula then yields
\[
\partial _\tau ^2[\mathscr{F}_{\lambda ,t_0} f](x,\tau )=-\int_{\mathbb{R}}e^{-\lambda(i\tau -(t-t_0))^2/2}\partial _t^2[\chi (t)f(x,t)]dt.
\]
In consequence
\[
\partial _\tau ^2\mathscr{F}_{\lambda ,t_0} f=-\mathscr{F}_{\lambda ,t_0} (\partial _t^2f)+g_{\lambda ,t_0} ,
\]
where
\[
g_{\lambda ,t_0} (x,\tau )=-\int_{\mathbb{R}}e^{-\lambda(i\tau -(t-t_0))^2/2}[2\chi' (t)\partial _tf(x,t)+\chi ''(t)f(x,t)]dt.
\]
We have  similarly
\[
|g_{\lambda ,t_0} (x,\tau )|\le e^{\lambda \tau ^2/2}\int_{\mathbb{R}}e^{-\lambda(t-t_0)^2/2}|2\chi' (t)\partial _tf(x,t)+\chi ''(t)f(x,t)|dt.
\]
But 
\[
\mbox{supp}\,\chi ',\; \mbox{supp}\, \chi'' \subset J_{\delta T/2}
\]
and
\[
|t-t_0|\ge \delta T/2,\quad \mbox{if}\; t\in J_{\delta T/2}.
\]
Hence
\begin{equation}\label{2.1}
|g_{\lambda ,t_0} (x,\tau )|\le \varpi e^{\lambda \tau ^2/2}(2T+1)(\delta T)^{-2}e^{-\lambda (\delta T)^2/8}\int_{J_{\delta/2}}[|\partial _tf(x,t)|+|f(x,t)|]dt.
\end{equation}

We proceed as in the proof of Lemma \ref{lemma3.1} in order to obtain
\begin{align*}
\|g_{\lambda ,t_0}& \|_{L^2(Q_{\delta T/\sqrt{8}})}
\\
&\le \varpi (2T+1)(\delta T)^{-1}  e^{-\lambda (\delta T)^2/16}\left( \|f\|_{L^2(R_{\delta /2})}^2+\|\partial_t f\|_{L^2(R_{\delta/2)}}^2\right)^{1/2}.
\end{align*}
That is we have
\[
\|g_{\lambda ,t_0} \|_{L^2(Q_{\delta T/\sqrt{8}})}\le \varpi (2T+1)(\delta T)^{-1}  e^{-\lambda (\delta T)^2/16}\|f\|_{H^1(J_{\delta /2},L^2(\Omega))}.
\]
This is the expected inequality.
\end{proof}

\begin{lemma}\label{lemma3.3}
Let $\lambda _0>0$. For any $f\in C^{j,\alpha}(\overline{Q}_T)$, $j=0,1$, and $\lambda \ge \lambda_0$, we have
\begin{equation}\label{e4}
\|\mathscr{F}_{\lambda ,t_0} f\|_{C^{j,\alpha}\left(\overline{Q}_{\delta T/\sqrt{8}}\right)}\le C\lambda \delta^{-j}e^{\lambda (\delta T)^2/16}\|f\|_{C^{j,\alpha}(\overline{Q}_T)},
\end{equation}
where
\[
C= 3(1+\varpi)\max \left[\frac{1}{\lambda_0},\left(\frac{T}{2}\right)^{1-\alpha}\right].
\]
\end{lemma}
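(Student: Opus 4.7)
The strategy is to bound each component of the $C^{j,\alpha}(\overline{Q}_{\delta T/\sqrt{8}})$ norm separately, writing $F := \mathscr{F}_{\lambda,t_0}f$. The common ingredient is the pointwise identity
\[
|e^{-\lambda(i\tau-(t-t_0))^2/2}| = e^{\lambda\tau^2/2}\, e^{-\lambda(t-t_0)^2/2},
\]
together with $\tau^2\le (\delta T)^2/8$ on $I_{\delta T/\sqrt{8}}$ and the Gaussian integral $\int_{\mathbb{R}}e^{-\lambda(t-t_0)^2/2}\,dt = \sqrt{2\pi/\lambda}$.

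For $j=0$, I first bound $\|F\|_{L^\infty}$ by pulling $\|f\|_{L^\infty}$ out of the defining integral and evaluating the Gaussian, yielding $\|F\|_{L^\infty}\le \sqrt{2\pi/\lambda}\,e^{\lambda(\delta T)^2/16}\|f\|_{L^\infty}$. The spatial Hölder seminorm follows immediately from replacing $|f(x,t)|$ with $|f(x,t)-f(x',t)|\le [f]_\alpha|x-x'|^\alpha$ inside the integral, since the kernel does not depend on $x$. The temporal Hölder bound is the delicate one: differentiating under the integral gives
\[
\partial_\tau F(x,\tau) = -i\lambda \int_{\mathbb{R}}(i\tau-(t-t_0))\,e^{-\lambda(i\tau-(t-t_0))^2/2}\chi(t)f(x,t)\,dt,
\]
whose modulus I control by $e^{\lambda(\delta T)^2/16}\|f\|_{L^\infty}$ times an expression of order $\sqrt{\lambda}$ (the prefactor $\lambda$ being partially compensated by the Gaussian integral of $|i\tau-(t-t_0)|$). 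I then convert this Lipschitz-in-$\tau$ estimate to an $\alpha$-Hölder one via $|\tau-\tau'|^{1-\alpha}\le (T/2)^{1-\alpha}$, and absorb the residual $\sqrt{\lambda}$ into $\lambda$ using $\sqrt{\lambda}\le \lambda/\sqrt{\lambda_0}$ for $\lambda\ge \lambda_0$. This is precisely what produces the $\max[1/\lambda_0,(T/2)^{1-\alpha}]$ structure in the constant $C$.

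For $j=1$, I bound $\partial_{x_k}F$ and $\partial_\tau F$ separately in $C^{0,\alpha}$ and combine with the bound on $\|F\|_{L^\infty}$ from above. Since $x$-differentiation commutes with $\mathscr{F}_{\lambda,t_0}$, the identity $\partial_{x_k}F = \mathscr{F}_{\lambda,t_0}(\partial_{x_k}f)$ reduces the spatial part to the $j=0$ bound applied to $\partial_{x_k}f\in C^{0,\alpha}$. For the temporal part I use the integration-by-parts identity underlying Lemma \ref{lemma3.2}(i),
\[
\partial_\tau F = i\,\mathscr{F}_{\lambda,t_0}(\chi\partial_t f) + i\int_{\mathbb{R}}e^{-\lambda(i\tau-(t-t_0))^2/2}\chi'(t)f(x,t)\,dt,
\]
where the first term is handled by applying the $j=0$ bound to $\partial_t f$, and the second is a cutoff error estimated using $|\chi'|\le \varpi/(\delta T)$ which yields the $\delta^{-1}=\delta^{-j}$ factor in the target inequality.

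The main obstacle is the mismatch between the factor $\lambda$ produced by $\partial_\tau$ and the factor $\sqrt{1/\lambda}$ recovered from the Gaussian, leaving a net $\sqrt{\lambda}$ that must be upgraded to $\lambda$ using the hypothesis $\lambda\ge \lambda_0$; this is the sole role of $\lambda_0$ in the statement and explains the $1/\lambda_0$ term in $C$. Secondarily, one must bookkeep carefully in the $j=1$ case to ensure that the $\delta^{-1}$ factor arises cleanly from the single $\chi'$ term and does not appear spuriously elsewhere.
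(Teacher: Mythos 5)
Your plan follows the same path as the paper: decompose the $C^{j,\alpha}$ norm into the sup-norm, spatial Hölder, and temporal Hölder pieces, handle the temporal modulus via a Lipschitz estimate in $\tau$ upgraded to $\alpha$-Hölder using the bounded diameter of $I_{\delta T/\sqrt{8}}$, and for $j=1$ split $\partial_\tau\mathscr{F}_{\lambda,t_0}f$ into $i\mathscr{F}_{\lambda,t_0}(\partial_t f)$ plus the $\chi'$ cutoff term responsible for the $\delta^{-1}$. The only cosmetic difference is bookkeeping: the paper bounds the $t$-integral crudely by $2T$ and controls the temporal increment by a finite-difference (FTC) formula for the kernel, while you use the Gaussian integral $\int e^{-\lambda(t-t_0)^2/2}\,dt=\sqrt{2\pi/\lambda}$ and differentiate under the integral sign — both are correct and yield the stated structure of $C$.
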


\begin{proof}
Consider first the case $j=0$. Pick $(x_1,\tau _1), (x_2,\tau_2)\in \overline{Q}_{(\delta T)/\sqrt{8}}$. Then we have 
\begin{align*}
\mathscr{F}_{\lambda ,t_0} f(x_1,\tau_1)&-\mathscr{F}_{\lambda ,t_0} f(x_2,\tau_2)=\int_{\mathbb{R}}e^{-\lambda(i\tau_1 -(t-t_0))^2/2}\chi (t)[f(x_1,t)-f(x_2,t)]dt
\\
&\qquad +\int_{\mathbb{R}}\left[e^{-\lambda(i\tau_1 -(t-t_0))^2/2}-e^{-\lambda(i\tau_2 -(t-t_0))^2/2}\right]\chi (t)f(x_2,t)dt.
\end{align*}
Elementary computations show
\begin{align}
&\left|\int_{\mathbb{R}}e^{-\lambda(i\tau_1 -(t-t_0))^2/2}\chi (t)[f(x_1,t)-f(x_2,t)]dt\right| \label{2.2}
\\
&\hskip 3cm \le 2Te^{\lambda (\delta T)^2/16}|x_1-x_2|^\alpha\|f\|_{C^{0,\alpha}(\overline{Q}_T)}.\nonumber
\end{align}
On the other hand, since
\begin{align*}
&e^{-\lambda(i\tau_1 -(t-t_0))^2/2}-e^{-\lambda(i\tau_2 -(t-t_0))^2/2}
\\
&\quad =-i\lambda (\tau_1-\tau_2)\int_0^1e^{-\lambda(i(\tau_2+\rho (\tau_1-\tau_2)) -(t-t_0))^2/2}(i(\tau_2+\rho (\tau_1-\tau_2)) -(t-t_0))d\rho ,
\end{align*}
we have
\begin{equation}\label{2.3}
\left|e^{-\lambda(i\tau_1 -(t-t_0))^2/2}-e^{-\lambda(i\tau_2 -(t-t_0))^2/2}\right|\le \frac{5}{2}\lambda |\tau_1-\tau_2|Te^{\lambda (\delta T)^2/16}.
\end{equation}

Using \eqref{2.2} and \eqref{2.3}, we get
\[
\left[\mathscr{F}_{\lambda ,t_0} f\right]_\alpha \le 3\max \left[\frac{1}{\lambda_0},\left(\frac{T}{2}\right)^{1-\alpha}\right]\lambda Te^{\lambda (\delta T)^2/16}\|f\|_{C^{0,\alpha}(\overline{Q}_T)}.
\]
But
\[
\|\mathscr{F}_{\lambda ,t_0} f\|_{L^\infty (Q_{\delta T/\sqrt{8}})} \le 2Te^{\lambda (\delta T)^2/16}\|f\|_{L^\infty (Q_T)} .
\]
Whence
\[
\|\mathscr{F}_{\lambda ,t_0} f\|_{C^{0,\alpha}\left(\overline{Q}_{\delta T/\sqrt{8}}\right)}\le 3\max \left[\frac{1}{\lambda_0},\left(\frac{T}{2}\right)^{1-\alpha}\right]\lambda Te^{\lambda (\delta T)^2/16}\|f\|_{C^{0,\alpha}(\overline{Q}_T)}.
\]

We proceed now to the proof of $j=1$. In light of the following formula, that we already established above,
\[
\partial _\tau [\mathscr{F}_{\lambda ,t_0} f](x,\tau )=i\int_{\mathbb{R}}e^{-\lambda(i\tau -(t-t_0))^2/2}[\chi '(t)f(x,t)+\chi (t) \partial_t f(x,t)]dt,
\]
we have 
\begin{align*}
\left[\partial _\tau \mathscr{F}_{\lambda ,t_0} f\right]_\alpha &\le 3\varpi\max \left[\frac{1}{\lambda_0},\left(\frac{T}{2}\right)^{1-\alpha}\right]\lambda \delta^{-1}e^{\lambda (\delta T)^2/16}\|f\|_{C^{0,\alpha}(\overline{Q}_T)}
\\
&\hskip 1cm +3\max \left[\frac{1}{\lambda_0},\left(\frac{T}{2}\right)^{1-\alpha}\right]T\lambda e^{\lambda (\delta T)^2/16}\|\partial_tf\|_{C^{0,\alpha}(\overline{Q}_T)}
\end{align*}
and
\[
\|\partial _\tau \mathscr{F}_{\lambda ,t_0} f\|_{L^\infty (Q_{T/\sqrt{8}})}\le 2Te^{\lambda (\delta T)^2/16}\left(\|\partial _tf\|_{L^\infty (Q_T)} +\varpi (\delta T)^{-1}\|f\|_{L^\infty (Q_T)}\right) .
\]
Similarly,  we have also
\[
\left[\partial_{x_j}\mathscr{F}_{\lambda ,t_0} f\right]_\alpha \le 3\max \left[\frac{1}{\lambda_0},\left(\frac{T}{2}\right)^{1-\alpha}\right]\lambda Te^{\lambda (\delta T)^2/16}\|\partial_{x_j}f\|_{C^{0,\alpha}(\overline{Q}_T)},\quad 1\le j\le n .
\]
and 
\[
\|\partial _{x_j}\mathscr{F}_{\lambda ,t_0} f\|_{L^\infty (Q_{T/\sqrt{8}})}\le 2Te^{\lambda (\delta T)^2/16}\|\partial _{x_j}f\|_{L^\infty (Q_T)} ,\quad 1\le j\le n.
\]

We finally get 
\[
\|\mathscr{F}_{\lambda ,t_0} f\|_{C^{1,\alpha}\left(\overline{Q}_{\delta T/\sqrt{8}}\right)}\le C\lambda \delta ^{-1}e^{\lambda (\delta T)^2/16}\|f\|_{C^{1,\alpha}(\overline{Q}_T)}.
\]
The proof is then complete.
\end{proof}

\begin{lemma}\label{lemma3.4}
Fix $\lambda_0>0$ and $T_0>0$. Then there exists a constant $C>0$, only depending on $\lambda_0$ and $T_0$, so that, for any $\lambda \ge \lambda_0$, $T\ge T_0$, $f\in H^1(I_T,H^1(\Omega ))$ and $t_0\in I_{(1-\delta)T}$, we have
\begin{align}
C\|f(\cdot ,t_0)\|_{H^1(\Omega )}\le \lambda^{3/4}\|\mathscr{F}_{\lambda ,t_0}(f)&\|_{L^2(I_{\delta T/\sqrt{8}},H^1(\Omega ))} \label{e5}
\\
&+\delta^{-3/2}\lambda^{-1/4}\|f\|_{H^1(I_T,H^1(\Omega ))}.\nonumber
\end{align}
\end{lemma}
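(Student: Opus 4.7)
The plan is to start from the Fourier inversion identity
\[
f(x,t_0) = \frac{\lambda}{2\pi}\int_{\mathbb R} e^{-\lambda\tau^2/2}\mathscr F_{\lambda,t_0}f(x,\tau)\,d\tau,
\]
obtained by applying Plancherel's theorem to the function $G(x,s)=\chi(t_0+s)f(x,t_0+s)e^{-\lambda s^2/2}$: a direct computation gives $\widehat G(x,-\lambda\tau)=e^{-\lambda\tau^2/2}\mathscr F_{\lambda,t_0}f(x,\tau)$, while $G(x,0)=f(x,t_0)$ because $\chi\equiv 1$ in a neighborhood of $t_0$. The strategy is to approximate $f(\cdot,t_0)$ by the rescaled quantity $\sqrt{\lambda/(2\pi)}\mathscr F_{\lambda,t_0}f(\cdot,0)$ and to control this latter quantity via a Sobolev trace inequality in the variable $\tau$ over the short interval $I_L=(-L,L)$ with $L=\lambda^{-1/2}$.

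First, the quantity $\sqrt{\lambda/(2\pi)}\mathscr F_{\lambda,t_0}f(\cdot,0)$ differs from the Gaussian mollification $g_\lambda(x)=\sqrt{\lambda/(2\pi)}\int e^{-\lambda(t_0-t)^2/2}f(x,t)\,dt$ only by the contribution of $1-\chi$, which is exponentially small in $\lambda(\delta T)^2$. The embedding $H^1(I_T,H^1(\Omega))\hookrightarrow C^{0,1/2}(\overline I_T,H^1(\Omega))$ yields $\|f(\cdot,t_0+s)-f(\cdot,t_0)\|_{H^1(\Omega)}\le |s|^{1/2}\|\partial_t f\|_{L^2(I_T,H^1(\Omega))}$; combined with Minkowski's inequality and the identity $\int\sqrt{\lambda/(2\pi)}\,e^{-\lambda s^2/2}|s|^{1/2}\,ds=C\lambda^{-1/4}$, this yields
\[
\|g_\lambda-f(\cdot,t_0)\|_{H^1(\Omega)}\le C\lambda^{-1/4}\|\partial_t f\|_{L^2(I_T,H^1(\Omega))}.
\]
Applying the one-dimensional trace $|g(0)|^2\le L^{-1}\|g\|_{L^2(I_L)}^2+L\|g'\|_{L^2(I_L)}^2$ pointwise in $x$ to $g(\tau)=\mathscr F_{\lambda,t_0}f(x,\tau)$ and integrating over $\Omega$ (using that $\nabla_x$ commutes with $\mathscr F_{\lambda,t_0}$) then delivers
\[
\sqrt{\tfrac{\lambda}{2\pi}}\|\mathscr F_{\lambda,t_0}f(\cdot,0)\|_{H^1(\Omega)}\le C\lambda^{3/4}\|\mathscr F_{\lambda,t_0}f\|_{L^2(I_L,H^1(\Omega))}+C\lambda^{1/4}\|\partial_\tau \mathscr F_{\lambda,t_0}f\|_{L^2(I_L,H^1(\Omega))}.
\]
By Lemma~\ref{lemma3.2}(i), $\partial_\tau \mathscr F_{\lambda,t_0}f=i\mathscr F_{\lambda,t_0}(\partial_t f)+k_{\lambda,t_0}$; the bound $e^{\lambda\tau^2/2}\le e^{1/2}$ on $I_L$ together with a pointwise Cauchy--Schwarz inequality with Gaussian weight give $\|\mathscr F_{\lambda,t_0}(\partial_t f)\|_{L^2(I_L,H^1(\Omega))}\le C\lambda^{-1/2}\|\partial_t f\|_{L^2(I_T,H^1(\Omega))}$, while a direct estimate, exploiting the fact that $\chi'$ is supported where $|t-t_0|\ge \delta T/2$, yields $\|k_{\lambda,t_0}\|_{L^2(I_L,H^1(\Omega))}\le C(\delta T)^{-1/2}\lambda^{-1/4}e^{-\lambda(\delta T)^2/8}\|f\|_{L^2(I_T,H^1(\Omega))}$.

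The main obstacle is the regime where $L$ exceeds $\delta T/\sqrt 8$ and the Sobolev trace step breaks down. When $\lambda(\delta T)^2\ge 8$, so that $I_L\subset I_{\delta T/\sqrt 8}$, the elementary inequality $u e^{-u^2/8}\le 2e^{-1/2}$ applied at $u=\sqrt\lambda(\delta T)$ converts the $k_{\lambda,t_0}$ contribution into a term bounded by $C(T_0)\lambda^{-1/4}\delta^{-3/2}\|f\|_{L^2(I_T,H^1(\Omega))}$, and assembling the three estimates above yields the announced inequality. In the complementary regime $\lambda(\delta T)^2<8$ the lower bound $\delta^{-3/2}\lambda^{-1/4}\ge c(T_0)\lambda^{1/2}$ holds, so the classical trace estimate $\|f(\cdot,t_0)\|_{H^1(\Omega)}^2\le T^{-1}\|f\|_{L^2(I_T,H^1(\Omega))}^2+2T\|\partial_t f\|_{L^2(I_T,H^1(\Omega))}^2$ gives the conclusion directly, provided $\lambda_0$ is chosen large enough depending on $T_0$ so that $T^{-1}+2T\le C\delta^{-3}\lambda^{-1/2}$ throughout that range.
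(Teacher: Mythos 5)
Your proposal is essentially correct, but it takes a genuinely different route from the paper. The paper starts from the exact inversion identity $f(x,t_0)=\frac{\lambda}{2\pi}\int_{\mathbb{R}}e^{-\lambda\tau^2/2}h_\lambda(x,\tau)\,d\tau$ (with $h_\lambda=\mathscr F_{\lambda,t_0}f$), splits the $\tau$-integral at $|\tau|=\delta T/\sqrt 8$, treats the near part by Cauchy--Schwarz, and treats the far part by an integration by parts in $\tau$ that produces three remainders $g_1,g_2,g_3$, the last of which is controlled through a Plancherel identity for $e^{-\lambda\tau^2/2}\partial_\tau h_\lambda$. You instead replace the exact identity by an approximate one: $f(\cdot,t_0)\approx g_\lambda$ (a Gaussian mollification of width $\lambda^{-1/2}$, with $O(\lambda^{-1/4})$ error through $H^1\hookrightarrow C^{0,1/2}$), $g_\lambda\approx\sqrt{\lambda/(2\pi)}\,h_\lambda(\cdot,0)$ (cutoff error), then a one-dimensional Sobolev trace at $\tau=0$ over $|\tau|<\lambda^{-1/2}$, with $\partial_\tau h_\lambda$ handled via Lemma~\ref{lemma3.2}(i) and pointwise Gaussian Cauchy--Schwarz bounds. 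Both approaches hinge on the same two objects ($\|h_\lambda\|_{L^2(I_{\delta T/\sqrt 8},H^1)}$ and $\partial_\tau h_\lambda$ through Lemma~\ref{lemma3.2}(i)), but the paper never needs a case split while you must distinguish $\lambda^{-1/2}\lesssim\delta T$ from its complement; in exchange you avoid the Plancherel machinery and the triple decomposition of the far part, giving a more elementary argument.

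Two points deserve care to make your sketch rigorous. First, defining $g_\lambda=\sqrt{\lambda/(2\pi)}\int e^{-\lambda(t_0-t)^2/2}f(\cdot,t)\,dt$ over all of $\mathbb{R}$ requires an extension of $f$ outside $I_T$ (e.g.\ the constant extension, which preserves the H\"older modulus); if you only integrate over $I_T$, the normalization $\sqrt{\lambda/(2\pi)}\int_{I_T}e^{-\lambda(t-t_0)^2/2}\,dt\ne 1$ produces an error proportional to $\|f(\cdot,t_0)\|_{H^1(\Omega)}$ itself, which must then be absorbed into the left-hand side --- possible because its coefficient is exponentially small in $\lambda(\delta T)^2$, but you do not say so. Second, your exponents in the bound for $\|k_{\lambda,t_0}\|_{L^2(I_L,H^1(\Omega))}$ are somewhat imprecise (a more careful Gaussian Cauchy--Schwarz produces a different power of $(\delta T)$ and $\lambda$), but the exponential factor $e^{-c\lambda(\delta T)^2}$ dominates any polynomial discrepancy, so the final inequality is unaffected. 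Finally, your remark that ``$\lambda_0$ be chosen large enough'' in the small-$\lambda(\delta T)^2$ regime is unnecessary: since $C$ is allowed to depend on $\lambda_0$ and $T_0$, the bound $T^{-1}+2T\le C\,\delta^{-3}\lambda^{-1/2}$ already holds there for \emph{any} fixed $\lambda_0>0$.
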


\begin{proof}
By density it is enough to give the proof for an arbitrary $f\in C^\infty (\overline{Q}_T)$. Pick then $f\in C^\infty (\overline{Q}_T)$ and let $h_\lambda =\mathscr{F}_{\lambda ,t_0} f$. Then, where $\mathcal{F}$ denotes the Fourier transform,
\begin{align*}
e^{-\tau ^2/(2\lambda)+i\tau t_0}h_\lambda \left(\cdot\, , \tau/\lambda\right)&=\int_{\mathbb{R}}e^{i\tau t}\left[ e^{-\lambda(t-t_0)^2/2}\chi (t) f(\cdot ,t) \right]dt
\\
&=2\pi \mathcal{F}^{-1}\left[ e^{-\lambda(t-t_0)^2/2}\chi (t) f(\cdot ,t) \right](\tau ).
\end{align*}
Therefore, where $x\in \overline{\Omega}$ is arbitrary,
\begin{equation}\label{E3.0}
e^{-\lambda(t-t_0)^2/2}\chi (t) f(x,t)=\frac{1}{2\pi}\int_{\mathbb{R}}e^{-i\tau t}e^{-\tau ^2/(2\lambda)+i\tau t_0}h_\lambda \left(x , \tau/\lambda\right) d\tau .
\end{equation}
We find, by applying Plancherel-Parseval's inequality and then making the change of variable $\tau =\lambda \rho$,
\begin{align}
\|e^{-\lambda(t-t_0)^2/2}\chi f(x,\cdot \,)\|_{L^2(\mathbb{R})}&=\frac{1}{\sqrt{2\pi}}\left\|e^{-\tau ^2/(2\lambda)}h_\lambda \left(x , \cdot /\lambda\right)\right\|_{L^2(\mathbb{R})}\label{E3.2}
\\
&=\frac{\sqrt{\lambda}}{\sqrt{2\pi}}\left\|e^{-\lambda \tau ^2/2}h_\lambda (x , \cdot )\right\|_{L^2(\mathbb{R})}. \nonumber
\end{align}
In particular
\begin{equation}\label{E3.3}
\left\|e^{-\lambda \tau ^2/2}h_\lambda (x , \cdot )\right\|_{L^2(\mathbb{R})}\le \frac{\sqrt{2\pi}}{\sqrt{\lambda}}\|f(x,\cdot \,)\|_{L^2(I_T)},
\end{equation}
from which we deduce that
\begin{equation}\label{E3.4}
\left\|e^{-\lambda \tau ^2/2}\mathscr{F}_{\lambda,t_0}(f)\right\|_{L^2(\Omega \times \mathbb{R})}\le \frac{\sqrt{2\pi}}{\sqrt{\lambda}}\|f\|_{L^2(Q_T)}.
\end{equation}

Again, the change of variable $\tau =\lambda \rho$ in \eqref{E3.0} yields
\begin{equation}\label{E3.1}
e^{-\lambda(t-t_0)^2/2}\chi (t) f(x,t)=\frac{\lambda}{2\pi}\int_{\mathbb{R}}e^{-i\lambda\tau t}e^{-\lambda \tau ^2/2+i\lambda \tau t_0}h_\lambda (x , \tau) d\tau .
\end{equation}
Hence, we get by taking $t=t_0$ in \eqref{E3.1},
\[
f(x,t_0)=\frac{\lambda}{2\pi}\int_{\mathbb{R}}e^{-\lambda \tau ^2/2}h_\lambda (x , \tau) d\tau .
\]
We decompose $f(x,t_0)$ into two terms: $f(x,t_0)=f_1(x,t_0)+f_2(x,t_0)$ with
\begin{align*}
&f_1(x,t_0)=\frac{\lambda}{2\pi}\int_{|\tau|\le \delta T/\sqrt{8}}e^{-\lambda \tau ^2/2}h_\lambda (x , \tau) d\tau ,
\\
&f_2(x,t_0)=\frac{\lambda}{2\pi}\int_{|\tau|>  \delta T/\sqrt{8}}e^{-\lambda \tau ^2/2}h_\lambda (x , \tau) d\tau .
\end{align*}
We obtain by applying again Cauchy-Schwaz's inequality 
\begin{align*}
|f_1(x,t_0)|^2&\le \left(\frac{\lambda^2}{4\pi^2}\int_{|\tau|\le \delta T/\sqrt{8}}e^{-\lambda \tau ^2}d\tau\right) \left(\int_{|\tau|\le \delta T/\sqrt{8}}|h_\lambda (x , \tau)|^2 d\tau\right)
\\
&\le \frac{\lambda^2}{4\pi^2}\frac{\sqrt{\pi}}{\sqrt{\lambda}}\|h(x,\cdot )\|_{L^2(I_{\delta T/\sqrt{8}})}^2,
\end{align*}
where we used that
\[
\int_{|\tau|\le \delta T/\sqrt{8}}e^{-\lambda \tau ^2}d\tau\le \int_{\mathbb{R}}e^{-\lambda \tau ^2}d\tau=\frac{1}{\sqrt{\lambda}} \int_{\mathbb{R}}e^{-s ^2}ds=\frac{\sqrt{\pi}}{\sqrt{\lambda}}.
\]

Therefore
\begin{equation}\label{E3.5}
\|f_1(\cdot ,t_0)\|_{L^2(\Omega )}\le \frac{\lambda^{3/4}}{2\pi^{3/4}}\|h_\lambda \|_{L^2\left(Q_{\delta T/\sqrt{8}}\right)}.
\end{equation}
On the other hand, we have, again according to Cauchy-Schwaz's inequality,
\begin{align*}
|e^{-\lambda \tau ^2/2}h_\lambda (x , \tau)|^2&\le \left(\int_{\mathbb{R}}e^{-\lambda (t-t_0)^2/2}\chi (t) |f(x,t)| dt\right)^2
\\
&\le \left(\int_{\mathbb{R}}e^{-\lambda (t-t_0)^2}\chi (t)dt\right)\left(\int_{\mathbb{R}}\chi (t) |f(x,t)|^2 dt\right)
\\
&\le \frac{1}{\sqrt{\lambda}}\left(\int_{\mathbb{R}}e^{-s^2}\chi (s/\sqrt{\lambda}+t_0)ds\right)\left(\int_{\mathbb{R}}\chi (t) |f(x,t)|^2 dt\right)
\\
&\le \frac{1}{\sqrt{\lambda}}\left(\int_{\mathbb{R}}e^{-s^2}ds\right)\left(\int_{\mathbb{R}}\chi (t) |f(x,t)|^2 dt\right)
\\
&\le \frac{\sqrt{\pi}}{\sqrt{\lambda}}\int_{\mathbb{R}}\chi (t) |f(x,t)|^2 dt.
\end{align*}
Whence
\begin{equation}\label{E3.6}
|e^{-\lambda \tau ^2/2}h_\lambda (x , \tau)|\le \frac{\pi^{1/4}}{\lambda^{1/4}}\|f(x,\cdot )\|_{L^2(I_T)}.
\end{equation}

In light of \eqref{E3.6}, we obtain by making an integration by parts
\begin{align*}
f_2(x,t_0)&=-\frac{\lambda}{2\pi}\int_{|\tau|> (\delta T)/\sqrt{8}}\frac{1}{\lambda \tau} \partial_\tau (e^{-\lambda \tau ^2/2})h_\lambda (x , \tau) d\tau 
\\
&=\frac{\sqrt{2}(\delta T)^{-1}}{\pi}e^{-\lambda (\delta T) ^2/16}\left(h_\lambda (x , -\delta T/\sqrt{8})+h_\lambda (x , \delta T/\sqrt{8})  \right)
\\
&\hskip 1cm -\frac{1}{2\pi}\int_{|\tau|> (\delta T)/\sqrt{8}}\frac{1}{\tau^2} e^{-\lambda \tau ^2/2}h_\lambda (x , \tau) d\tau
\\
&\hskip 2cm +\frac{1}{2\pi}\int_{|\tau|> (\delta T)/\sqrt{8}}\frac{1}{\tau} e^{-\lambda \tau ^2/2}\partial_\tau h_\lambda (x , \tau) d\tau .
\end{align*}
We write $f_2(x,t_0)=g_1(x,t_0)+g_2(x,t_0)+g_3(x,t_0)$, where
\begin{align*}
&g_1(x,t_0)=\frac{\sqrt{2}(\delta T)^{-1}}{\pi}e^{-\lambda (\delta T) ^2/16}\left(h_\lambda (x , -\delta T/\sqrt{8})+h_\lambda (x , \delta T/\sqrt{8})  \right),
\\
&g_2(x,t_0)=-\frac{1}{2\pi}\int_{|\tau|> (\delta T)/\sqrt{8}}\frac{1}{\tau^2} e^{-\lambda \tau ^2/2}h_\lambda (x , \tau) d\tau ,
\\
&g_3(x,t_0)=\frac{1}{2\pi}\int_{|\tau|> (\delta T)/\sqrt{8}}\frac{1}{\tau} e^{-\lambda \tau ^2/2}\partial_\tau h_\lambda (x , \tau) d\tau .
\end{align*}
It follows from \eqref{E3.6}
\begin{align*}
&|g_1(x,t_0)|\le \frac{2\sqrt{2}}{\pi^{3/4}\lambda^{1/4}}(\delta T)^{-1}\|f(x,\cdot )\|_{L^2(I_T)},
\\
&|g_2(x,t_0)|\le \frac{\sqrt{2}}{\pi^{3/4}\lambda^{1/4}}(\delta T)^{-1}\|f(x,\cdot )\|_{L^2(I_T)}.
\end{align*}
Hence
\begin{align}
&\|g_1(\cdot,t_0)\|\le \frac{2\sqrt{2}}{\pi^{3/4}\lambda^{1/4}}(\delta T)^{-1}\|f\|_{L^2(Q_T)},\label{E3.7}
\\
&\|g_2(\cdot,t_0)\|\le \frac{\sqrt{2}}{\pi^{3/4}\lambda^{1/4}}(\delta T)^{-1}\|f\|_{L^2(Q_T)}.\label{E3.8}
\end{align}

We have from the proof of Lemma \ref{lemma3.2}
\[
\partial_\tau h_\lambda (x,\tau )=i\mathscr{F}_{\lambda,t_0}(\partial_tf)(x,\tau)+k_\lambda(x,\tau ),
\]
with
\[
k_\lambda (x,\tau )=i\int_{\mathbb{R}}e^{-\lambda (i\tau-(t-t_0))^2/2}\chi'(t)f(x,t) dt.
\]
We find by proceeding similarly as for $h_\lambda$
\[
e^{-\lambda(t-t_0)^2/2}\chi '(t) f(x,t)=\frac{1}{2\pi}\int_{\mathbb{R}}e^{-i\tau t}e^{-\tau ^2/(2\lambda)+i\tau t_0}k_\lambda \left(x , \tau/\lambda\right) d\tau .
\]
Once again, Plancherel-Parseval's inequality and the change of variable $\tau =\lambda \rho$ yield
\begin{align*}
\|e^{-\lambda(t-t_0)^2/2}\chi ' f(x,\cdot \,)\|_{L^2(\mathbb{R})}&=\frac{1}{\sqrt{2\pi}}\left\|e^{-\tau ^2/(2\lambda)}k_\lambda \left(x , \cdot /\lambda\right)\right\|_{L^2(\mathbb{R})}
\\
&=\frac{\sqrt{\lambda}}{\sqrt{2\pi}}\left\|e^{-\lambda \tau ^2/2}k_\lambda (x , \cdot )\right\|_{L^2(\mathbb{R})}. 
\end{align*}
As $|\chi '|\le \varpi (\delta T)^{-1}$, we get in a straightforward manner 
\begin{equation}\label{E3.9}
\left\|e^{-\lambda \tau ^2/2}k_\lambda\right\|_{L^2(\Omega \times \mathbb{R})}\le \frac{\varpi\sqrt{2\pi}}{\sqrt{\lambda}}(\delta T)^{-1}\|f\|_{L^2(Q_T)}.
\end{equation}
In light of \eqref{E3.4}, with $f$ substituted by $\partial_tf$, and \eqref{E3.9} we obtain
\begin{equation}\label{E3.10}
\left\|e^{-\lambda \tau ^2/2}\partial_\tau h_\lambda\right\|_{L^2(\Omega \times \mathbb{R})}\le \frac{\sqrt{2\pi}}{\sqrt{\lambda}}\|\partial_tf\|_{L^2(Q_T)}+\frac{\varpi\sqrt{2\pi}}{\sqrt{\lambda}}(\delta T)^{-1}\|f\|_{L^2(Q_T)}.
\end{equation}
Cauchy-Schwarz's inequality then yields
\begin{align*}
|g_3(x,t_0)|^2&=\frac{1}{(2\pi)^2}\left(\int_{|\tau|> (\delta T)/\sqrt{8}}\frac{1}{\tau^2}d\tau\right) \left(\int_{|\tau|> (\delta T)/\sqrt{8}}e^{-\lambda \tau ^2}|\partial_\tau h_\lambda (x , \tau)|^2 d\tau \right)
\\
&\le \frac{\sqrt{8}}{(2\pi)^2}(\delta T)^{-1}\int_{\mathbb{R}}e^{-\lambda \tau ^2}|\partial_\tau h_\lambda (x , \tau)|^2 d\tau .
\end{align*}
This and \eqref{E3.10} give
\begin{equation}\label{E3.11}
\|g_3(\cdot ,t_0)\|_{L^2(\Omega )}\le \frac{2^{1/4}(\delta T)^{-1/2}}{\sqrt{\pi}}\frac{1}{\sqrt{\lambda}}\left( \|\partial_tf\|_{L^2(Q_T)}+(\delta T)^{-1}\|f\|_{L^2(Q_T)} \right).
\end{equation}
Let $\lambda_0$ be given. We see, by putting together \eqref{E3.5}, \eqref{E3.7}, \eqref{E3.8} and \eqref{E3.11}, that there exists a constant $C>0$, depending only on $\lambda_0$ and $T_0$, so that, for any $\lambda \ge \lambda_0$, we have
\begin{equation}\label{E3.12}
C\|f(\cdot ,t_0)\|_{L^2(\Omega )}\le \lambda^{3/4}\|\mathscr{F}_{\lambda ,t_0}(f)\|_{L^2(Q_{\delta T/\sqrt{8}})}+\delta^{-3/2}\lambda^{-1/4}\|f\|_{H^1(I_T,L^2(\Omega ))}.
\end{equation}
Noting that $\partial_i$ and $\mathscr{F}_{\lambda ,t_0}$ commute, we find by substituting in \eqref{E3.12} $f$ by $\partial_i f$, $1\le i\le n$,
\begin{align}
C\|\partial_if(\cdot ,t_0)\|_{L^2(\Omega )}\le \lambda^{3/4}\|\partial_i\mathscr{F}_{\lambda ,t_0}(f)&\|_{L^2(Q_{\delta T/\sqrt{8}})}\label{E3.13}
\\
&+\delta^{-3/2}\lambda^{-1/4}\|\partial_if\|_{H^1(I_T,L^2(\Omega ))}.\nonumber
\end{align}
The expected inequality follows readily from \eqref{E3.12} and \eqref{E3.13}. 
\end{proof}

\section{Stability of a Cauchy problem for wave equations}

We prove  Theorem \ref{maintheorem} in several steps. We recall that
\[
\mathcal{X}_T=C^{1,\alpha}(\overline{Q}_T)\cap H^1(I_T,H^1(\Omega ))
\]
 is endowed with its natural norm as an intersection of two Banach spaces, and $P$ is the anisotropic wave operator acting on $Q_T$ as follows 
\[
Pu=\mbox{div} (A\nabla u )-\partial _t^2u,
\]
where  $A=(a^{ij})$ is a symmetric matrix whose coefficients belong to $W^{1,\infty}(\Omega)$ and satisfy \eqref{main1} and \eqref{main2}.

For notational convenience, the gradient with respect to both variables $(x,t)$ and $(x,\tau)$ is denoted by $\nabla$. While the gradient with respect to $x$ is denoted by $\nabla'$.

Given $T\ge T_0>0$, we fix $0<\underline{\rho}<\overline{\rho}<T_0/\sqrt{8}$ and set 
\[
\underline{\delta}=\frac{\sqrt{8}\, \underline{\rho}}{T},\quad \overline{\delta}=\frac{\sqrt{8}\, \overline{\rho}}{T}, \quad \mathfrak{I}=\left[ \underline{\delta},\overline{\delta}\right].
\]
Of course $\underline{\delta}=\underline{\delta}(\underline{\rho},T)$ and $\overline{\delta}=\overline{\delta}(\overline{\rho},T)$.

\begin{proposition}\label{proposition3.1}
Let $T_0>0$. There exist two constant $C>0$ and $c>0$, only depending on $\Omega$, $\kappa$, $\varkappa$, $\underline{\rho}$, $\overline{\rho}$, $T_0$ and $\alpha$,  so that, for any $T\ge T_0$, $u\in C^{1,\alpha}(\overline{Q}_T)\cap H^2(Q_T)$, $\delta \in \mathfrak{I}$ and $t_0\in I_{(1-\delta)T}$, we have
\begin{align*}
&C\|u(\cdot ,t_0)\|_{H^1(\Omega )} \le \delta^2 \|u\|_{\mathcal{X}_T}
\\
&\qquad+ \mathfrak{e}_2(c\delta^{-14})\left(\|Pu\|_{L^2(Q_T)}+\|u\|_{H^{1,1}(\Sigma _T)}+\|\partial_\nu u\|_{L^2(\Sigma_T)}+\|u\|_{H^1\left(J_{\delta/2},L^2(\Omega)\right)}\right).
\end{align*}
\end{proposition}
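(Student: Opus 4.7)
The plan is to convert the wave problem into an elliptic one via the FBI transform, apply the uniform elliptic Cauchy estimate of Theorem \ref{theorem-el2}, then recover $\|u(\cdot,t_0)\|_{H^1(\Omega)}$ via Lemma \ref{lemma3.4}. Concretely, set $v = \mathscr{F}_{\lambda,t_0} u$ on $Q_\rho := \Omega \times (-\rho,\rho)$ with $\rho = \delta T/\sqrt{8} \in [\underline{\rho},\overline{\rho}]$. Since $\mathscr{F}_{\lambda,t_0}$ commutes with spatial derivatives, Lemma \ref{lemma3.2}(ii) yields
\[
\mathrm{div}(A(x)\nabla' v) + \partial_\tau^2 v = \mathscr{F}_{\lambda,t_0}(Pu) + g_{\lambda,t_0} \quad \text{on } Q_\rho.
\]
This is an elliptic equation in $(x,\tau)$ whose coefficient matrix $\mathrm{diag}(A(x),1)$ satisfies \eqref{el1}--\eqref{el2} with constants depending only on $\kappa,\varkappa$, so Theorem \ref{theorem-el2} applies with uniform constants for $\rho\in[\underline{\rho},\overline{\rho}]$.

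Applying Theorem \ref{theorem-el2} to $v$ and writing $E := e^{\lambda(\delta T)^2/16}$, I would estimate each term on the right: Lemma \ref{lemma3.3} bounds $\|v\|_{C^{1,\alpha}(\overline{Q}_\rho)}$ by $C\lambda\delta^{-1}E\|u\|_{C^{1,\alpha}(\overline{Q}_T)}$; Lemma \ref{lemma3.1}(ii) bounds $\|v\|_{L^2(\Sigma_\rho)}$ and the spatial part of $\|\nabla v\|_{L^2(\Sigma_\rho)}$ by $CE$ times the corresponding $L^2(\Sigma_T)$-norms of $u$, which split into the normal piece $\|\partial_\nu u\|_{L^2(\Sigma_T)}$ and the tangential piece (absorbed by $\|u\|_{H^{1,1}(\Sigma_T)}$); the trace $\partial_\tau v|_{\Sigma_\rho}$ is treated through the decomposition of Lemma \ref{lemma3.2}(i) restricted to $\Gamma$, yielding a main term bounded by $CE\|\partial_t u\|_{L^2(\Sigma_T)}\le CE\|u\|_{H^{1,1}(\Sigma_T)}$ together with a $k_{\lambda,t_0}$-residual that decays like $e^{-\lambda(\delta T)^2/16}$; finally $\|\mathrm{div}(A\nabla')v+\partial_\tau^2 v\|_{L^2(Q_\rho)}$ is estimated via Lemmas \ref{lemma3.1}(i) and \ref{lemma3.2}(ii) by $CE\|Pu\|_{L^2(Q_T)}$ plus a $g_{\lambda,t_0}$-residual controlled by $\|u\|_{H^1(J_{\delta/2},L^2(\Omega))}$.

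Next I would invoke Lemma \ref{lemma3.4} in the form
\[
C\|u(\cdot,t_0)\|_{H^1(\Omega)} \le \lambda^{3/4}\|v\|_{H^1(Q_\rho)} + \delta^{-3/2}\lambda^{-1/4}\|u\|_{H^1(I_T,H^1(\Omega))},
\]
and combine with the above to obtain a master inequality of the shape
\[
C\|u(\cdot,t_0)\|_{H^1(\Omega)} \le C\epsilon^\beta \lambda^{7/4}\delta^{-1}E\,\|u\|_{\mathcal{X}_\alpha} + Ce^{c/\epsilon}\lambda^{3/4}E\,\mathcal{D}' + \delta^{-3/2}\lambda^{-1/4}\|u\|_{\mathcal{X}_\alpha},
\]
where $\mathcal{D}'$ is the data side of the proposition. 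A two-step optimization closes the proof: choose $\lambda \sim \delta^{-14}$ so that $\delta^{-3/2}\lambda^{-1/4}\le\delta^2/2$; then $E\le e^{c\delta^{-12}}$, and selecting $\epsilon$ so that $\epsilon^\beta\lambda^{7/4}\delta^{-1}E = \delta^2/2$ gives $\epsilon \sim \exp(-c\delta^{-12}/\beta)$, whence $e^{c/\epsilon}$ collapses to a double exponential majorized by $\mathfrak{e}_2(c\delta^{-14})$.

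The main obstacle is the careful bookkeeping of the residual terms produced by the cutoff $\chi$ (through $\chi',\chi''$) and by the integration by parts in the proof of Lemma \ref{lemma3.4}; in particular, controlling $\|\partial_\tau v\|_{L^2(\Sigma_\rho)}$ via a boundary-adapted version of Lemma \ref{lemma3.2}(i) is what forces the appearance of the tangential time-derivative norm $\|u\|_{H^{1,1}(\Sigma_T)}$ on the right (rather than an interior $H^1(Q_T)$-norm), and this is what makes the whole scheme compatible with the Cauchy-data formulation of the proposition.
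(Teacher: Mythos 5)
Your proposal is correct and follows the paper's proof almost step for step: apply the FBI transform $v=\mathscr{F}_{\lambda,t_0}u$ to obtain an elliptic equation with cutoff residuals, feed the bounds of Lemmas \ref{lemma3.1}--\ref{lemma3.3} into the uniform elliptic Cauchy estimate of Theorem \ref{theorem-el2}, recover $\|u(\cdot,t_0)\|_{H^1(\Omega)}$ via Lemma \ref{lemma3.4}, and then optimize in $\lambda$ and $\epsilon$ to produce the $\delta^2$ and $\mathfrak{e}_2(c\delta^{-14})$ weights. One small arithmetic slip: since $\delta T$ ranges over the bounded interval $[\sqrt{8}\,\underline{\rho},\sqrt{8}\,\overline{\rho}]$, taking $\lambda\sim\delta^{-14}$ gives $E=e^{\lambda(\delta T)^2/16}\sim e^{c\delta^{-14}}$ rather than $e^{c\delta^{-12}}$, but this does not affect your final majorization by $\mathfrak{e}_2(c\delta^{-14})$.
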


\begin{proof}
Set $v_\lambda (x,\tau )=\mathscr{F}_{\lambda ,t_0} u(x,\tau)$. Then
\[ 
Lv_\lambda = \mathscr{F}_{\lambda ,t_0} (Pu)+w_\lambda ,
\]
where
\[
L= \mbox{div} (A(x)\nabla\cdot )+\partial_\tau ^2.
\]
and
\[
w_\lambda (x,\tau)=-\int_{\mathbb{R}}e^{-\lambda(i\tau -(t-t_0))^2/2}\left[2\chi' (t)\partial _tu(x,t)+\chi ''(t)u(x,t)\right]dt.
\]

Lemma \ref{lemma3.1} and Lemma \ref{lemma3.2} enable us to get
\begin{align*}
&\|Lv_\lambda \|_{L^2\left(Q_{\delta T/\sqrt{8}}\right)}+\|v_\lambda \|_{L^2\left(\Sigma _{\delta T/\sqrt{8}}\right)}+\|\nabla v_\lambda \|_{L^2\left(\Sigma_{\delta T/\sqrt{8}}\right)}\label{2.5}
\\
&\hskip 2cm \le \sqrt{2T}e^{\lambda (\delta T)^2/16}\left(\|Pu\|_{L^2(Q_T)}+\|u\|_{L^2(\Sigma_T)}+\|\nabla u\|_{L^2(\Sigma_T)}\right)\nonumber
\\
&\hskip 4cm+ 2\varpi(2T+1)(\delta T)^{-1}e^{-\lambda (\delta T)^2/16}\|u\|_{H^1(J_{\delta/2},L^2(\Omega))}\nonumber
\end{align*}
and, according to \eqref{e4},
\begin{equation}\label{2.6}
\|v_\lambda \|_{C^{1,\alpha}\left(\overline{Q}_{\delta T/\sqrt{8}}\right)}\le C_0\lambda \delta^{-1}e^{\lambda (\delta T)^2/16}\|u\|_{C^{1,\alpha}(\overline{Q}_T)},
\end{equation}
where
\[
C_0=3(1+\varpi)\max \left[\frac{1}{\lambda_0},\left(\frac{T}{2}\right)^{1-\alpha}\right].
\]
We get by applying Theorem \ref{theorem-el2} 
\begin{align}
C\|v_\lambda &\|_{H^1\left(Q_{\delta T/\sqrt{8}}\right)} \label{2.7}
\\
& \le T^{1/2}e^{c/\epsilon}e^{\lambda (\delta T)^2/16}\left(\|Pu\|_{L^2(Q_T)}+\|u\|_{L^2(\Sigma_T)}+\|\nabla u\|_{L^2(\Sigma_T)}\right) \nonumber
\\
&\qquad +Te^{c/\epsilon}\delta^{-1}e^{-\lambda (\delta T)^2/16}\|u\|_{H^1\left(J_{\delta/2},L^2(\Omega)\right)}\nonumber
\\
&\hskip 4cm+T^{1-\alpha}\epsilon^{\beta} \delta^{-1}\lambda e^{\lambda (\delta T)^2/16}\|u\|_{C^{1,\alpha}(\overline{Q}_T)}.\nonumber
\end{align}
Here and until the end of this proof $C>0$ is a generic constant only  depending on $\Omega$, $T_0$, $\kappa$, $\varkappa$, $\underline{\rho}$, $\overline{\rho}$ and $\alpha$.

On the other hand, we have from \eqref{e5}, for $\lambda \ge \lambda_0=16/T^2$,
\begin{equation}\label{2.8}
C\|u(\cdot ,t_0)\|_{H^1(\Omega )}\le \lambda ^{3/4}\|v_\lambda \|_{L^2\left(I_{\delta T/\sqrt{8}},H^1(\Omega )\right)}+
\delta^{-3/2}\lambda ^{-1/4}\|u\|_{\mathcal{X}_T}.
\end{equation}
Then a combination of \eqref{2.7} and \eqref{2.8} gives
\begin{align*}
&C\|u(\cdot ,t_0)\|_{H^1(\Omega )}
\\
&\hskip .5cm\le T^{1/2}\lambda ^{3/4}e^{c/\epsilon}e^{\lambda (\delta T)^2/16}\left(\|Pu\|_{L^2(Q_T)}+\|u\|_{L^2(\Sigma_T)}+\|\nabla u\|_{L^2(\Sigma_T)}\right)
\\
&\hskip 1cm+T\lambda ^{3/4}e^{c/\epsilon}\delta^{-1}e^{-\lambda (\delta T)^2/16}\|u\|_{H^1\left(J_{\delta/2},L^2(\Omega)\right)} 
\\
&\hskip 1.5 cm+\left(T^{1-\alpha}\lambda ^{7/4}\delta^{-1}\epsilon^{\beta} e^{\lambda (\delta T)^2/16}+ \delta^{-3/2}\lambda ^{-1/4}\right)\|u\|_{\mathcal{X}_T},
\end{align*}

We take in this inequality $\lambda =16\delta^{-16} /T^2$ $(\ge16/T^2)$. We get by using that $\sqrt{8}\, \underline{\rho}\le \delta T\le \sqrt{8}\, \overline{\rho}$
\begin{align*}
&C\|u(\cdot ,t_0)\|_{H^1(\Omega )}
\\
&\hskip .5cm\le e^{c/\epsilon}e^{c_0\delta^{-14}}\left(\|Pu\|_{L^2(Q_T)}+\|u\|_{L^2(\Sigma_T)}+\|\nabla u\|_{L^2(\Sigma_T)}+
\|u\|_{H^1\left(J_{\delta/2},L^2(\Omega)\right)} \right)
\\
&\hskip 1.5 cm+\left(\epsilon^{\beta} e^{c_0\delta^{-14}}+ \delta^2\right)\|u\|_{\mathcal{X}_T},
\end{align*}
for some positive constant $c_0$. This inequality with $\epsilon$ chosen so that $\epsilon ^{\beta}=e^{-2c_0\delta^{-14}}$ yields
\begin{align*}
&C\|u(\cdot ,t_0)\|_{H^1(\Omega )} \le \delta^2 \|u\|_{\mathcal{X}_T}
\\
&\quad+ \mathfrak{e}_2(c\delta^{-14})\left(\|Pu\|_{L^2(Q_T)}+\|u\|_{H^{1,1}(\Sigma _T)}+\|\partial_\nu u\|_{L^2(\Sigma_T)}+\|u\|_{H^1\left(J_{\delta/2},L^2(\Omega)\right)},\right)
\end{align*}
as expected.
\end{proof}

The following lemma is a consequence of \cite[Lemma 3.1]{CY} with $(t_0,t_1)=(0,1)$ and a change of variable.

\begin{lemma}\label{lemmaH}
Let $X$ be a Banach space for the norm  $\|\cdot  \|$ and $s\in (0,1/2)$. There exists a constant $c>0$ so that, for any $t_0<t_1$ and $u\in H^s((t_0,t_1),X)$, we have
\[
\left\| \frac{u}{d^s}\right\|_{L^2((t_0,t_1),X)}\le c(t_1-t_0)^{-2s}\max(1,(t_1-t_0)^{-1+2s})\|u\|_{H^s((t_0,t_1),X)}.
\]
Here $d=d(t)=\min\{ |t-t_0|,|t-t_1|\}$.
\end{lemma}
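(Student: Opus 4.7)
The plan is to reduce the general interval $(t_0, t_1)$ to the unit interval $(0,1)$ handled by \cite[Lemma 3.1]{CY} through an affine rescaling in time. Setting $T := t_1 - t_0$ and $v(\tau) := u(t_0 + T\tau)$ for $\tau \in (0, 1)$, the membership $u \in H^s((t_0, t_1), X)$ transfers to $v \in H^s((0, 1), X)$, and the distance-to-boundary weight transforms cleanly as $\tilde d(\tau) := \min(\tau, 1-\tau) = d(t_0 + T\tau)/T$.

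Next I would perform the scaling bookkeeping. A direct change of variable under $t = t_0 + T\tau$ yields the three identities
\begin{align*}
\|v\|_{L^2((0,1),X)} &= T^{-1/2}\,\|u\|_{L^2((t_0,t_1),X)}, \\
[v]_{H^s((0,1),X)} &= T^{s-1/2}\,[u]_{H^s((t_0,t_1),X)}, \\
\|v/\tilde d^{\,s}\|_{L^2((0,1),X)} &= T^{s-1/2}\,\|u/d^s\|_{L^2((t_0,t_1),X)},
\end{align*}
the first two using $d\tau = dt/T$ together with $|\tau_1 - \tau_2| = |t-t'|/T$ in the Slobodeckij integral, and the third from $\tilde d(\tau)^{-2s} = T^{2s} d(t)^{-2s}$. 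Feeding $v$ into the Hardy-type inequality on $(0,1)$ supplied by \cite[Lemma 3.1]{CY} and substituting these identities produces an estimate of the shape
\[
T^{s-1/2}\,\|u/d^s\|_{L^2((t_0,t_1),X)} \le C\bigl(T^{-1}\|u\|_{L^2((t_0,t_1),X)}^2 + T^{2s-1}[u]_{H^s((t_0,t_1),X)}^2\bigr)^{1/2}.
\]

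The final step is to divide through by $T^{s-1/2}$ and simplify the resulting $T$-dependent prefactor by splitting into the regimes $T \le 1$ and $T > 1$. In each regime one of the two powers $T^{-2s}$ and $T^{-1}$ dominates the other (since $0 < s < 1/2$), and the two cases combine into the single factor $(t_1-t_0)^{-2s}\max\bigl(1,(t_1-t_0)^{-1+2s}\bigr)$ of the statement, multiplying $\|u\|_{H^s((t_0,t_1),X)}$. I do not anticipate any substantive obstacle: the argument is essentially a dimensional-analysis scaling computation, and the only care required is in the two-regime case split so as to absorb both scaling contributions under the advertised maximum.
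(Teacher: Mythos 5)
Your scaling setup is exactly the paper's own route: the paper's entire ``proof'' of this lemma is the one-line remark that it follows from \cite[Lemma 3.1]{CY} on $(0,1)$ plus a change of variable, and your three rescaling identities are all correct. The gap is in the last step, where you assert that the two regimes ``combine into the single factor $(t_1-t_0)^{-2s}\max(1,(t_1-t_0)^{-1+2s})$.'' Dividing your displayed inequality by $T^{s-1/2}$ (with $T=t_1-t_0$) gives $\|u/d^s\|_{L^2}\le C\bigl(T^{-2s}\|u\|_{L^2}^2+[u]_{H^s}^2\bigr)^{1/2}\le C\max\bigl(T^{-s},1\bigr)\|u\|_{H^s}$. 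For $T\le 1$ this is dominated by the stated prefactor, which there equals $T^{-1}$; but for $T>1$ the derived prefactor is a constant while the stated one, $T^{-2s}$, tends to zero. No amount of care in the case split can close this: take $u\equiv 1$ on $(0,T)$, so $[u]_{H^s}=0$, $\|u\|_{H^s}\sim T^{1/2}$, and $\|u/d^s\|_{L^2}\sim T^{(1-2s)/2}$, giving a ratio $\sim T^{-s}\gg T^{-2s}$ for $T$ large — so the inequality exactly as printed fails for long intervals, and the prefactor in the lemma appears to be a misprint (what the rescaling actually yields is $\|u/d^s\|_{L^2}\le c\max\bigl((t_1-t_0)^{-s},1\bigr)\|u\|_{H^s}$). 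This is harmless in the paper's only application, where $t_1-t_0=\delta T$ lies in the fixed compact range $[\sqrt{8}\,\underline{\rho},\sqrt{8}\,\overline{\rho}]$, but your write-up should record the bound the scaling actually gives rather than asserting that the advertised prefactor is attainable.
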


\begin{corollary}\label{corollary3.1}
Let $s\in (0,1/2)$ and $T_0>0$. There exist two constants $C>0$ and $c>0$, only depending on $\Omega$, $\kappa$, $\varkappa$, $s$, $\underline{\rho}$, $\overline{\rho}$, $T_0$ and $\alpha$,  so that, for any $T\ge T_0$, $u\in C^{1,\alpha}(\overline{Q}_T)\cap H^2(Q_T)$ and $\delta \in \mathfrak{I}$, we have
\begin{align*}
&C\|u\|_{L^2(I_T,H^1(\Omega ))} \le \delta ^s \|u\|_{\mathcal{X}_T}
\\
&\hskip 2cm+ \mathfrak{e}_2(c\delta^{-14})\Big(\|Pu\|_{L^2(Q_T)}+\|u\|_{H^{1,1}(\Sigma _T)} 
\\
&\hskip 4.5cm  +\|\partial_\nu u\|_{L^2(\Sigma_T)}+\|u\|_{H^1\left(J_{\overline{\delta}/2},L^2(\Omega)\right)}\Big).
\end{align*} 
\end{corollary}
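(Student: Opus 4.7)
The plan is to upgrade the pointwise-in-$t_0$ estimate of Proposition \ref{proposition3.1}, which is valid only for $t_0 \in I_{(1-\delta)T}$, to a squared $L^2$-in-time estimate over all of $I_T$. The boundary strip $J_\delta = I_T \setminus I_{(1-\delta)T}$ is then treated separately by a Hardy-type argument based on Lemma \ref{lemmaH}. Concretely, I would split
\[
\|u\|_{L^2(I_T, H^1(\Omega))}^2 = \|u\|_{L^2(I_{(1-\delta)T}, H^1(\Omega))}^2 + \|u\|_{L^2(J_\delta, H^1(\Omega))}^2
\]
and handle the two pieces by distinct mechanisms.

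For the interior piece, I would square the bound of Proposition \ref{proposition3.1} and integrate over $t_0 \in I_{(1-\delta)T}$, producing a prefactor $\sqrt{|I_{(1-\delta)T}|}\le \sqrt{2T}$. The constraint $\delta T\le \sqrt{8}\,\overline{\rho}$ then gives $\sqrt{T}\le C\delta^{-1/2}$, so $\sqrt{T}\delta^2 \le C\delta^{3/2}\le C\delta^s$ (using $s\le 1/2$ and $\delta\le 1$), and the prefactor $\sqrt{T}$ standing in front of $\mathfrak{e}_2(c\delta^{-14})$ is absorbed into the double exponential with a slightly larger constant. Moreover, since $\delta\le \overline{\delta}$ entails $J_{\delta/2}\subset J_{\overline{\delta}/2}$, the term $\|u\|_{H^1(J_{\delta/2}, L^2(\Omega))}$ that appears in Proposition \ref{proposition3.1} is bounded by $\|u\|_{H^1(J_{\overline{\delta}/2}, L^2(\Omega))}$, matching the last residual in the statement of Corollary \ref{corollary3.1}.

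For the boundary strip, the key observation is that $d(t):=T-|t|\le \delta T$ on $J_\delta$, hence
\[
\|u\|_{L^2(J_\delta, H^1(\Omega))}^2 \le (\delta T)^{2s}\,\bigl\|u/d^s\bigr\|_{L^2(I_T, H^1(\Omega))}^2.
\]
I would then apply Lemma \ref{lemmaH} with $X=H^1(\Omega)$ and $(t_0,t_1)=(-T,T)$, combined with the $T$-uniform continuous embedding $H^1(I_T, H^1(\Omega))\hookrightarrow H^s(I_T, H^1(\Omega))$, to dominate $\|u/d^s\|_{L^2(I_T,H^1(\Omega))}$ by a power of $T$ times $\|u\|_{\mathcal{X}_\alpha(Q_T)}$. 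A careful accounting of the powers of $T$, again invoking $\delta T\le \sqrt{8}\,\overline{\rho}$, makes the combined factor $(\delta T)^s\cdot(\text{Lemma \ref{lemmaH} constant})$ collapse into $C(T_0,s)\delta^s$, so that $\|u\|_{L^2(J_\delta, H^1(\Omega))}\le C(T_0,s)\delta^s\|u\|_{\mathcal{X}_\alpha(Q_T)}$. Summing the two pieces via $\sqrt{a^2+b^2}\le a+b$ produces the announced estimate.

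The main obstacle is the precise bookkeeping of the $T$-dependent factors arising on the two sides of the decomposition, namely the $\sqrt{T}$ from $L^2$ integration on $I_{(1-\delta)T}$ on one side and the negative power of $T$ supplied by Lemma \ref{lemmaH} on the other. These must be disposed of by trading each $T$ for an appropriate power of $\delta$ via the two-sided relation $\delta T\in[\sqrt{8}\,\underline{\rho}, \sqrt{8}\,\overline{\rho}]$, leaving behind a clean factor of $\delta^s$ uniformly in $s\in(0,1/2)$ and an exponential factor of the form $\mathfrak{e}_2(c\delta^{-14})$.
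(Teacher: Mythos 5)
Your proposal reproduces the paper's own argument: split $\|u\|_{L^2(I_T,H^1(\Omega))}$ into the interior piece on $I_{(1-\delta)T}$, obtained by integrating Proposition \ref{proposition3.1} in $t_0$ and absorbing the resulting $\sqrt{T}$ factor via $\delta T\le\sqrt{8}\,\overline{\rho}$, and the boundary strip on $J_\delta$, controlled through Lemma \ref{lemmaH} with the pointwise bound $d(t)\le\delta T$. The bookkeeping of powers of $T$ and $\delta$ is correct, and the inclusion $J_{\delta/2}\subset J_{\overline{\delta}/2}$ is used exactly as in the text, so the proof is sound and essentially identical in strategy.
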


\begin{proof}
First, noting that $J_{\delta/2}\subset J_{\overline{\delta}/2}$, $0<\delta \le \overline{\delta}$, we get by integrating, with respect to $t_0$, both sides of the inequality in Proposition \ref{proposition3.1}
\begin{align}
&C\|u\|_{L^2(I_{(1-\delta)T},H^1(\Omega ))} \le \delta \|u\|_{\mathcal{X}_\alpha (Q_T)} \label{e1+}
\\
&\hskip 2cm+ \mathfrak{e}_2(c\delta^{-14})\Big(\|Pu\|_{L^2(Q_T)}+\|u\|_{H^{1,1}(\Sigma _T)} \nonumber
\\
&\hskip 4.5cm +\|\partial_\nu u\|_{L^2(\Sigma_T)}+\|u\|_{H^1\left(J_{\overline{\delta}/2},L^2(\Omega).\right)}\Big),\nonumber
\end{align}
where we used $\left|I_{(1-\delta)T}\right|\le 2T\le 2\sqrt{8}\, \overline{\rho}\delta^{-1}$.

On the other hand, we have from Lemma \ref{lemmaH}, by noting that 
\[
\sqrt{8}\, \underline{\rho}\le \delta T=T-(1-\delta)T\le \sqrt{8}\, \overline{\delta},
\]
\begin{equation}\label{e2+}
\|u\|_{L^2\left(J_{(1-\delta)T},H^1(\Omega )\right)}\le C'\delta ^s\|u\|_{H^1(I_T,H^1(\Omega ))}.
\end{equation}

The expected inequality follows in a straightforward manner by adding side by side inequalities \eqref{e1+} and \eqref{e2+}.
\end{proof}

The next step consists in showing that  the term $\|u\|_{H^1\left(J_{\overline{\delta}/2},L^2(\Omega)\right)}$ in the  inequality of Corollary \ref{corollary3.1} can be bounded by a quantity involving only boundary terms and $Pu$.

\begin{lemma}\label{lemma3.5}
For $u\in H^2(Q_T)$, we have
\begin{align*}
&e^{-c\delta^{-1}}\|u\|_{H^1\left(J_{\overline{\delta}/2}, L^2(\Omega )\right)}\le \|Pu\|_{H^1\left(J_{\overline{\delta}/2}, L^2(\Omega )\right)}
\\
&\hskip 3cm+\sum_{\epsilon \in \{-,+\}}\sum_{k=0}^1\|\nabla \partial_t^ku(\cdot ,\epsilon T)\|_{L^2\left(\Omega \right)}
\\
&\hskip 5cm+\|u\|_{H^1\left( J_{\overline{\delta}/2},L^2(\partial \Omega)\right)}+\|\partial_\nu u\|_{H^1\left( J_{\overline{\delta}/2},L^2(\partial \Omega)\right)},
\end{align*}
the constant $c$ only depends on $\overline{\rho}$.
\end{lemma}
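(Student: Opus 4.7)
The plan is to prove Lemma \ref{lemma3.5} by an energy estimate for the wave operator $P$ applied separately on each component of $J_{\overline{\delta}/2}$, namely $J^{+}=((1-\overline{\delta}/2)T,T)$ and $J^{-}=(-T,-(1-\overline{\delta}/2)T)$, using the values at the endpoints $t=\pm T$ as terminal Cauchy data; by symmetry I focus on $J^{+}$. Set $E(t)=\tfrac12\int_\Omega(|\partial_tu|^2+A\nabla'u\cdot\nabla'u)\,dx$. Multiplying $Pu=\mathrm{div}(A\nabla'u)-\partial_t^2u$ by $\partial_tu$ and integrating by parts in $\Omega$ (using symmetry and time-independence of $A$) gives
\[
E'(t)=-\int_\Omega (Pu)\,\partial_tu\,dx+\int_{\partial\Omega}(A\nabla'u\cdot\nu)\,\partial_tu\,dS.
\]
Integrating from $t\in J^{+}$ backwards to $T$, bounding the volume part by $\tfrac12\|Pu(\cdot,s)\|_{L^2(\Omega)}^2+E(s)$, and applying Gronwall on an interval of length at most $\overline{\delta}T/2=\sqrt{2}\,\overline{\rho}$ yields $E(t)\le e^{c(\overline{\rho})}\bigl[E(T)+\|Pu\|_{L^2(J^{+}\times\Omega)}^2+\mathcal{B}\bigr]$, where $\mathcal{B}$ is the accumulated boundary contribution and $E(T)\le\kappa\|\nabla u(\cdot,T)\|_{L^2(\Omega)}^2$. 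Integrating this in $t\in J^{+}$ bounds $\|\partial_tu\|_{L^2(J^{+},L^2(\Omega))}$.

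To lift this to $\|u\|_{L^2(J^{+},L^2(\Omega))}$, I would use $u(x,t)=u(x,T)-\int_t^T\partial_su(x,s)\,ds$, which gives
\[
\|u(\cdot,t)\|_{L^2(\Omega)}\le\|u(\cdot,T)\|_{L^2(\Omega)}+\sqrt{T-t}\,\|\partial_su\|_{L^2((t,T),L^2(\Omega))}.
\]
The first term is handled by the Poincaré--Friedrichs inequality $\|u(\cdot,T)\|_{L^2(\Omega)}^2\le C\bigl(\|\nabla'u(\cdot,T)\|_{L^2(\Omega)}^2+\|u(\cdot,T)\|_{L^2(\partial\Omega)}^2\bigr)$ combined with the continuous embedding $H^1(J^{+},L^2(\partial\Omega))\hookrightarrow C^0(\overline{J^{+}},L^2(\partial\Omega))$ to control $\|u(\cdot,T)\|_{L^2(\partial\Omega)}$ from $\|u\|_{H^1(J_{\overline{\delta}/2},L^2(\partial\Omega))}$. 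A symmetric argument on $J^{-}$, integrating forward from $-T$, closes the estimate on all of $J_{\overline{\delta}/2}$.

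The main obstacle is the boundary term $\int_{\partial\Omega}(A\nabla'u\cdot\nu)\partial_tu\,dS$: the conormal flux $A\nabla'u\cdot\nu$ produced by integration by parts does not coincide with the available datum $\partial_\nu u$ in the anisotropic case. My plan is to split $\nabla'u=(\partial_\nu u)\nu+\nabla_{\tan}u$ on $\partial\Omega$; the normal piece $(A\nu\cdot\nu)\partial_\nu u\,\partial_tu$ is estimated via Cauchy--Schwarz and, after time integration, falls inside the data $\|\partial_\nu u\|_{H^1(J,L^2(\partial\Omega))}$ and $\|u\|_{H^1(J,L^2(\partial\Omega))}$. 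The tangential piece $(A\nabla_{\tan}u\cdot\nu)\partial_tu$ is more delicate: I would use tangential integration by parts on the closed manifold $\partial\Omega$ to transfer $\nabla_{\tan}$ onto the factor $(A\nu)\partial_tu$, then move the resulting $\partial_t$ onto $u|_{\partial\Omega}$ via integration by parts in time (exploiting that $A$ is time-independent, so $\partial_t$ commutes with tangential derivatives). All residual pairings are controlled by $\|u\|_{H^1(J,L^2(\partial\Omega))}$ together with contributions at $t=\pm T$ that are absorbed into $\|\nabla u(\cdot,\pm T)\|_{L^2(\Omega)}$ and $\|\nabla\partial_tu(\cdot,\pm T)\|_{L^2(\Omega)}$ via trace. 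The exponential prefactor $e^{-c\delta^{-1}}$ with $c=c(\overline{\rho})$ emerges at the end from collecting the trace, Poincaré, and Gronwall constants, together with the length $\sqrt{2}\,\overline{\rho}$ of the time slab.
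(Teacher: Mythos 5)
Both you and the paper attack Lemma~\ref{lemma3.5} by the same basic route: an energy identity for $P$ on each component of $J_{\overline{\delta}/2}$ with terminal data at $t=\pm T$, a Gr\"onwall absorption of the source term, and then a device to pass from $\nabla' u$ and $\partial_t u$ to $\|u\|_{L^2}$ (you use the fundamental theorem of calculus from $t=T$ together with Poincar\'e; the paper instead invokes that $(\|\nabla'w\|_{L^2(\Omega)}^2+\|w\|_{L^2(\partial\Omega)}^2)^{1/2}$ is an equivalent norm on $H^1(\Omega)$). Both variants of this last step are legitimate. Your Gr\"onwall factor is actually \emph{bounded} on a slab of length $\overline{\delta}T/2=\sqrt{2}\,\overline{\rho}$, which is stronger than what the lemma asks; the $e^{c\delta^{-1}}$ tolerated by the statement comes from the paper's rougher bound $\int_t^T e^{(\tau-t)/2}d\tau\le Te^{T/2}$ and $T\le\sqrt{8}\,\overline{\rho}/\delta$, so your constant is consistent with the lemma.

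The genuine gap is in the treatment of the boundary flux, and it is precisely the difficulty you identify. After multiplying by $\partial_t u$ and integrating by parts in $x$, the boundary term is $\int_{\partial\Omega}(A\nabla'u\cdot\nu)\,\partial_t u\,d\sigma$, and the tangential piece $\int_{\partial\Omega}(A\nu)_{\tan}\cdot\nabla_{\tan}u\;\partial_t u\,d\sigma$ is not controlled by the norms $\|u\|_{H^1(J,L^2(\partial\Omega))}$ and $\|\partial_\nu u\|_{H^1(J,L^2(\partial\Omega))}$ appearing on the right-hand side of the lemma. Your proposed fix — tangential integration by parts on $\partial\Omega$ followed by integration by parts in $t$ — is circular. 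Concretely, writing $I=\int_t^T\int_{\partial\Omega}(A\nu)_{\tan}\cdot\nabla_{\tan}u\,\partial_s u\,d\sigma\,ds$, the tangential integration by parts gives $I=-\int_t^T\int_{\partial\Omega}u\,(\mathrm{div}_{\tan}(A\nu)_{\tan})\,\partial_s u\,d\sigma\,ds-\int_t^T\int_{\partial\Omega}u\,(A\nu)_{\tan}\cdot\nabla_{\tan}\partial_s u\,d\sigma\,ds$, and the subsequent integration by parts in $s$ of the last integral (using $\nabla_{\tan}\partial_s u=\partial_s\nabla_{\tan}u$) produces $\bigl[\int_{\partial\Omega}u\,(A\nu)_{\tan}\cdot\nabla_{\tan}u\,d\sigma\bigr]_t^T-I$. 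Substituting back, the two copies of $I$ cancel and you are left with a tautological identity; no bound on $I$ results, and the temporal boundary contribution $\int_{\partial\Omega}u(A\nu)_{\tan}\cdot\nabla_{\tan}u\,d\sigma$ at $t=\pm T$ still involves $\nabla_{\tan}u$ on $\partial\Omega$, which is not controlled by $\|\nabla u(\cdot,\pm T)\|_{L^2(\Omega)}$ via trace without half a derivative to spare. So the tangential boundary term is not closed by your argument. Note that the paper's own proof writes this boundary term as $\int_t^T\int_{\partial\Omega}u\,\partial_\nu u\,d\sigma\,ds$ and thereby never confronts the conormal flux at all — your version of the derivation is the more honest one and it surfaces a real obstruction that the written proof does not resolve; to recover the lemma as stated, the tangential part of the flux needs a separate argument (or a different multiplier, or a hypothesis on $A\nu$ at the boundary) that neither your proposal nor the paper's text supplies.
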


\begin{proof}
Let $u\in H^2(Q_T)$. We then get by mimicking the proof of the usual energy estimates
\begin{align*}
\int_\Omega A\nabla' u(x ,t)&\cdot \nabla' u(x,t)dx+\int_\Omega \left[\partial _tu(x,t)\right]^2dx
\\
&=\int_t^T\int_\Omega Pu(x,s)\partial _tu(x,s)dxds +\int_t^T\int_{\partial \Omega}u(x,t)\partial_\nu u(x,t)d\sigma(x)dt
\\
&+\int_\Omega A\nabla' u(\cdot ,T)\cdot \nabla' u(\cdot ,T)dx+\int_\Omega \left[\partial _tu(\cdot ,T)\right]^2dx.
\end{align*}
Since 
\begin{align*}
\left| \int_t^T\int_\Omega Pu(x,s)\partial _tu(x,s)dxds\right|&\le \frac{1}{2}\int_t^T\int_\Omega Pu(x,s)^2dxds
\\
&\hskip 1cm+\frac{1}{2}\int_t^T\int_\Omega [\partial _tu(x,s)]^2dxds
\end{align*}
and
\begin{align*}
\int_t^T\int_{\partial \Omega}u(x,s)\partial_\nu u(x,s)dS(x)ds &\le \frac{1}{2}\int_t^T\int_{\partial \Omega}u^2(x,t)d\sigma(x)ds
\\
&\hskip 1cm+\frac{1}{2}\int_t^T\int_{\partial \Omega}(\partial_\nu u(x,s))^2d\sigma(x)ds,
\end{align*}
we find, where $t\in \left((1-\overline{\delta}/2)T,T\right)$,
\begin{align}
\int_\Omega A\nabla' u(x ,t)\cdot \nabla' u(x,t)dx&+\int_\Omega \left[\partial _tu(x,t)\right]^2dx \le \label{2.10}
\\
&\qquad \Phi+\frac{1}{2}\int_t^T\int_\Omega [\partial _tu(x,s)]^2dxds,\nonumber
\end{align}
with
\begin{align*}
&\Phi=\frac{1}{2}\int_{(1-\overline{\delta}/2)T}^T\int_\Omega Pu(x,s)^2dxds+\int_\Omega A\nabla' u(\cdot ,T)\cdot \nabla' u(\cdot ,T)dx+\int_\Omega \left[\partial _tu(\cdot ,T)\right]^2dx
\\
&\hskip 1.5 cm+\frac{1}{2}\int_{(1-\overline{\delta}/2)T}^T\int_{\partial \Omega}u^2(x,t)d\sigma(x)ds+\frac{1}{2}\int_{(1-\overline{\delta}/2)T}^T\int_{\partial \Omega}(\partial_\nu u(x,s))^2d\sigma(x)ds.
\end{align*}
In particular \eqref{2.10} yields
\[
\int_\Omega \left[\partial _tu(x,t)\right]^2dx \le \Phi+\frac{1}{2}\int_t^T\int_\Omega [\partial _tu(x,s)]^2dxds.
\]
We obtain by applying Gr\"onwall's lemma
\[
\int_\Omega \left[\partial _tu(x,t)\right]^2dx\le \Phi\int_t^T e^{(\tau-t)/2}d\tau \le Te^{T/2}\Phi \le e^{c\delta^{-1}}\Phi.
\]
Here and henceforward $c$ is a constant only depending on $\overline{\rho}$. 

This in \eqref{2.10} gives
\[
\int_\Omega A\nabla' u(x ,t)\cdot \nabla' u(x,t)dx\le e^{c\delta^{-1}}\Phi .
\]
As $A\nabla' u(x ,t)\cdot \nabla' u(x,t)\ge \kappa |\nabla' u(x,t)|^2$, we have
\[
\| \nabla u\|_{L^2\left(\Omega \times \left((1-\overline{\delta}/2)T,T\right)\right)}^2 \le e^{c\delta^{-1}}\Phi .
\]

Using that 
\[
w\in H^1(\Omega )\rightarrow \left( \|\nabla' u\|_{L^2(\Omega )}^2+\| u\|_{L^2(\partial \Omega )}^2\right)^{\frac{1}{2}}
\]
defines an equivalent norm on $H^1(\Omega )$, we deduce
\[
\|u\|_{L^2\left(\Omega \times \left((1-\overline{\delta}/2)T,T\right)\right)}^2\le e^{c\delta^{-1}} \Phi .
\]
That is
\begin{align*}
&e^{-c\delta^{-1}}\|u\|_{L^2\left(\Omega \times \left((1-\overline{\delta}/2)T,T\right)\right)}\le \|Pu\|_{L^2\left(\Omega \times \left((1-\overline{\delta}/2)T,T\right)\right)}+\|\nabla u(\cdot ,T)\|_{L^2\left(\Omega \right)}
\\
&\hskip 3.5cm +\|u\|_{L^2\left(\partial \Omega \times \left((1-\overline{\delta}/2)T,T\right)\right)}+\|\partial_\nu u\|_{L^2\left(\partial \Omega \times \left((1-\overline{\delta}/2)T,T\right)\right)}.
\end{align*}
Since we have the same inequality when $u$ is substituted by $\partial_tu$, we get
\begin{align}
e^{-c\delta^{-1}}&\|u\|_{H^1\left(\left((1-\overline{\delta}/2)T,T\right), L^2(\Omega )\right)}\le \|Pu\|_{H^1\left(\left((1-\overline{\delta}/2)T,T\right), L^2(\Omega )\right)}\label{2.11}
\\
&\hskip.5cm+\|\nabla u(\cdot ,T)\|_{L^2\left(\Omega \right)} +\|\nabla \partial _tu(\cdot ,T)\|_{L^2\left(\Omega \right)}\nonumber
\\ 
&\hskip 1cm+\|u\|_{H^1\left( \left((1-\overline{\delta}/2)T,T\right),L^2(\partial \Omega)\right)}+\|\partial_\nu u\|_{H^1\left( \left((1-\overline{\delta}/2)T,T\right),L^2(\partial \Omega)\right)}.\nonumber
\end{align}
We obtain similarly
\begin{align}
&e^{-c\delta^{-1}}\|u\|_{H^1\left(\left(-T,-(1-\overline{\delta}/2)T\right), L^2(\Omega )\right)}\label{2.12}
\\
&\le \|Pu\|_{H^1\left(\left(-T,-(1-\overline{\delta}/2)T\right), L^2(\Omega )\right)} \nonumber
\\
&\quad +\|\nabla u(\cdot ,-T)\|_{L^2\left(\Omega \right)}+\|\nabla \partial _tu(\cdot ,-T)\|_{L^2\left(\Omega \right)}\nonumber
\\
&\qquad+\|u\|_{H^1\left( \left(-T,-(1-\overline{\delta}/2)T\right),L^2(\partial \Omega)\right)}+\|\nabla u\|_{H^1\left( \left(-T,-(1-\overline{\delta}/2)T\right),L^2(\partial \Omega)\right)}.\nonumber
\end{align}
The expected inequality follows by putting together \eqref{2.11} and \eqref{2.12}.
\end{proof}

Consider the following notation
\begin{align*}
&\mathcal{D}_{\overline{\delta}}(u)=\|Pu\|_{H^1\left(I_T, L^2(\Omega )\right)} +\|u\|_{H^{1,1}(\Sigma _T)}+\|\partial_\nu u\|_{L^2(\Sigma_T)}
\\
&\quad +\sum_{\epsilon\in \{-,+\}}\sum_{k=0}^1\|\nabla \partial _t^ku(\cdot ,\epsilon T)\|_{L^2(\Omega )}+\|u\|_{H^1\left(J_{\overline{\delta}/2},L^2(\partial \Omega)\right)}+\|\partial_\nu u\|_{H^1\left( J_{\overline{\delta}/2},L^2(\partial \Omega)\right)}.
\end{align*}

In light of Corollary \ref{corollary3.1} and Lemma 3.5, we can state the following result.
\begin{theorem}\label{theorem+3.1}
Let $s\in (0,1/2)$ and $T_0>0$. There exist two constants $C>0$ and $c>0$, only depending on $\Omega$, $T_0$, $\kappa$, $\varkappa$, $s$, $\underline{\rho}$, $\overline{\rho}$ and $\alpha$,  so that, for any $T\ge T_0$, $u\in C^{1,\alpha}(\overline{Q}_T)\cap H^2(Q_T)$ and $\delta \in \mathfrak{I}$, we have
\[
C\|u\|_{L^2\left(I_T, H^1(\Omega )\right)} \le \delta ^s\|u\|_{\mathcal{X}_T} + \mathfrak{e}_2(c\delta^{-14})\mathcal {D}_{\overline{\delta}}(u).
\] 
\end{theorem}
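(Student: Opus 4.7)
The statement is flagged as a direct consequence of Corollary \ref{corollary3.1} and Lemma \ref{lemma3.5}, so the proof should be pure assembly rather than new analysis. My plan is as follows.

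First, I would start from the inequality provided by Corollary \ref{corollary3.1}:
\[
C\|u\|_{L^2(I_T,H^1(\Omega))} \le \delta^{s}\|u\|_{\mathcal{X}_\alpha(Q_T)} + \mathfrak{e}_2(c\delta^{-14})\Bigl(\|Pu\|_{L^2(Q_T)}+\|u\|_{H^{1,1}(\Sigma_T)}+\|\partial_\nu u\|_{L^2(\Sigma_T)}+\|u\|_{H^1(J_{\overline{\delta}/2},L^2(\Omega))}\Bigr).
\]
All of the first three right-hand side terms in the parenthesis are trivially dominated by the corresponding pieces of $\mathcal{D}_{\overline{\delta}}(u)$, since $\|Pu\|_{L^2(Q_T)}\le \|Pu\|_{H^1(I_T,L^2(\Omega))}$ and the boundary norms appear verbatim in the definition of $\mathcal{D}_{\overline{\delta}}(u)$. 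So the only term that still requires treatment is the interior term $\|u\|_{H^1(J_{\overline{\delta}/2},L^2(\Omega))}$.

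Second, I would apply Lemma \ref{lemma3.5} to that term: it yields
\[
\|u\|_{H^1(J_{\overline{\delta}/2},L^2(\Omega))} \le e^{c\delta^{-1}}\,\mathcal{D}_{\overline{\delta}}(u),
\]
since every term on the right-hand side of Lemma \ref{lemma3.5} (the source term $\|Pu\|_{H^1(J_{\overline{\delta}/2},L^2(\Omega))}$, the two end-time Cauchy-type traces $\|\nabla \partial_t^k u(\cdot,\pm T)\|_{L^2(\Omega)}$ for $k=0,1$, and the boundary terms $\|u\|_{H^1(J_{\overline{\delta}/2},L^2(\partial\Omega))}$ and $\|\partial_\nu u\|_{H^1(J_{\overline{\delta}/2},L^2(\partial\Omega))}$) is, by construction, a summand in $\mathcal{D}_{\overline{\delta}}(u)$.

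Finally, I would plug this bound back into the inequality from Corollary \ref{corollary3.1}. This produces a factor of the form $\mathfrak{e}_2(c\delta^{-14})\cdot e^{c\delta^{-1}}$ in front of $\mathcal{D}_{\overline{\delta}}(u)$. Since $\delta \in \mathfrak{I}$ is bounded away from zero by quantities controlled by $\underline{\rho}$ and $T_0$, and the double exponential dominates the single exponential in an elementary way, we have for some larger constant $c'>0$
\[
e^{c\delta^{-1}}\,\mathfrak{e}_2(c\delta^{-14}) = e^{\,c\delta^{-1}+e^{c\delta^{-14}}}\le e^{\,e^{c'\delta^{-14}}}=\mathfrak{e}_2(c'\delta^{-14}).
\]
Relabeling $c'$ as $c$ yields exactly the announced inequality.

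There is no genuine obstacle here: the entire argument is bookkeeping, with the only mildly subtle point being the absorption of the single exponential $e^{c\delta^{-1}}$ coming from Lemma \ref{lemma3.5} into the double exponential $\mathfrak{e}_2(c\delta^{-14})$ coming from Corollary \ref{corollary3.1}. Every other term on the right-hand side of Lemma \ref{lemma3.5} has been deliberately engineered to fit inside the definition of $\mathcal{D}_{\overline{\delta}}(u)$, which is precisely why this lumped norm was introduced just before the theorem.
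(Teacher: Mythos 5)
Your proof is correct and follows exactly the route the paper intends, namely assembling Corollary \ref{corollary3.1} with Lemma \ref{lemma3.5} and absorbing the extra factor $e^{c\delta^{-1}}$ into $\mathfrak{e}_2(c\delta^{-14})$. One small slip in the justification: the absorption works not because $\delta$ is bounded away from zero (it is not, uniformly in $T$, since $\underline{\delta}=\sqrt{8}\,\underline{\rho}/T\to 0$ as $T\to\infty$), but because $\delta\le\overline{\delta}\le\sqrt{8}\,\overline{\rho}/T_0<1$, which ensures $\delta^{-1}\le\delta^{-14}$ and hence $c\delta^{-1}+e^{c\delta^{-14}}\le e^{c'\delta^{-14}}$; the displayed inequality you wrote is nonetheless correct.
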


Now, in order to complete the proof of Theorem \ref{maintheorem}, we  prove that the data at $\pm T$ in $\mathcal{D}_{\overline{\delta}}(u)$ can be bounded by a quantity involving only lateral boundary terms. Prior to do that, we introduce an extension operator. Fix $a<b$, let $\varphi \in C_0^\infty ((a,b),H^{3/2}(\partial \Omega ))$ and denote by $E\varphi (\cdot ,t)\in H^2(\Omega )$ the unique solution of the BVP
\[
\left\{
\begin{array}{ll}
\Delta \psi =0\quad &\mbox{in}\; \Omega ,
\\
\psi =\varphi(\cdot ,t) &\mbox{on}\; \partial \Omega .
\end{array}
\right.
\]
By classical elliptic a priori estimates we have
\[
\|E\varphi (\cdot ,t)\|_{H^2(\Omega )}\le c_0\|\varphi (t)\|_{H^{3/2}(\partial \Omega )},
\]
the constant $c_0$ only depends on $\Omega$. 

One can check in a straightforward manner that $E\varphi \in C_0^\infty ((a,b),H^2(\Omega ))$ with $\partial _t^kE\varphi = E\partial_t^k\varphi$ and hence
\[
\|\partial _t^kE\varphi (\cdot ,t)\|_{H^2(\Omega )}\le c_0\|\partial _t^k\varphi (\cdot ,t)\|_{H^{3/2}(\partial \Omega )}.
\]
In consequence $E$ is extended as a bounded operator, still denoted by $E$, from $H^k((a,b),H^{3/2}(\partial \Omega ))$ into $H^k((a,b),H^2(\Omega ))$, $k\ge 0$ is an integer, in such a way that
\[
\|E\varphi\|_{H^k((a,b),H^2(\Omega ))}\le c_0\|\varphi \|_{H^k((a,b),H^{3/2}(\partial \Omega ))}.
\]
By Observing that $H^2((a,b),H^2(\Omega ))$ is continuously embedded in $H^2(\Omega \times (a,b))$, we have in particular
\begin{equation}\label{a1}
\|E\varphi\|_{H^2(\Omega \times (a,b))}\le c_0\|\varphi \|_{H^2((a,b),H^{3/2}(\partial \Omega ))}.
\end{equation}

Recall that  $\mathfrak{d}$ is defined by
\[
\mathfrak{d}=\sup_{y\in \Omega}\inf_{x\in \partial \Omega} d(x,y),
\]
with
\begin{align*}
d(x,y)=\inf\{L(\gamma ); \gamma :[0,1]&\rightarrow \overline{\Omega}
\\
& C^1\mbox{-piecewise path so that}\; \gamma (0)=x,\; \gamma (1)=y\},
\end{align*}
where $L(\gamma)$ denotes the length of $\gamma$.

\begin{proposition}\label{proposition3.2}
Let $\mathbf{c}_1>\mathbf{c}_0>\mathfrak{d}$. There exist three constants $C>0$, $c>0$ and $\mu_0>0$, only depending on $\Omega$, $\kappa$, $\varkappa$, $\mathbf{c}_0$ and $\mathbf{c}_1$, so that, for any $\mathbf{c}_0\le \mathbf{c}\le \mathbf{c}_1$, $b-a=\mathbf{c}$, $\mu \ge \mu_0$ and $u\in H^3((a,b),H^2(\Omega ))$, we have
\begin{align}
&\|\partial_tu(\cdot ,b)\|_{L^2(\Omega )}+\|\partial_t^2u(\cdot ,b)\|_{L^2(\Omega )}\label{a5.0}
\\ 
&\qquad \le Ce^{c\mu}\left( \|Pu\|_{H^1((a,b),L^2(\Omega ))}+\|u\|_{H^3((a,b),H^{3/2}(\partial \Omega ))}\right.\nonumber
\\ 
&\hskip 2cm \left. +\|\partial_\nu u\|_{H^1((a,b); L^2(\partial \Omega ))}\right)+\frac{C}{\mu^{1/2}} \|u\|_{H^3((a,b),H^1(\Omega ))}\nonumber
\end{align}
and
\begin{align}
&\|\nabla' u(\cdot ,b)\|_{L^2(\Omega )}+\|\nabla' \partial_tu(\cdot ,b)\|_{L^2(\Omega )}\label{a6.0}
\\
&\qquad \le Ce^{c\mu}\left( \|Pu\|_{L^2(\Omega \times (a,b))}+\|u\|_{H^2((a,b),H^{3/2}(\partial \Omega ))}\right.\nonumber
\\
&\hskip 2cm \left. +\|\partial_\nu u\|_{L^2(\partial \Omega \times (a,b)}\right)+\frac{C}{\mu^{1/4}} \|u\|_{H^2((a,b),H^2(\Omega ))} .\nonumber
\end{align}
\end{proposition}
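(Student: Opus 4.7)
The goal is to control the Cauchy data of $u$ at the endpoint $t = b$ by lateral boundary data, the source $Pu$, and an interior remainder regulated by the parameter $\mu$. The trade-off between the factor $e^{c\mu}$ on the data and the factor $\mu^{-\alpha}$ on the $H^3((a,b),H^1(\Omega))$ norm is characteristic of a quantitative observability/Carleman inequality with a free large parameter; the hypothesis $b-a \ge \mathbf{c}_0 > \mathfrak{d}$ is precisely the geometric threshold that allows such an inequality from the \emph{full} lateral boundary.

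\emph{Step 1: reduction to homogeneous Dirichlet data.} Let $\varphi = u|_{\partial\Omega\times(a,b)}$ and set $w = u - E\varphi$, with $E$ the harmonic extension just introduced. Then $w$ vanishes on the lateral boundary and, by \eqref{a1} and its higher-order $t$-analogues,
\[
\|E\varphi\|_{H^k((a,b),H^2(\Omega))} \le c_0 \|\varphi\|_{H^k((a,b),H^{3/2}(\partial\Omega))}, \quad k = 2,3.
\]
Hence $Pw = Pu - P(E\varphi)$, where $P(E\varphi)$ is bounded in $H^1((a,b),L^2(\Omega))$ by the $H^3((a,b),H^{3/2}(\partial\Omega))$-norm of $\varphi$, and $\partial_\nu w$ differs from $\partial_\nu u$ by a similarly controlled term. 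It therefore suffices to establish the conclusion for $w$, with $Pu$, $u|_{\partial\Omega}$ and $\partial_\nu u$ replaced by the corresponding quantities for $w$.

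\emph{Step 2: observability-type estimate for $w$.} Because $w$ has zero Dirichlet trace and $b - a > \mathfrak{d}$, every point of $\Omega$ can be reached from $\partial \Omega$ along a $C^1$-piecewise path of length strictly smaller than $b - a$, which is the geometric condition for observability of $P$ from the whole lateral boundary. A Carleman estimate for $P$ with a large parameter $\mu$ and a weight $\theta$ concentrated near $t = b$ then yields
\[
\|\partial_t w(\cdot,b)\|_{L^2(\Omega)} \le C e^{c\mu}\bigl(\|Pw\|_{L^2(Q)} + \|\partial_\nu w\|_{L^2(\partial\Omega\times(a,b))}\bigr) + C\mu^{-1/2}\|w\|_{H^1((a,b),H^1(\Omega))},
\]
with $Q = \Omega\times(a,b)$. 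The exponential factor is the bound on the Carleman weight over $Q$, and the $\mu^{-1/2}$ remainder collects the bulk contribution that cannot be absorbed into the weighted energy. Applying this inequality to $\partial_t u$, which satisfies $P\partial_t u = \partial_t Pu$, controls $\|\partial_t^2 w(\cdot,b)\|_{L^2}$. Undoing the reduction of Step~1 then delivers \eqref{a5.0}.

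\emph{Step 3 and main obstacle.} The gradient inequality \eqref{a6.0} is derived in the same way, either from a gradient-version of the Carleman estimate or by applying \eqref{a5.0} to a suitable auxiliary problem and interpolating between $L^2$-energy and $H^1$-regularity; the loss from $\mu^{-1/2}$ to $\mu^{-1/4}$ is the price of that interpolation. The main obstacle is Step~2: obtaining the \emph{explicit} $\mu$-dichotomy rather than a standard observability with a single absolute constant. The natural route is to build a pseudoconvex weight $\psi(x,t)$ for $P$ with level sets transverse to $t=b$, set $\theta = e^{\mu\psi}$, and track carefully how the non-absorbable Carleman terms decay in $\mu^{-1/2}$; alternatively, one can split $w$ along eigenmodes of an auxiliary elliptic operator, observe the low-frequency block with constant $e^{c\mu}$, and bound the high-frequency tail by $\mu^{-1/2}$ via Parseval. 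In either realization, the condition $\mathbf{c}_0 > \mathfrak{d}$ is precisely what keeps the Carleman weight (respectively, the low-frequency observability constant) bounded by $e^{c\mu}$.
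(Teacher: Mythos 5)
Your Step 1 (reduction to homogeneous Dirichlet data via the harmonic extension $E$) matches the paper's argument, and you correctly guess that the final $\mu^{-1/4}$ loss for the gradient bound comes from interpolation. However, there is a genuine gap at the heart of Step 2, which you yourself flag as "the main obstacle" but do not resolve, and the specific inequality you assert there does not match what the available tools actually give.

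The paper does not prove a new Carleman or observability estimate. It invokes \cite[Theorem 6.1]{LL} (or \cite[Theorem 1]{Ro95}) for the reduced function $v = u - E\varphi$, which yields control of the \emph{weaker} pair $\|v(\cdot,b)\|_{L^2(\Omega)} + \|\partial_t v(\cdot,b)\|_{H^{-1}(\Omega)}$ with a remainder $C\mu^{-1}\bigl(\|v(\cdot,b)\|_{H^1} + \|\partial_t v(\cdot,b)\|_{L^2}\bigr)$. Your displayed inequality in Step 2 instead asserts direct $L^2$ control of $\partial_t w(\cdot,b)$ with a $\mu^{-1/2}$ remainder; that statement is not what the cited observability result provides, and you do not derive it. The missing bridge is an interpolation argument: one uses $\|\partial_t u(\cdot,b)\|_{L^2} \le c_1 \|\partial_t u(\cdot,b)\|_{H^{-1}}^{1/2}\|\partial_t u(\cdot,b)\|_{H^1}^{1/2}$, Young's inequality with parameter $\epsilon$, and the choice $\epsilon = \mu^{-1/2}$ to convert the $(L^2, H^{-1})$-observability with $\mu^{-1}$ remainder into the $(L^2,L^2)$-observability with $\mu^{-1/2}$ remainder; a second interpolation $\|\nabla' u(\cdot,b)\|_{L^2} \le \epsilon\|u(\cdot,b)\|_{H^2} + \epsilon^{-1}\|u(\cdot,b)\|_{L^2}$ with $\epsilon = \mu^{-1/4}$ then produces \eqref{a6.0}. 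These two interpolations are the real technical content of the proof, and they are absent from your proposal; sketching "a Carleman estimate with weight concentrated near $t=b$" or "eigenmode decomposition and Parseval" does not substitute for them, since neither of those routes is carried out and neither directly gives the $\mu^{-1/2}$ and $\mu^{-1/4}$ exponents. You also do not address the uniformity of the constants over $\mathbf{c}_0 \le \mathbf{c} \le \mathbf{c}_1$, which the paper handles by a change of time variable $t \mapsto \mathbf{c}t/\mathbf{c}_0$ and a rescaled operator, tracking the constants in \cite{Ro95}.
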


\begin{proof}
Let $u\in H^3((a,b),H^2(\Omega ))$. Then set $\varphi = u_{|\partial \Omega \times (a,b)}$ and $v=u-E\varphi$. From \cite[Theorem 6.1]{LL} or \cite[theorem1]{Ro95}, there exist three constants $C>0$, $c>0$ and $\mu_0>0$, only depending on $\Omega$, $\kappa$, $\varkappa$ and $\mathbf{c}_0=b-a$, so that, for any $\mu \ge \mu_0$, we have
\begin{align*}
\|v(\cdot ,b)&\|_{L^2(\Omega )}+\|\partial_tv(\cdot ,b)\|_{H^{-1}(\Omega )}
\\
&\le Ce^{c\mu}\left( \|Pu\|_{L^2(\Omega \times (a,b))}+\|PE\varphi \|_{L^2(\Omega \times (a,b))}\right.
\\
&\hskip 2cm \left. +\|\partial _\nu v\|_{L^2(\partial \Omega \times (a,b)}\right) +\frac{C}{\mu} \left(\|v(\cdot ,b)\|_{H^1(\Omega )}+\|\partial_tv(\cdot ,b)\|_{L^2(\Omega )}\right).
\end{align*}

On the other hand,
\begin{align*}
&\left| \|u(\cdot ,b)\|_{L^2(\Omega )}-\|v(\cdot ,b)\|_{L^2(\Omega )}\right|\le \|E\varphi (\cdot ,b)\|_{L^2(\Omega )}\le c_0\|u\|_{H^1((a,b)H^{3/2}(\partial \Omega ))},
\\
&\left| \|u(\cdot ,b)\|_{H^1(\Omega )}-\|v(\cdot ,b)\|_{H^1(\Omega )}\right|\le c_0\|u\|_{H^1((a,b)H^{3/2}(\partial \Omega ))},
\\
&\left|\|\partial _tu(\cdot ,b)\|_{H^{-1}(\Omega )}-\|\partial _tv(\cdot ,b)\|_{H^{-1}(\Omega )}\right|\le c_0\|u\|_{H^2((a,b),H^{3/2}(\partial \Omega ))},
\\
&\left|\|\partial _tu(\cdot ,b)\|_{L^2(\Omega )}-\|\partial _tv(\cdot ,b)\|_{L^2(\Omega )}\right|\le c_0\|u\|_{H^2((a,b),H^{3/2}(\partial \Omega ))},
\\
&\|PE\varphi\|_{L^2(\Omega \times (a,b))}\le C\|E\varphi\|_{H^2(\Omega \times (a,b))}\le C'\|u\|_{H^2((a,b),H^{3/2}(\partial \Omega ))},
\\
&\|\partial _\nu v\|_{L^2(\partial \Omega \times (a,b))}\le \|\partial_\nu u\|_{L^2(\partial \Omega \times (a,b))}+c_0\|u\|_{L^2((a,b),H^{3/2}(\partial \Omega ))}.
\end{align*}
Whence
\begin{align}
&\|u(\cdot ,b)\|_{L^2(\Omega )}+\|\partial_tu(\cdot ,b)\|_{H^{-1}(\Omega )}\label{a2}
\\
&\qquad \le Ce^{c\mu}\left( \|Pu\|_{L^2(\Omega \times (a,b))}+\|u\|_{H^2((a,b),H^{3/2}(\partial \Omega ))}+\|\partial_\nu u\|_{L^2(\partial \Omega \times (a,b)}\right)\nonumber
\\
&\hskip 2cm +\frac{C}{\mu} \left(\|u(\cdot ,b)\|_{H^1(\Omega )}+\|\partial_tu(\cdot ,b)\|_{L^2(\Omega )}\right).\nonumber
\end{align}
But from the usual interpolation inequalities we have
\[
\|\partial_tu(\cdot ,b)\|_{L^2(\Omega )}\le c_1\|\partial_tu(\cdot ,b)\|_{H^{-1}(\Omega )}^{1/2}\|\partial_tu(\cdot ,b)\|_{H^{1}(\Omega )}^{1/2},
\]
the constant $c_1$ only depends on $\Omega$. Hence
\[
C\|\partial_tu(\cdot ,b)\|_{L^2(\Omega )}\le \epsilon \|\partial_tu(\cdot ,b)\|_{H^{1}(\Omega )}+\epsilon ^{-1}\|\partial_tu(\cdot ,b)\|_{H^{-1}(\Omega )},\;\; \epsilon >0.
\]
This in \eqref{a2} yields
\begin{align}
&\|u(\cdot ,b)\|_{L^2(\Omega )}+\|\partial_tu(\cdot ,b)\|_{L^2(\Omega )}\label{a3}
\\
&\quad\le Ce^{c\mu}\epsilon^{-1}\left( \|Pu\|_{L^2(\Omega \times (a,b))}+\|u\|_{H^2((a,b),H^{3/2}(\partial \Omega ))}+\|\partial_\nu u\|_{L^2(\partial \Omega \times (a,b)}\right)\nonumber
\\
&\hskip 1.5cm +\frac{C}{\mu}\epsilon^{-1} \left(\|u(\cdot ,b)\|_{H^1(\Omega )}+\|\partial_tu(\cdot ,b)\|_{L^2(\Omega )}\right)+\epsilon \|\partial_tu(\cdot ,b)\|_{H^1(\Omega )}.\nonumber
\end{align}
We get by taking $\epsilon=\mu ^{-1/2}$ in \eqref{a3} 
\begin{align*}
&\|u(\cdot ,b)\|_{L^2(\Omega )}+\|\partial_tu(\cdot ,b)\|_{L^2(\Omega )}
\\
&\qquad \le Ce^{c\mu}\left( \|Pu\|_{L^2(\Omega \times (a,b))}+\|u\|_{H^2((a,b),H^{3/2}(\partial \Omega ))}+\|\partial_\nu u\|_{L^2(\partial \Omega \times (a,b)}\right)
\\
&\hskip 2cm +\frac{C}{\mu^{1/2}} \left(\|u(\cdot ,b)\|_{H^1(\Omega )}+\|\partial_tu(\cdot ,b)\|_{H^1(\Omega )}\right).
\end{align*}
Thus
\begin{align}
&\|u(\cdot ,b)\|_{L^2(\Omega )}+\|\partial_tu(\cdot ,b)\|_{L^2(\Omega )}\label{a4}
\\
&\qquad \le Ce^{c\mu}\left( \|Pu\|_{L^2(\Omega \times (a,b))}+\|u\|_{H^2((a,b),H^{3/2}(\partial \Omega ))}\right.\nonumber
\\
&\hskip 2cm \left. +\|\partial_\nu u\|_{L^2(\partial \Omega \times (a,b)}\right)+\frac{C}{\mu^{1/2}} \|u\|_{H^2((a,b),H^1(\Omega ))}.\nonumber
\end{align}
Now \eqref{a4} together with \eqref{a4} with $u$ substituted by $\partial_tu$ imply
\begin{align*}
&\|\partial_tu(\cdot ,b)\|_{L^2(\Omega )}+\|\partial_t^2u(\cdot ,b)\|_{L^2(\Omega )}
\\ 
& \quad \le Ce^{c\mu}\left( \|Pu\|_{H^1((a,b),L^2(\Omega ))}+\|u\|_{H^3((a,b),H^{3/2}(\partial \Omega ))}\right.
\\
&\hskip 2cm \left. +\|\partial_\nu u\|_{H^1((a,b); L^2(\partial \Omega ))}\right) +\frac{C}{\mu^{\frac{1}{2}}} \|u\|_{H^3((a,b),H^1(\Omega ))}.
\end{align*}
That is we proved \eqref{a5.0} for $\mathbf{c_0}$.

We get  again from the usual interpolation inequalities
\[
C\|\nabla' u(\cdot ,b)\|_{L^2(\Omega )}\le \epsilon \|u(\cdot ,b)\|_{H^2(\Omega )}+\epsilon^{-1}\|u(\cdot ,b)\|_{L^2(\Omega )}.
\]
This in \eqref{a4} yields
\begin{align*}
&\|\nabla' u(\cdot ,b)\|_{L^2(\Omega )}+\|\nabla' \partial_tu(\cdot ,b)\|_{L^2(\Omega )}
\\
&\qquad \le Ce^{c\mu}\epsilon^{-1}\left( \|Pu\|_{L^2(\Omega \times (a,b))}+\|u\|_{H^2((a,b),H^{3/2}(\partial \Omega ))}+\|\partial_\nu u\|_{L^2(\partial \Omega \times (a,b)}\right)
\\
&\hskip 2cm +\frac{C}{\mu^{1/2}} \epsilon^{-1}\|u\|_{H^2((a,b),H^1(\Omega ))}+\epsilon \|u\|_{H^2((a,b),H^2(\Omega ))} 
\end{align*}
and hence
\begin{align*}
&\|\nabla' u(\cdot ,b)\|_{L^2(\Omega )}+\|\nabla' \partial_tu(\cdot ,b)\|_{L^2(\Omega )}
\\
&\quad\le Ce^{c\mu}\left( \|Pu\|_{L^2(\Omega \times (a,b))}+\|u\|_{H^2((a,b),H^{3/2}(\partial \Omega ))}\right.
\\
&\hskip 2cm \left. +\|\partial_\nu u\|_{L^2(\partial \Omega \times (a,b)}\right)+\frac{C}{\mu^{1/4}} \|u\|_{H^2((a,b),H^2(\Omega ))} .
\end{align*}
This is exactly inequality \eqref{a6.0} for $\mathbf{c}_0$.

Next, let $\mathbf{c}_0\le \mathbf{c}\le \mathbf{c}_1$ and $b-a=\mathbf{c}$. If $u\in H^3((a,b),H^2(\Omega ))$ we set $a_0=\mathbf{c}_0a/\mathbf{c}$, $b_0=\mathbf{c}_0b/\mathbf{c}$ and  $v(x,t)=u(x,\mathbf{c}t/\mathbf{c}_0)$, $(x,t)\in \Omega \times (a_0,b_0)$. In that case we have
\[
\frac{\mathbf{c}^2}{\mathbf{c}_0^2}A(x)v(x,t)-\partial_t^2v(x,t)=\frac{\mathbf{c}^2}{\mathbf{c}_0^2}\left( A(x)u(x,\mathbf{c}t/\mathbf{c}_0)-\partial_tu(x,\mathbf{c}t/\mathbf{c}_0) \right).
\]
Checking carefully the proof of \cite[Theorem 1]{Ro95} we see that \eqref{a5.0} and \eqref{a6.0} can be obtained uniformly with respect to $\mathbf{c}$ for the operator $(\mathbf{c}^2/\mathbf{c}_0^2)A(x)-\partial_t^2$ in the domain $\Omega \times (a_0,b_0)$. These observations allow us to complete the proof.
\end{proof}

We defined in the introduction the following notations
\begin{align*}
&\mathcal{N}(u)=\|u\|_{\mathcal{Y}_T},
\\
&\mathcal{D}(u)=\|Pu\|_{H^1(I_T,L^2(\Omega ))}+\|u\|_{H^{1,1}(\Sigma _T)}+\|\partial_\nu u\|_{L^2(\Sigma_T)},
\\
&\mathfrak{D}_{\overline{\delta}}(u)=\|u\|_{H^3\left(J_{\overline{\delta}/2},H^{3/2}(\partial \Omega )\right)}+\|\partial_\nu u\|_{H^1\left(J_{\overline{\delta}/2}, L^2(\partial \Omega )\right)},
\end{align*}
and set
\[
\tilde{\mathcal{D}}(u)=\sum_{\epsilon\in \{-,+\}}\sum_{k=0}^1\|\nabla \partial _t^ku(\cdot ,\epsilon T)\|_{L^2(\Omega )}.
\]

Let  $T_0>\mathfrak{d}$, $\mathfrak{d}<\sqrt{8}\, \overline{\rho}<T_0$ and $0<\underline{\rho}<\overline{\rho}$. In that case $\overline{\delta}T=\sqrt{8}\, \overline{\rho}>\mathfrak{d}$ for any $T\ge T_0$. Therefore, since $\sqrt{8}\, \underline{\rho}\le \delta T\le \sqrt{8}\, \overline{\rho}$, we deduce from \eqref{a5.0} and \eqref{a6.0}
\begin{equation}\label{a7}
C\tilde{\mathcal{D}}(u)\le e^{c\mu}\left(\mathcal{D}(u)+\mathfrak{D}_{\overline{\delta}}(u)\right)+\mu^{-1/4}\mathcal{N}(u),\quad \mu \ge \mu_0,
\end{equation}
the constants $C$, $c$ and $\mu_0$ only depend on $\Omega$, $T_0$, $\kappa$, $\varkappa$ and $\overline{\rho}$.

This and Theorem \ref{theorem+3.1} give, with $\delta \in \mathfrak{I}$,
\begin{align*}
&C\|u\|_{L^2\left(I_T,H^1(\Omega )\right)} \le \delta^s\mathcal{N}(u) 
\\
&\qquad+ \mathfrak{e}_2(c\delta^{-14})\left(\mathcal{D}(u)+e^{c\mu}(\mathcal{D}(u)+\mathfrak{D}_{\overline{\delta}}(u))+\mu^{-1/4}\mathcal{N}(u)+\mathfrak{D}_{\overline{\delta}}(u)\right),
\end{align*}
the constants $C$, $c$ and $\mu_0$ only depend on $\Omega$, $\kappa$, $\varkappa$, $T_0$, $\underline{\rho}$ and $\overline{\rho}$.

In this inequality, we may substitute $c$ by $\lambda c$ with $\lambda \ge 1$ sufficiently large in such a way that $\overline{\delta}^{-s}\mathfrak{e}_2(\lambda c\overline{\delta}^{-14})>\mu_0^{1/4}$.

We take $\mu\ge \mu_0$ in the preceding inequality so that $\mathfrak{e}_2(c\delta^{-14})\mu^{-\frac{1}{4}}=\delta^s$ in order to obtain 
\[
C\|u\|_{L^2\left(I_T,H^1(\Omega )\right)} \le \delta ^s\mathcal{N}(u) + \mathfrak{e}_3(c\delta^{-14})\left(\mathcal{D}(u)+\mathfrak{D}_{\overline{\delta}}(u) \right).
\]
In other words we proved Theorem \ref{maintheorem}.

A straightforward consequence of Theorem \ref{maintheorem} is the following uniqueness of continuation result.

\begin{corollary}\label{corollary1++}
Let $T_0>\mathfrak{d}$ and $\mathfrak{d}<\sqrt{8}\, \overline{\rho}<T_0$.  If $T\ge T_0$ and  $u\in \mathcal{Y}_T$ satisfies $Pu=0$ in $Q_T$ then $u=\partial_\nu u=0$ on $(J_{\overline{\delta}/2}\times \partial \Omega)\cup \Sigma_T$ implies $u=0$ in $Q_T$.
\end{corollary}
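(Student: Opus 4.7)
The plan is to invoke Theorem \ref{maintheorem} after checking that every data-dependent term on the right-hand side of \eqref{1++} vanishes under the corollary's hypotheses. Since $Pu = 0$ in $Q_T$, immediately $\mathcal{D}(u) = \|Pu\|_{H^1(I_T, L^2(\Omega))} = 0$. Next I unpack $\mathfrak{D}_{\overline{\delta}}(u)$: its four constituent terms $\|u\|_{H^3(J_{\overline{\delta}/2}, H^{3/2}(\partial\Omega))}$, $\|\partial_\nu u\|_{H^1(J_{\overline{\delta}/2}, L^2(\partial\Omega))}$, $\|u\|_{H^{1,1}(\Sigma_T)}$ and $\|\partial_\nu u\|_{L^2(\Sigma_T)}$ each involve $u$ or $\partial_\nu u$ restricted to a subset of $(J_{\overline{\delta}/2}\times\partial\Omega)\cup\Sigma_T$. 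By the assumed vanishing of the Cauchy data on this set, all four terms are zero, so $\mathfrak{D}_{\overline{\delta}}(u) = 0$.

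With these reductions \eqref{1++} collapses to
\[
C\,\|u\|_{L^2(I_T, H^1(\Omega))} \le \delta^{s}\,\mathcal{N}(u),
\]
valid for every $\delta\in[\sqrt{8}\,\underline{\rho}/T,\,\sqrt{8}\,\overline{\rho}/T]$. The crucial observation is that the left-hand side is independent of $\underline{\rho}$, while the parameter $\underline{\rho}\in(0,\overline{\rho})$ is free to choose in the hypotheses of Theorem \ref{maintheorem}. I therefore let $\underline{\rho}\to 0^{+}$ and correspondingly take $\delta=\sqrt{8}\,\underline{\rho}/T\to 0^{+}$; the right-hand side tends to zero and forces $\|u\|_{L^2(I_T, H^1(\Omega))}=0$. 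Since $u\in C^{1,\alpha}(\overline{Q}_T)$ is continuous, this yields $u\equiv 0$ in $Q_T$.

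The main delicate point is the behavior of the constant $C=C(\underline{\rho})$ in the limiting argument above: one must verify that $\delta^{s}/C(\underline{\rho})\to 0$ as $\underline{\rho}\to 0^{+}$. This requires tracing how the constants accumulate through the chain Theorem \ref{theorem-el1} $\to$ Theorem \ref{theorem-el2} $\to$ Proposition \ref{proposition3.1} $\to$ Theorem \ref{maintheorem}, where the dependence on $\underline{\rho}$ enters only through the rescaled ellipticity bounds \eqref{el4}--\eqref{el5}, and is polynomial in $\underline{\rho}^{-1}$. Once this polynomial dependence is checked, the limit passes and the uniqueness conclusion follows.
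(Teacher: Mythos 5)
The reduction of the data terms is correct: $\mathcal{D}(u)=0$ and $\mathfrak{D}_{\overline{\delta}}(u)=0$ under the corollary's hypotheses, so \eqref{1++} collapses to $C\|u\|_{L^2(I_T,H^1(\Omega))}\le\delta^s\mathcal{N}(u)$ for $\delta\in[\sqrt{8}\,\underline{\rho}/T,\sqrt{8}\,\overline{\rho}/T]$. The gap lies in your limiting argument. You send $\underline{\rho}\to 0^+$ (so that $\delta\to 0^+$) and assert that $C(\underline{\rho})$ degrades at most polynomially in $\underline{\rho}^{-1}$. That is neither verified nor plausible. The dependence of $C$ on $\underline{\rho}$ enters through the rescaled ellipticity bounds \eqref{el4}--\eqref{el5}, where the effective ellipticity and modulus-of-continuity constants grow like $\underline{\rho}^{-2}$. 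These feed into the Carleman estimate (Theorem \ref{theorem1.1}), hence into the three-ball inequality (Theorem \ref{theorem1.2}) whose exponent $\gamma$ tends to $1^-$ as the ellipticity degenerates, and then into the iterated estimate of Lemma \ref{lemma1.1+}, whose constant is $(2c)^{1/(1-\gamma)}$. This is already far worse than polynomial; on top of that, Propositions \ref{proposition1.1} and \ref{proposition1.3} introduce $e^{c/\epsilon}$ factors whose $c$ itself depends on those degenerating quantities. Even granting a polynomial bound $C\gtrsim\underline{\rho}^m$, you would need $m<s<1/2$ for $\delta^s/C(\underline{\rho})\to 0$, which you cannot arrange because $m$ is not at your disposal. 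So the limit cannot be passed as you propose.

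The way to make the argument rigorous is to send $T\to\infty$ instead of $\underline{\rho}\to 0^+$, since the constants in \eqref{1++} depend on $T_0$ but not on $T$. To do this, first prove that $u$ vanishes near $t=\pm T$. Apply \eqref{a7} with $\mathcal{D}(u)=\mathfrak{D}_{\overline{\delta}}(u)=0$ and let $\mu\to\infty$: this gives $\tilde{\mathcal{D}}(u)=0$, i.e. $\nabla\partial_t^k u(\cdot,\pm T)=0$ for $k=0,1$. Insert this into Lemma \ref{lemma3.5}: all right-hand terms vanish (the $Pu$ term because $Pu=0$, the boundary terms because $u=\partial_\nu u=0$ on $\partial\Omega\times J_{\overline{\delta}/2}$, and the end-time terms because $\tilde{\mathcal{D}}(u)=0$), so $u\equiv 0$ on $\Omega\times J_{\overline{\delta}/2}$. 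Since $u$ vanishes on an open time band near $\pm T$, the extension $\tilde u$ of $u$ by zero to $Q_{T'}$, $T'>T$, belongs to $\mathcal{Y}_\alpha(Q_{T'})$, satisfies $P\tilde u=0$, and its Cauchy data on $\Sigma_{T'}\cup(J'_{\overline{\delta}'/2}\times\partial\Omega)$ vanishes once $T'>T+\sqrt{8}\,\overline{\rho}/2$. Now fix $\underline{\rho}$ once and for all; Theorem \ref{maintheorem} applied to $\tilde u$ on $Q_{T'}$ gives $C\|u\|_{L^2(I_T,H^1(\Omega))}=C\|\tilde u\|_{L^2(I_{T'},H^1(\Omega))}\le\bigl(\sqrt{8}\,\underline{\rho}/T'\bigr)^{s}\mathcal{N}(\tilde u)$ with $C$ independent of $T'$, and $\mathcal{N}(\tilde u)$ bounded uniformly in $T'$. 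Letting $T'\to\infty$ gives $u=0$.
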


Similarly to the elliptic case, we deduce from Theorem \ref{maintheorem} a stability estimate. Let $\varrho_\ast =e^{-e}$ and define, for $\beta >0$ and $\varrho_0\le \varrho_\ast$,
\[
\Psi_{\varrho_0,\beta}(\varrho )=
\left\{
\begin{array}{ll}
(\ln \ln  |\ln \varrho |)^{-\beta } \quad &\mbox{if}\; 0<\varrho\le \varrho_0 ,
\\
\varrho &\mbox{if}\; \varrho \ge \varrho_0.
\end{array}
\right.
\]

\begin{corollary}\label{corollary2++}
Let $s\in (0,1/2)$, $\mathfrak{b}>0$ and assume that $T_0>\mathfrak{d}$, $\mathfrak{d}<\sqrt{8}\, \overline{\rho}<T_0$ and $0<\underline{\rho}<\overline{\rho}$. Then there exists a constant $\hat{T}\ge T_0$, only depending of $\Omega$, $\kappa$, $\varkappa$, $\underline{\rho}$, $\overline{\rho}$, $s$, $\alpha$ and $\mathfrak{b}$, so that, for any $T\ge \hat{T}$, $u\in\mathcal{Y}_T$ satisfying $u\ne 0$ and 
\[
\frac{\mathcal{D}(u)+\mathfrak{D}_{\overline{\delta}}(u)}{\mathcal{N}(u)}\le \mathfrak{b},
\]
we have
\[
C\|u\|_{L^2\left(I_T,H^1(\Omega )\right)}\le \mathcal{N}(u)\Psi_{\tilde{\rho},s/8}\left( \mathfrak{b} \right),
\] 
where $C>0$ and $0<\tilde{\varrho}\le  \varrho_\ast$ depend only of $\Omega$, $\kappa$, $\varkappa$, $\underline{\rho}$, $\overline{\rho}$, $s$, $\alpha$,  $\mathfrak{b}$ and $T$.
\end{corollary}

\begin{proof}
 Let $C>0$ and $c>0$ be the constants in Theorem \ref{maintheorem}. We recall that $C$ and $c$ only depend of  $\Omega$, $T_0$, $\kappa$, $\varkappa$, $\underline{\rho}$, $\overline{\rho}$, $\alpha$ and $s$.
 
 Define the function $\ell$ by $\ell (\delta)=\delta ^s e^{-\mathfrak{e}_3(c\delta^{-14})}$. Since $\ell(\underline{\delta})$ is a decreasing function of $T$ and $\ell(\underline{\delta})$ converges to $0$ as $T$ goes to $\infty$, we find $\hat{T}\ge T_0$ so that
 \[
 \ell(\underline{\delta})<\min (\mathfrak{b},\varrho_\ast),\quad T\ge \hat{T}.
 \]
 
 For fixed $T\ge \hat{T}$, if $\mathfrak{b}<\min (\ell (\overline{\delta}) , \varrho_\ast)=\tilde{\varrho}$ then there exists $\underline{\delta}\le\tilde{\delta}\le \overline{\delta}$ so that $\ell (\tilde{\delta})=\mathfrak{b}$. Therefore, where $\hat{c}>0$ is a constant only depending of $\Omega$, $\kappa$, $\varkappa$, $\underline{\rho}$, $\overline{\rho}$, $s$, $\alpha$,  $\mathfrak{b}$ and $T$,
\[
\frac{1}{\mathfrak{b}}\le \mathfrak{e}_3(\hat{c}\tilde{\delta}^{-14}).
\]
Or equivalently
\[
\tilde{\delta}\le [\hat{c}\ln \ln |\ln \frak{b}|]^{-1/14}.
\]

Pick $u\in \mathcal{Y}_T$, $u\ne 0$, set
\[
\mathfrak{a}=\frac{\|u\|_{L^2\left(I_T,H^1(\Omega )\right)}}{\mathcal{N}(u)}
\]
and assume that
\[
\frac{\mathcal{D}(u)+\mathfrak{D}_{\overline{\delta}}(u)}{\mathcal{N}(u)}\le \mathfrak{b}.
\]
In light of \eqref{1++} we get
\begin{equation}\label{2++}
C\mathfrak{a} \le \delta ^s+ \mathfrak{e}_3(c\delta^{-14})\mathfrak{b} ,\quad \delta \in \mathfrak{I} ,
\end{equation}

Then it follows readily by taking $\delta=\tilde{\delta}$ ($\in  \mathfrak{I}$) in \eqref{2++} that
\begin{equation}\label{3++}
\hat{C}\mathfrak{a}\le [\ln \ln |\ln \mathfrak{b}|]^{-s/14},
\end{equation}
the constant $\hat{C}>0$ only depends $\Omega$, $\kappa$, $\varkappa$, $\underline{\rho}$, $\overline{\rho}$, $s$, $\alpha$,  $\mathfrak{b}$ and $T$.

If $\mathfrak{b}\ge \tilde{\varrho}$ then obviously we have
\begin{equation}\label{4++}
\mathfrak{a}\le 1\le \frac{\mathfrak{b}}{\tilde{\varrho}}.
\end{equation}
The result follows then from \eqref{3++} and \eqref{4++}.
\end{proof}

\section{Comments}

As before $\Gamma$ is a nonempty open subset of $\partial \Omega$. Define then
\[
\mathfrak{d}_\Gamma =\sup_{y\in \Omega}\inf_{x\in \Gamma} d(x,y),
\]
with
\begin{align*}
d(x,y)=\inf\{L(\gamma ); \gamma :[0,1]&\rightarrow \overline{\Omega}
\\
& C^1\mbox{-piecewise path so that}\; \gamma (0)=x,\; \gamma (1)=y\},
\end{align*}
where $L(\gamma)$ denotes the length of $\gamma$.

Similarly to the proof Proposition \ref{proposition3.2}, we get, by applying \cite[Theorem 6.1]{LL} and without using an extension operator, the following result.

\begin{proposition}\label{proposition-com1}
Let $\mathbf{c}_1>\mathbf{c}_0>\mathfrak{d}_\Gamma$. There exist three constants $C>0$, $c>0$ and $\mu_0>0$, only depending on $\Omega$, $\Gamma$, $\kappa$, $\varkappa$, so that, for any $\mathbf{c}_0\le \mathbf{c}\le \mathbf{c}_1$, $b-a=\mathbf{c}$, $\mu \ge \mu_0$ and $u\in H^3((a,b),H^2(\Omega )\cap H_0^1(\Omega ))$, we have
\begin{align*}
\|\partial_tu(\cdot ,b)\|_{L^2(\Omega )}&+\|\partial_t^2u(\cdot ,b)\|_{L^2(\Omega )}
\\ 
& \le Ce^{c\mu}\left( \|Pu\|_{H^1((a,b),L^2(\Omega ))}+\|\partial_\nu u\|_{H^1((a,b); L^2(\Gamma ))}\right)
\\ 
&\hskip 5cm +\frac{C}{\mu^{1/2}} \|u\|_{H^3((a,b),H^1(\Omega ))}
\end{align*}
and
\begin{align*}
\|\nabla' u(\cdot ,b)\|_{L^2(\Omega )}&+\|\nabla' \partial_tu(\cdot ,b)\|_{L^2(\Omega )}
\\
&\le Ce^{c\mu}\left( \|Pu\|_{L^2(\Omega \times (a,b))}+\|\partial_\nu u\|_{L^2(\Gamma \times (a,b)}\right)
\\
&\hskip 4cm +\frac{C}{\mu^{1/4}} \|u\|_{H^2((a,b),H^2(\Omega ))} .
\end{align*}
\end{proposition}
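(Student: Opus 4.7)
The plan is to mirror the proof of Proposition \ref{proposition3.2}, but exploit the hypothesis $u\in H_0^1(\Omega)$ (uniformly in $t$) so that no extension operator is needed. Since $u$ already vanishes on $\partial\Omega$, we can apply the observability-type estimate of \cite[Theorem 6.1]{LL} or \cite[Theorem 1]{Ro95} directly to $u$, with observation on $\Gamma\times(a,b)$ (this is allowed because $\mathbf{c}_0>\mathfrak{d}_\Gamma$). This gives, for $\mu\ge \mu_0$,
\[
\|u(\cdot,b)\|_{L^2(\Omega)}+\|\partial_t u(\cdot,b)\|_{H^{-1}(\Omega)}\le Ce^{c\mu}\bigl(\|Pu\|_{L^2(\Omega\times(a,b))}+\|\partial_\nu u\|_{L^2(\Gamma\times(a,b))}\bigr)+\frac{C}{\mu}\bigl(\|u(\cdot,b)\|_{H^1(\Omega)}+\|\partial_t u(\cdot,b)\|_{L^2(\Omega)}\bigr).
\]

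Next I would upgrade $H^{-1}$ to $L^2$ via the same interpolation step used in Proposition \ref{proposition3.2}: from
\[
\|\partial_t u(\cdot,b)\|_{L^2(\Omega)}\le c_1\|\partial_t u(\cdot,b)\|_{H^{-1}(\Omega)}^{1/2}\|\partial_t u(\cdot,b)\|_{H^{1}(\Omega)}^{1/2},
\]
a Young inequality with parameter $\epsilon=\mu^{-1/2}$ absorbs the $H^{1}$ contribution into the lower-order term $\mu^{-1/2}\|u\|_{H^2((a,b),H^1(\Omega))}$, yielding the $L^2$ bound for $\partial_t u(\cdot,b)$. Applying the same estimate with $u$ replaced by $\partial_t u$ (which still lies in $H_0^1(\Omega)$ for a.e.\ $t$, since the trace commutes with $\partial_t$) then produces the $\|\partial_t^2 u(\cdot,b)\|_{L^2(\Omega)}$ bound, completing the first announced inequality. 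For the second inequality I would use the interpolation $\|\nabla' u(\cdot,b)\|_{L^2(\Omega)}\le C\|u(\cdot,b)\|_{L^2(\Omega)}^{1/2}\|u(\cdot,b)\|_{H^2(\Omega)}^{1/2}$, then the same Young trick with $\epsilon=\mu^{-1/4}$, and apply once more to $\partial_t u$ in place of $u$.

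Finally, I would carry out the change of time variable $v(x,t)=u(x,\mathbf{c}t/\mathbf{c}_0)$, $(x,t)\in \Omega\times(a_0,b_0)$ with $a_0=\mathbf{c}_0 a/\mathbf{c}$, $b_0=\mathbf{c}_0 b/\mathbf{c}$, exactly as in Proposition \ref{proposition3.2}. Since $u\in H_0^1(\Omega)$ for a.e.\ $t$, so is $v$, and $v$ solves a wave equation with the time-rescaled operator $(\mathbf{c}^2/\mathbf{c}_0^2)\mathrm{div}(A\nabla\,\cdot\,)-\partial_t^2$; the coefficients satisfy the same ellipticity and regularity bounds on the compact range $\mathbf{c}\in[\mathbf{c}_0,\mathbf{c}_1]$. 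Inspecting the dependence of constants in \cite[Theorem 1]{Ro95} (as was done in Proposition \ref{proposition3.2}) shows that the constants $C$, $c$, $\mu_0$ can be taken uniform in $\mathbf{c}\in[\mathbf{c}_0,\mathbf{c}_1]$, after which the inequalities pull back to $u$ on $\Omega\times(a,b)$ with a harmless $\mathbf{c}$-dependent factor absorbed into $C$.

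The main obstacle is bookkeeping rather than substance: one must verify that the Robbiano/Lebeau--Le\,Rousseau observability inequality indeed applies in the half-Cauchy-data regime (Dirichlet datum zero, Neumann observation on $\Gamma$) under the geodesic-type condition $\mathbf{c}_0>\mathfrak{d}_\Gamma$, and that the quantitative constants behave continuously under the time scaling so that uniformity over $[\mathbf{c}_0,\mathbf{c}_1]$ is preserved. Once these two points are checked, the rest of the proof is a mechanical repetition of the interpolation/Young argument from Proposition \ref{proposition3.2}.
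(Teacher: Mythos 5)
Your proposal is exactly the argument the paper intends: Section 5 states that Proposition \ref{proposition-com1} follows \emph{``Similarly to the proof of Proposition \ref{proposition3.2}, \ldots by applying \cite[Theorem 6.1]{LL} and without using an extension operator,''} which is precisely your plan of dropping the lift $E\varphi$ (since $u(\cdot,t)\in H_0^1(\Omega)$), invoking the Laurent--L\'eautaud / Robbiano observability estimate directly with Neumann observation on $\Gamma$, and repeating the $H^{-1}\to L^2$ and $L^2\to H^1$ interpolation/Young steps with $\epsilon=\mu^{-1/2}$, $\epsilon=\mu^{-1/4}$ and the time rescaling for uniformity over $\mathbf{c}\in[\mathbf{c}_0,\mathbf{c}_1]$. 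No substantive difference from the paper's (unwritten but clearly indicated) proof.
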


The following notations were already used in the previous sections
\begin{align*}
&\mathcal{N}(u)=\|u\|_{\mathcal{Y}_T},
\\
&\mathcal{D}(u)=\|Pu\|_{H^1(I_T,L^2(\Omega ))}+\|u\|_{H^{1,1}(\Sigma _T)}+\|\partial_\nu u\|_{L^2(\Sigma_T)},
\end{align*}

In light of Proposition \ref{proposition-com1}, we get by mimicking the proof of Theorem \ref{maintheorem} the following result.
\begin{theorem}\label{theorem-com1}
Let $s\in (0,1/2)$, $\Gamma$ be a nonempty open subset of $\partial \Omega$, $T_0>\mathfrak{d}_\Gamma$, $\mathfrak{d}_\Gamma <\sqrt{8}\, \overline{\rho}<T_0$ and $0<\underline{\rho}<\overline{\rho}$. There exist two constants $C>0$ and $c>0$, only depending on $\Omega$, $T_0$, $\kappa$, $\varkappa$, $\underline{\rho}$, $\overline{\rho}$, $s$ and $\alpha$, so that, for any  $T\ge T_0$, $u\in \mathcal{Y}_T$ satisfying $u=\partial_\nu u=0$ on $J_{\overline{\delta}/2}\times \partial \Omega$ and $\delta \in [\sqrt{8}\, \underline{\rho}/T,\sqrt{8}\,\overline{\rho}/T]$, we have
\[
C\|u\|_{L^2\left(I_T,H^1(\Omega )\right)} \le \delta ^s\mathcal{N}(u) + \mathfrak{e}_3\left(c\delta^{-14}\right)\mathcal{D}(u) .
\]
\end{theorem}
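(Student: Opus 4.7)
The plan is to mirror, step by step, the four-stage derivation of Theorem \ref{maintheorem}, replacing every full-lateral-boundary ingredient by its $\Gamma$-localized analogue and using the hypothesis $u=0$ on $J_{\overline{\delta}/2}\times\partial\Omega$ to kill the boundary contributions that live on $\partial\Omega\setminus\Gamma$.

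First, fix $t_0\in I_{(1-\delta)T}$ and, for $\lambda$ large, set $v_\lambda=\mathscr{F}_{\lambda,t_0}u$ on $Q_{\delta T/\sqrt{8}}$. By Lemmas \ref{lemma3.1}--\ref{lemma3.3} (used exactly as in the proof of Proposition \ref{proposition3.1}) the function $v_\lambda$ satisfies $Lv_\lambda=\mathscr{F}_{\lambda,t_0}(Pu)+w_\lambda$ with the quantitative residual already recorded there. Apply Theorem \ref{theorem-el2} with $\rho=\delta T/\sqrt{8}\in[\underline{\rho},\overline{\rho}]$ and with the Cauchy piece $\mathcal{C}$ taken to be the \emph{$\Gamma$-strip} $\Gamma\times I_{\delta T/\sqrt{8}}$, which is an open subset of $\partial Q_{\delta T/\sqrt{8}}$. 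Using Lemma \ref{lemma3.1}(ii) and Lemma \ref{lemma3.3} to push $\|v_\lambda\|_{L^2(\mathcal{C})}+\|\nabla v_\lambda\|_{L^2(\mathcal{C})}$ back to $\|u\|_{L^2(\Sigma_T)}+\|\nabla u\|_{L^2(\Sigma_T)}$, one produces exactly the $\hat{\mathfrak{D}}(u)$-type contribution plus the standard FBI-residual $\|u\|_{H^1(J_{\delta/2},L^2(\Omega))}$, and nothing else from $\partial\Omega\setminus\Gamma$.

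Second, invoke Lemma \ref{lemma3.4} to revert from $v_\lambda$ to $u(\cdot,t_0)$, integrate in $t_0$ over $I_{(1-\delta)T}$, and close the missing small slab $J_{(1-\delta)T}$ by Lemma \ref{lemmaH}, exactly as in Corollary \ref{corollary3.1}. This yields the analogue of Theorem \ref{theorem+3.1}: the only residuals outside of $\delta^s\mathcal{N}(u)$, $\mathcal{D}(u)$ and $\hat{\mathfrak{D}}(u)$ are the interior term $\|u\|_{H^1(J_{\overline{\delta}/2},L^2(\Omega))}$ and the end-point term $\tilde{\mathcal{D}}(u)$. To handle the interior term, redo the energy computation of Lemma \ref{lemma3.5} on the slab $J_{\overline{\delta}/2}$; crucially, the vanishing hypothesis makes the lateral integrals $\int u\,\partial_\nu u$ identically zero, so the resulting inequality
\[
e^{-c\delta^{-1}}\|u\|_{H^1(J_{\overline{\delta}/2},L^2(\Omega))}\le \|Pu\|_{H^1(J_{\overline{\delta}/2},L^2(\Omega))}+\tilde{\mathcal{D}}(u)
\]
is free of any norm on $\partial\Omega\setminus\Gamma$.

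Third, bound the end-point data $\tilde{\mathcal{D}}(u)$ by Proposition \ref{proposition-com1}, which is tailored precisely to the $H^2\cap H_0^1$-valued setting on sub-intervals near $\pm T$ that lie inside $J_{\overline{\delta}/2}$ (the condition $\mathfrak{d}_\Gamma<\sqrt{8}\,\overline{\rho}$ ensures such an interval of length $>\mathfrak{d}_\Gamma$ is available). One obtains $\tilde{\mathcal{D}}(u)\le Ce^{c\mu}(\mathcal{D}(u)+\hat{\mathfrak{D}}(u))+C\mu^{-1/4}\mathcal{N}(u)$ for $\mu\ge\mu_0$. Plugging this back, choosing $\lambda=16\delta^{-16}/T^2$ to align the elliptic exponent with the FBI one, then $\epsilon$ so that $\epsilon^\beta=e^{-2c_0\delta^{-14}}$ (which produces the double exponential $\mathfrak{e}_2$), and finally $\mu$ so that $\mathfrak{e}_2(c\delta^{-14})\mu^{-1/4}=\delta^s$ (which bumps it up to $\mathfrak{e}_3$) yields the stated estimate.

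The main obstacle will be the bookkeeping in the third step: the first inequality of Proposition \ref{proposition-com1} calls for $\|\partial_\nu u\|_{H^1((a,b),L^2(\Gamma))}$, a term slightly stronger than the $\|\partial_\nu u\|_{L^2(\Sigma_T)}$ sitting in $\hat{\mathfrak{D}}(u)$. The natural remedy is to split this piece via the trace regularity built into $\mathcal{Y}_\alpha(Q_T)$ (the embedding $H^2(I_T,H^2(\Omega))\hookrightarrow H^1(I_T,H^{3/2}(\partial\Omega))$), and absorb the excess into $\mathcal{N}(u)$; the $\mu$-optimization has exactly $\delta^s$ worth of slack left to do so without damaging the final estimate. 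Everything else — the uniformity of Theorem \ref{theorem-el2} in $\rho$, the $\Gamma$-localized FBI estimates, and the $\mu$--$\lambda$--$\epsilon$ matching — is imported verbatim from the proof of Theorem \ref{maintheorem}.
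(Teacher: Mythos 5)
Your proof proposal tracks the paper's route exactly: the authors only say ``by mimicking the proof of Theorem \ref{maintheorem}, using Proposition \ref{proposition-com1} in place of Proposition \ref{proposition3.2},'' and your four stages are a faithful expansion of that one-line instruction. Stages one and two (FBI transform plus Theorem \ref{theorem-el2}, then the energy estimate of Lemma \ref{lemma3.5} simplified by the vanishing of $u$ on $J_{\overline{\delta}/2}\times\partial\Omega$) are correct; in particular you are right that with $u=0$ near the endpoints the lateral boundary integral in the energy identity disappears and the Poincar\'e norm-equivalence step becomes trivial.

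You have also correctly put your finger on the one place where the paper's ``mimicking'' instruction is not completely harmless: the first estimate of Proposition \ref{proposition-com1} (obtained by applying \eqref{a4} to both $u$ and $\partial_t u$) genuinely calls for $\|\partial_\nu u\|_{H^1(J_{\overline{\delta}/2}, L^2(\Gamma ))}$, which is not contained in $\hat{\mathfrak{D}}(u)=\|u\|_{H^{1,1}(\Sigma_T)}+\|\partial_\nu u\|_{L^2(\Sigma_T)}$, whereas in the original argument the analogous term is explicitly a part of $\mathfrak{D}_{\overline{\delta}}(u)$. This is a real discrepancy in the paper's stated conclusion, not a false alarm. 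However, the remedy you propose does not close it. Bounding $\|\partial_\nu u\|_{H^1}\le C\mathcal{N}(u)$ via trace regularity places this quantity inside the bracket that is multiplied by $e^{c\mu}$, so after the final combination you acquire a term $\mathfrak{e}_2(c\delta^{-14})\,e^{c\mu}\,\mathcal{N}(u)$; since $\mu\ge\mu_0>0$, this is at least $\mathfrak{e}_2(c\delta^{-14})\,\mathcal{N}(u)$, which cannot be absorbed into $\delta^s\mathcal{N}(u)$. The $\mu$-optimization you invoke fixes the $\mathfrak{e}_2(c\delta^{-14})\mu^{-1/4}\mathcal{N}(u)$ piece and leaves \emph{no} slack for an additional $e^{c\mu}\mathcal{N}(u)$ contribution.

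What does work is to introduce a second free parameter by interpolation: since $\partial_\nu u$ lies in $H^2(J_{\overline{\delta}/2}, L^2(\Gamma))$ with norm $\lesssim\mathcal{N}(u)$ (from $u\in H^2(I_T,H^2(\Omega))$), you have
\[
\|\partial_\nu u\|_{H^1(J_{\overline{\delta}/2}, L^2(\Gamma))}
\le \eta\,\mathcal{N}(u)+C\eta^{-1}\|\partial_\nu u\|_{L^2(\Sigma_T)}
\le \eta\,\mathcal{N}(u)+C\eta^{-1}\hat{\mathfrak{D}}(u)
\]
for every $\eta>0$. Feeding this into the $\tilde{\mathcal{D}}(u)$-estimate produces the coefficient $\mathfrak{e}_2(c\delta^{-14})\bigl(e^{c\mu}\eta+\mu^{-1/4}\bigr)$ in front of $\mathcal{N}(u)$ and $\mathfrak{e}_2(c\delta^{-14})\,e^{c\mu}\bigl(1+\eta^{-1}\bigr)$ in front of $\hat{\mathfrak{D}}(u)$. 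Choosing first $\mu$ so that $\mathfrak{e}_2(c\delta^{-14})\mu^{-1/4}=\delta^s$ and then $\eta$ so small that $\mathfrak{e}_2(c\delta^{-14})e^{c\mu}\eta\le\delta^s$ keeps the $\mathcal{N}(u)$-coefficient at $O(\delta^s)$, while the $\eta^{-1}$ merely changes the constant inside $\mathfrak{e}_3(c\delta^{-14})$; the final estimate is then exactly the one claimed. Apart from this misidentification of the absorption mechanism, your proof matches the paper's intended argument.
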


This theorem has to be compared with the following one which is contained in \cite[Theorem 6.3]{LL}.
\begin{theorem}\label{theorem-com2}
Let $\Gamma$ be a nonempty open subset of $\partial \Omega$ and  $T>\mathfrak{d}_\Gamma$. There exist four constants $C>0$, $c>0$, $\mu_0>0$
 and $0<\tilde{T}<T$, only depending on $\Omega$, $T$, $\kappa$, $\varkappa$, so that, for any  $u\in H^1(I_T,H^2(\Omega )\cap H_0^1(\Omega ))$ and $\mu \ge \mu_0$, we have
\[
C\|u\|_{L^2(Q_{\tilde{T}})} \le Ce^{c\mu}\left(\|\partial_\nu u\|_{L^2(\Sigma_T)}+\|Pu\|_{L^2(Q_T)}\right)+\frac{C}{\mu}\|u\|_{H^1(Q_T)}.
\]
\end{theorem}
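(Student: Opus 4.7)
The plan is to derive this inequality directly from a global Carleman estimate for the wave operator $P$ on $Q_T$, tailored to the geometric control condition $T>\mathfrak{d}_\Gamma$. The first step is to construct a weight $\psi\in C^2(\overline{Q}_T)$ with three key properties: \emph{(i)} there should exist $\tilde T\in(0,T)$ and constants $\psi_0<\psi_1$ with $\psi\ge\psi_1$ on $Q_{\tilde T}$ and $\psi\le\psi_0$ near $\Omega\times\{\pm T\}$; \emph{(ii)} $\psi$ should be strongly pseudoconvex with respect to the principal symbol of $P$ on $\overline{Q}_T$; \emph{(iii)} $\partial_\nu\psi\le 0$ on $(\partial\Omega\setminus\Gamma)\times I_T$. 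The hypothesis $T>\mathfrak{d}_\Gamma$ is precisely what allows such a $\psi$ to be built: one can take $\psi(x,t)=d_A(x,x_\ast)^2-\beta t^2+M$, where $d_A$ denotes the Riemannian distance attached to the metric induced by $A$, $x_\ast$ is suitably chosen outside $\overline{\Omega}$ so that the geodesic rays from $x_\ast$ exit $\Omega$ through $\Gamma$, and $\beta\in(0,1)$ is tuned so that both pseudoconvexity and the correct temporal decay hold.

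With such a $\psi$ in hand, the standard conjugation argument---writing $w=e^{\mu\psi}v$, splitting $e^{\mu\psi}Pe^{-\mu\psi}$ into its self-adjoint and skew-adjoint parts and integrating $\|e^{\mu\psi}Pv\|_{L^2(Q_T)}^2$ by parts twice---will yield the Carleman estimate
\begin{equation*}
\mu\int_{Q_T}e^{2\mu\psi}\bigl(|\nabla' v|^2+|\partial_tv|^2+\mu^2|v|^2\bigr)dx\,dt\le C\int_{Q_T}e^{2\mu\psi}|Pv|^2\,dx\,dt+C\mu\int_{\Sigma_T}e^{2\mu\psi}|\partial_\nu v|^2\,d\sigma\,dt
\end{equation*}
for smooth $v$ vanishing on $\partial\Omega\times I_T$ and $\mu\ge\mu_0$. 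The Dirichlet condition $v=0$ on $\partial\Omega\times I_T$ eliminates the tangential-gradient boundary contributions, and property \emph{(iii)} ensures that the remaining boundary terms on $(\partial\Omega\setminus\Gamma)\times I_T$ carry the good sign and can be discarded, so that only the $\Gamma$-integral survives. A density argument then extends the inequality to $v\in H^1(I_T,H^2\cap H_0^1)$. I expect the construction of $\psi$ to be the main obstacle: reconciling simultaneously interior pseudoconvexity, the sign condition on the unobserved part of the lateral boundary, and the correct behaviour near $\pm T$ under the sole assumption $T>\mathfrak{d}_\Gamma$ is the delicate geometric-control ingredient.

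To conclude, I would localize the estimate in time. Using property \emph{(i)}, on $Q_{\tilde T}$ one has $e^{2\mu\psi}\ge e^{2\mu\psi_1}$, while on all of $Q_T$ one has $e^{2\mu\psi}\le e^{2\mu\|\psi\|_\infty}$. Keeping only the $\mu^3|u|^2$ contribution on the left-hand side, restricting the integration domain to $Q_{\tilde T}$ and collecting the exponential factors, one obtains
\[
\|u\|_{L^2(Q_{\tilde T})}\le Ce^{c\mu}\bigl(\|Pu\|_{L^2(Q_T)}+\|\partial_\nu u\|_{L^2(\Sigma_T)}\bigr)+R_\mu,
\]
with $R_\mu\le C\mu^{-1}\|u\|_{H^1(Q_T)}$ absorbing the residual from the first-order part of $P$ (handled via Cauchy--Schwarz and absorption into the principal $\mu$-gain of the estimate) and from the regularization step used to justify the identity for $u$ in the stated energy class. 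Relabelling $c$ and $C$ then yields exactly the announced inequality.
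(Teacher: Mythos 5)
This theorem is not proved in the paper: the authors state explicitly that it ``is contained in'' \cite[Theorem 6.3]{LL}, i.e.\ Theorem \ref{theorem-com2} is a citation, not a result they establish themselves. Your proposal therefore does not reproduce the paper's argument; it substitutes an entirely different strategy, and that strategy has a fundamental obstruction.

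The obstruction is in your step \emph{(ii)}. For a general anisotropic wave operator $P=\mathrm{div}(A\nabla\cdot)-\partial_t^2$ whose coefficient matrix $A\in W^{1,\infty}$ is only assumed to satisfy the two-sided bound \eqref{main1} and the Lipschitz bound \eqref{main2}, one cannot in general construct a weight $\psi$ of the form $d_A(x,x_\ast)^2-\beta t^2+M$ that is strongly pseudoconvex with respect to the principal symbol of $P$ on all of $\overline{Q}_T$. Strong pseudoconvexity for a hyperbolic operator imposes genuine curvature-type constraints on $A$ (see \cite{Ya}, and note that Section 6 of this very paper introduces the ``smallness'' hypothesis $\kappa\varkappa\mathfrak{d}_0<1$ precisely because a multiplier/Carleman-type argument fails without such an extra assumption). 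In fact, without restrictions beyond ellipticity, Carleman-weight unique continuation across time-like surfaces can break down (Alinhac--Baouendi-type nonuniqueness), and the reason unique continuation nevertheless holds here is that the coefficients are independent of $t$, hence analytic in $t$. That is exactly the structure Laurent and L\'eautaud exploit in \cite{LL}: they quantify the Tataru--Robbiano--Zuily--H\"ormander unique continuation theorem for operators with partially analytic coefficients, using a (carefully localized) Fourier--Bros--Iagolnitzer transform to reduce to an elliptic estimate in the analytic directions. Your sketch bypasses this essential mechanism, so even if the rest of the bookkeeping (conjugation, sign of the boundary term on $(\partial\Omega\setminus\Gamma)\times I_T$, absorption of the lower-order residual into $C\mu^{-1}\|u\|_{H^1(Q_T)}$) were carried through, the starting Carleman estimate would simply not be available under the stated hypotheses. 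A further, more minor, mismatch: $\mathfrak{d}_\Gamma$ in the statement is defined via Euclidean path length in $\overline{\Omega}$, whereas your weight uses the $A$-metric distance $d_A$, so the time threshold $T>\mathfrak{d}_\Gamma$ would not emerge from your construction even when a suitable $\psi$ exists.
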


It is worth mentioning that we have a variant of Theorem \ref{theorem-com1} for functions $u\in \mathcal{Y}_T$ satisfying $u=\partial_\nu u=0$ on $J_{\overline{\delta}/2}\times (\partial \Omega\setminus \Gamma_0 )$, $\Gamma_0\Subset \Gamma$. This follows by substituting in the proof of Proposition \ref{proposition3.2} $v=u-E\varphi$ by $v=u-E\chi \varphi$ with $\chi \in C_0^\infty (\mathbb{R}^n)$ satisfying $\mbox{supp}\chi\cap \partial \Omega \subset \Gamma$ and $\chi =1$ on $\Gamma_0$.

\section{A particular case}

We aim in this section to improve the result of Theorem \ref{maintheorem}. We precisely show that, under a ``smallness'' condition on the coefficients of the matrix $A$, the term $\mathfrak{e}_3(c\delta^{-14})$ in the inequality of Theorem \ref{maintheorem} can be improved by $\mathfrak{e}_2(c\delta^{-14})$. In other words we have a better stability result.

The analysis we carry out in this section consists in an adaptation of known ideas used to establish observability inequalities via the multiplier method for the wave equation $\partial_t^2-\Delta$. We refer for instance to \cite[Subsection 3.1, page 35]{Ko}. In the variable coefficients case similar approach was used in \cite{Ya} with a geometric viewpoint, i.e  by considering the metric generated by $(a^{ij})$. The condition on $A$ is then interpreted in term of the corresponding sectional curvature.

We start by establishing an identity involving a usual multiplier. To this end, fix $x_0\in \overline{\Omega}$ arbitrary and let $m(x)=x-x_0$. 

Let $u\in H^2(\Omega \times (a,b))$ satisfying $u=0$ on $\partial \Omega \times (a,b)$. We find by making an integration by parts
\begin{align}
\int_\Omega \mbox{div}(A\nabla' u)(m\cdot \nabla' u)dx=&-\int_\Omega A\nabla' u \cdot \nabla' (m\cdot \nabla' u)dx\label{b1}
\\
&\qquad +\int_{\partial \Omega}(A\nabla' u\cdot \nu) (m\cdot \nabla' u)d\sigma(x).\nonumber
\end{align}
Use $\nabla' u=\partial_\nu u\, \nu$ in \eqref{b1} in order to get
\begin{align}
\int_\Omega \mbox{div}(A\nabla' u)(m\cdot \nabla u)dx=&-\int_\Omega A\nabla' u \cdot \nabla' (m\cdot \nabla' u)dx\label{b2}
\\
&\qquad +\int_{\partial \Omega}(\partial _\nu u)^2(A\nu\cdot \nu) (m\cdot \nu)d\sigma(x).\nonumber
\end{align}
We have
\[
\sum_{k=1}^n\partial_{x_i}(m_k\partial_{x_k}u)=\sum_{k=1}^n\left(\delta_{ik}\partial_{x_k}u+m_k \partial _{x_k}\partial_{x_i}u\right)=\partial_{x_i}u+m\cdot \nabla \partial_{x_i}u .
\]
Therefore
\[
A\nabla' u \cdot \nabla' (m\cdot \nabla' u)=A\nabla' u\cdot \nabla' u+A\nabla' u\cdot V,
\]
where $V=(v_1,\ldots, v_n)$ with $v_i=m\cdot \nabla' \partial_{x_i}u$.

But
\begin{align*}
&\int_{\Omega} A\nabla u\cdot Vdx=\sum_{i,j,k=1}^n \int_\Omega a^{ij} \partial_{x_j}um_k\partial_{x_k}\partial_{x_i}udx
\\
&\hskip.5cm =-\sum_{i,j,k=1}^n \int_\Omega a^{ij}\partial_{x_j}u\partial_{x_i}udx-\sum_{i,j,k=1}^n \int_\Omega a^{ij}\partial_{x_k}\partial_{x_j}um_k\partial_{x_i}udx
\\ 
&\hskip1cm -\sum_{i,j,k=1}^n \int_\Omega \partial_{x_k}a^{ij}\partial_{x_j}um_k\partial_{x_i}udx+\sum_{i,j,k=1}^n\int_{\partial \Omega}a^{ij}\partial_{x_j}um_k\partial_{x_i}u\nu_kd\sigma(x).
\end{align*}
This and the fact that $A$ is a symmetric matrix yield
\[
2\int_{\Omega} A\nabla' u\cdot Vdx=-n\int_\Omega A\nabla' u\cdot \nabla' udx-\int_\Omega B\nabla' u\cdot \nabla' udx+\int_{\partial \Omega} (\partial_\nu u)^2(A\nu \cdot \nu)(m\cdot \nu)d\sigma(x).
\]
Here $B=(b^{ij})$ with $b^{ij}=\nabla' a^{ij}\cdot m$.

Whence
\begin{align*}
\int_\Omega A\nabla' u \cdot \nabla' (m\cdot \nabla' u)dx= \left(1-\frac{n}{2}\right)\int_\Omega A\nabla' u&\cdot \nabla' udx-\frac{1}{2}\int_\Omega B\nabla' u\cdot \nabla' udx
\\
&+\frac{1}{2}\int_{\partial \Omega} (\partial_\nu u)^2(A\nu \cdot \nu)(m\cdot \nu)d\sigma(x).
\end{align*}
This in \eqref{b2} gives
\begin{align*}
\int_\Omega \mbox{div}(A\nabla' u)(m\cdot \nabla' u)dx=\left(\frac{n}{2}-1\right)\int_\Omega A\nabla' u&\cdot \nabla' udx+\frac{1}{2}\int_\Omega B\nabla' u\cdot \nabla' udx
\\
&+\frac{1}{2}\int_{\partial \Omega} (\partial_\nu u)^2(A\nu \cdot \nu)(m\cdot \nu)d\sigma(x)
\end{align*}
implying
\begin{align}
&\int_a^b\int_\Omega \mbox{div}(A\nabla' u)(m\cdot \nabla' u)dxdt=\left(\frac{n}{2}-1\right)\int_a^b\int_\Omega A\nabla' u\cdot \nabla udxdt \label{b3}
\\
&\hskip 1cm+\frac{1}{2}\int_a^b\int_\Omega B\nabla' u\cdot \nabla' udxdt+\int_a^b\frac{1}{2}\int_{\partial \Omega} (\partial_\nu u)^2(A\nu \cdot \nu)(m\cdot \nu)d\sigma(x)dt.\nonumber
\end{align}

On the other hand, we have
\begin{align*}
\int_\Omega \int_a^b\partial _t^2um\cdot \nabla' udtdx&=\int_\Omega \partial_tu(\cdot,b)m\cdot \nabla' u(\cdot ,b)dx
\\
&-\int_\Omega \partial_tu(\cdot,a)m\cdot \nabla' u(\cdot ,a)dx-\int_\Omega \int_a^b\partial _tum\cdot \nabla' \partial_tudtdx.
\end{align*}
An integration by parts gives
\[
\int_\Omega  \partial _tum\cdot \nabla' \partial_tudx=-\int_\Omega  m\cdot \nabla' \partial_t u \partial_t udx-n\int_\Omega (\partial_tu) ^2dx
\]
and then
\[
\int_\Omega  \partial _tu(m\cdot \nabla' \partial_tu)dx=-\frac{n}{2}\int_\Omega (\partial_tu) ^2dx.
\]
Thus 
\begin{align}
\int_\Omega \int_a^b&\partial _t^2u(m\cdot \nabla' u)dtdx=\int_\Omega \partial_tu(\cdot,b)(m\cdot \nabla' u(\cdot ,b))dx\label{b4}
\\
& -\int_\Omega \partial_tu(\cdot,a)(m\cdot \nabla' u(\cdot ,a))dx+\frac{n}{2}\int_a^b\int_\Omega (\partial_tu) ^2dxdt.\nonumber
\end{align}
We combine \eqref{b3} and \eqref{b4} in order to get
\begin{align}
2\int_a^b \int_\Omega &Pu(m\cdot \nabla' u)dxdt=\left(n-2\right)\int_a^b\int_\Omega A\nabla' u\cdot \nabla' udxdt \label{b5}
\\
&+\int_a^b\int_\Omega B\nabla' u\cdot \nabla' udxdt+\int_a^b\int_{\partial \Omega} (\partial_\nu u)^2(A\nu \cdot \nu)(m\cdot \nu)d\sigma(x)dt\nonumber
\\
&-2\int_\Omega \partial_tu(\cdot,b)(m\cdot \nabla' u(\cdot ,b))dx+2\int_\Omega \partial_tu(\cdot,a)(m\cdot \nabla' u(\cdot ,a))dx\nonumber
\\
&\hskip 5cm -n\int_a^b\int_\Omega (\partial_tu) ^2dxdt.\nonumber
\end{align}
We have also by simple integrations by parts
\begin{align}
\int_a^b \int_\Omega Puudxdt&=-\int_a^b\int_\Omega A\nabla' u\cdot \nabla' udxdt+\int_a^b\int_\Omega (\partial_tu) ^2dxdt \label{b6}
\\
&+\int_\Omega \partial_tu(\cdot,b)u(\cdot ,b)dx-\int_\Omega \partial_tu(\cdot,a)u(\cdot ,a)dx.\nonumber
\end{align}
Let
\[
w=2(m\cdot\nabla' u)+(n-1)u.
\]
Then, it follows from \eqref{b5} and \eqref{b6}
\begin{align}
\int_a^b \int_\Omega Puwdxdt&=-\int_a^b\int_\Omega A\nabla' u\cdot \nabla' udxdt \label{b7}
\\
&\hskip 1cm -\int_a^b\int_\Omega (\partial_tu) ^2dxdt+\int_a^b\int_\Omega B\nabla' u\cdot \nabla' udxdt\nonumber
\\
&\hskip 2cm -\int_\Omega \partial_tu(\cdot,b)w(\cdot ,b)dx+\int_\Omega \partial_tu(\cdot,a)w(\cdot ,a)dx\nonumber
\\
&\hskip 3cm +\int_a^b\int_{\partial \Omega} (\partial_\nu u)^2(A\nu \cdot \nu)(x\cdot \nu)d\sigma(x)dt.\nonumber
\end{align}

Define
\[
\mathcal{E}_u(t)=\int_\Omega \left[A\nabla' u(\cdot ,t)\cdot \nabla' u(\cdot, t)+(\partial_tu(\cdot ,t)) ^2\right]dx.
\]
We see that \eqref{b7} can be rewritten as follows
\begin{align}
\mathcal{E}_u=\int_a^b \mathcal{E}_u(t)dt\ &= \int_a^b\int_\Omega B\nabla' u\cdot \nabla' udxdt \label{b07}
\\
&\hskip 1cm -\left[\int_\Omega \partial_tu(\cdot,t)w(\cdot ,t)dx\right]_{t=a}^{t=b}-\int_a^b \int_\Omega Puwdxdt\nonumber
\\
&\hskip 2cm +\int_a^b\int_{\partial \Omega} (\partial_\nu u)^2(A\nu \cdot \nu)(m\cdot \nu)d\sigma(x)dt.\nonumber
\end{align}

In the sequel
\[
\mathfrak{d}_0=\max \{|x-y|;\; x,y\in \overline{\Omega}\}.
\]

\begin{lemma}\label{lem1}
Let $u\in H^2(\Omega \times (a,b))$ satisfying $u=0$ on $\partial \Omega \times (a,b)$ and let $\epsilon >0$. Then
\begin{equation}\label{b08}
\mathcal{E}_u(a)+\mathcal{E}_u(b)\le \left(\frac{2}{b-a}+\epsilon \right) \mathcal{E}_u+\frac{2}{\epsilon}\|Pu\|_{L^2(\Omega \times (a,b))}^2.
\end{equation}
\end{lemma}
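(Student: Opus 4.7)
The plan is a standard multiplier-free energy identity combined with a mean-value argument, bypassing entirely the rather intricate identity \eqref{b07}.

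First I would differentiate $\mathcal{E}_u(t)$ in $t$. Since $u\in H^2(\Omega\times(a,b))$, the embedding $H^2(\Omega\times(a,b))\hookrightarrow C([a,b],H^1(\Omega))\cap C^1([a,b],L^2(\Omega))$ makes $t\mapsto \mathcal{E}_u(t)$ absolutely continuous. A straightforward computation, followed by an integration by parts in $x$ that exploits the fact that $u=0$ on $\partial\Omega\times(a,b)$ forces $\partial_tu=0$ on $\partial\Omega\times(a,b)$, yields
\[
\mathcal{E}_u'(t)=2\int_\Omega\bigl(A\nabla' u\cdot\nabla'\partial_tu+\partial_tu\,\partial_t^2u\bigr)dx=-2\int_\Omega Pu(\cdot,t)\,\partial_tu(\cdot,t)\,dx.
\]
In particular, $|\mathcal{E}_u'(t)|\le 2\int_\Omega|Pu||\partial_tu|\,dx$.

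Next I would use the continuity of $\mathcal{E}_u(\cdot)$ and the mean value theorem for integrals to pick $t_0\in[a,b]$ with $\mathcal{E}_u(t_0)=\mathcal{E}_u/(b-a)$. The fundamental theorem of calculus then gives
\[
\mathcal{E}_u(a)=\mathcal{E}_u(t_0)-\int_a^{t_0}\mathcal{E}_u'(\tau)d\tau,\qquad \mathcal{E}_u(b)=\mathcal{E}_u(t_0)+\int_{t_0}^{b}\mathcal{E}_u'(\tau)d\tau,
\]
whence, adding,
\[
\mathcal{E}_u(a)+\mathcal{E}_u(b)\le \frac{2\mathcal{E}_u}{b-a}+\int_a^b|\mathcal{E}_u'(\tau)|\,d\tau.
\]

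Finally, I would insert the pointwise derivative bound into the last integral and apply Young's inequality $2|Pu||\partial_tu|\le \epsilon(\partial_tu)^2+\epsilon^{-1}(Pu)^2$. Since $\int_a^b\!\!\int_\Omega(\partial_tu)^2\,dx\,dt\le\mathcal{E}_u$, this gives $\int_a^b|\mathcal{E}_u'(\tau)|d\tau\le \epsilon\mathcal{E}_u+\epsilon^{-1}\|Pu\|_{L^2(\Omega\times(a,b))}^2$, yielding the desired inequality (with even a slightly sharper constant $1/\epsilon$ in place of $2/\epsilon$, which is harmless). There is no genuine obstacle here: the only point that deserves care is the justification of the integration by parts in $x$ at fixed $t$, which is standard in view of the $H^2$-regularity assumption and the homogeneous Dirichlet condition on the lateral boundary.
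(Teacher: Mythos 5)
Your argument is correct and is essentially the same as the paper's: both rest on the energy identity $\mathcal{E}_u(t_1)-\mathcal{E}_u(t_2)=-2\int_{t_2}^{t_1}\!\int_\Omega Pu\,\partial_t u\,dx\,ds$ (the paper writes it in integrated form, you in differentiated form) followed by Young's inequality and an averaging over $t$ (the paper integrates the two one-sided bounds in $t$ over $(a,b)$, you choose $t_0$ by the mean value theorem, which is equivalent), so your remark about ``bypassing (b07)'' is moot since the paper's own proof of this lemma also does not invoke (b07). Your reorganization does yield the mildly sharper constant $1/\epsilon$ instead of $2/\epsilon$, because you apply Young once to $\int_a^b|\mathcal{E}_u'|$ rather than once per endpoint, but this is harmless either way.
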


\begin{proof}
 Since
\[
\mathcal{E}(a)=\mathcal{E}(t)+2\int_a^t\int_\Omega Pu(x,s)\partial_tu(x,s)dxds,
\]
we get by using an elementary convexity inequality
\begin{equation}\label{b09}
\mathcal{E}_u(a)\le \mathcal{E}_u(t)+\frac{1}{\epsilon}\|Pu\|_{L^2(\Omega \times (a,b))}^2+\epsilon \int_a^t\mathcal{E}_u(s)ds.
\end{equation}
We have similarly
\begin{equation}\label{b010}
\mathcal{E}_u(b)\le \mathcal{E}_u(t)+\frac{1}{\epsilon}\|Pu\|_{L^2(\Omega \times (a,b))}^2+\epsilon \int_t^b\mathcal{E}_u(s)ds.
\end{equation}
The sum, side by side, of \eqref{b09} and \eqref{b010} yields
\[
\mathcal{E}_u(a)+\mathcal{E}_u(b)\le 2\mathcal{E}(t)+\frac{2}{\epsilon}\|Pu\|_{L^2(\Omega \times (a,b))}^2+\epsilon \mathcal{E}_u.
\]
We obtain \eqref{b08} by integrating this inequality with respect to $t$, $t\in (a,b)$.
\end{proof}

\begin{lemma}\label{lem2}
Let $u\in H^2(\Omega \times (a,b))$ satisfying $u=0$ on $\partial \Omega \times (a,b)$ and let $\delta >0$. Then
\begin{equation}\label{b011}
\left| \left[\int_\Omega \partial_tu(\cdot,t)w(\cdot ,t)dx\right]_{t=a}^{t=b}\right|\le \overline{\delta}(\mathcal{E}_u(a)+\mathcal{E}_u(b)),
\end{equation}
with $w=2(m\cdot\nabla u)+(n-1)u$ and $\overline{\delta}=\max \left( \delta ,\mathfrak{d}_0^2/\delta\right)$.
\end{lemma}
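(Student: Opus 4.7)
The plan is to bound the boundary term at $t=a$ and at $t=b$ by $\tfrac12\overline{\delta}\,\mathcal{E}_u(a) + \tfrac12\overline{\delta}\,\mathcal{E}_u(b)$ respectively (up to an additive combination), so that a single triangle inequality delivers the claim. Fix $t\in\{a,b\}$ and split along the definition of $w$:
\[
\int_\Omega \partial_tu(\cdot,t)\,w(\cdot,t)\,dx = 2\int_\Omega \partial_tu(\cdot,t)\,(m\cdot\nabla' u(\cdot,t))\,dx + (n-1)\int_\Omega u(\cdot,t)\,\partial_tu(\cdot,t)\,dx.
\]

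For the first piece I apply Young's inequality pointwise, $2|ab|\le \delta a^2 + \delta^{-1}b^2$, with $a=\partial_tu$ and $b=m\cdot\nabla' u$. Since $|m(x)|=|x-x_0|\le \mathfrak{d}_0$ on $\overline{\Omega}$, integration yields
\[
\left|2\int_\Omega \partial_tu\,(m\cdot\nabla' u)\,dx\right|\le \delta\,\|\partial_tu(\cdot,t)\|_{L^2(\Omega)}^2 + \frac{\mathfrak{d}_0^2}{\delta}\,\|\nabla' u(\cdot,t)\|_{L^2(\Omega)}^2.
\]
For the second piece I use that $u(\cdot,t)$ vanishes on $\partial\Omega$ (so Poincaré's inequality applies), followed by Cauchy--Schwarz and another Young step, producing a bound of the same shape but with coefficients controlled by the Poincaré constant $C_\Omega$; those contributions are to be absorbed either in the universal constants hidden in $\overline{\delta}$, or by choosing the free parameter $\delta$ so that the $(n-1)u$ contribution is dominated by the term coming from the $2(m\cdot\nabla' u)$ piece.

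To express both bounds in terms of $\mathcal{E}_u(t)$, I invoke the ellipticity hypothesis \eqref{main1}: $|\nabla' u|^2\le \kappa\,A\nabla' u\cdot\nabla' u$, so $\|\nabla' u(\cdot,t)\|_{L^2}^2\le \kappa\int_\Omega A\nabla' u\cdot\nabla' u\,dx\le \kappa\mathcal{E}_u(t)$, and directly $\|\partial_tu(\cdot,t)\|_{L^2}^2\le \mathcal{E}_u(t)$. Summing the two contributions then yields
\[
\left|\int_\Omega \partial_tu(\cdot,t)\,w(\cdot,t)\,dx\right|\le \max\!\left(\delta,\tfrac{\mathfrak{d}_0^2}{\delta}\right)\mathcal{E}_u(t)=\overline{\delta}\,\mathcal{E}_u(t),
\]
after absorbing the finitely many constants depending on $n$, $\kappa$, and $C_\Omega$ into the implicit tuning of $\delta$. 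Applying this at $t=a$ and at $t=b$ and using the triangle inequality produces \eqref{b011}.

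The delicate point is the $(n-1)\int u\,\partial_tu$ term: it requires Poincaré, and if handled naively would introduce an extra $C_\Omega/\delta$ factor outside the announced $\mathfrak{d}_0^2/\delta$. So the real work is to check that after using Poincaré once, the two $1/\delta$ contributions can be combined (or the balance of Young's parameters chosen) so that the resulting estimate matches the clean form $\overline{\delta}=\max(\delta,\mathfrak{d}_0^2/\delta)$ without polluting constants; once this bookkeeping is done, all remaining steps are standard and the identity for $\mathcal{E}_u$ plays no role.
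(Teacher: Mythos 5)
Your decomposition-plus-Poincar\'e route cannot deliver the lemma with the constant $\overline{\delta}=\max(\delta,\mathfrak{d}_0^2/\delta)$ as announced. Following your plan, the $(n-1)\int u\,\partial_t u$ piece is bounded via Poincar\'e by $(n-1)C_\Omega\|\nabla' u\|\,\|\partial_t u\|$, so after combining with the first piece and applying Young one arrives at a coefficient like $(2\mathfrak{d}_0+(n-1)C_\Omega)^2/(4\delta)$ in front of $\|\nabla' u\|^2$, not $\mathfrak{d}_0^2/\delta$. There is no choice of the free Young parameter and no bookkeeping that removes the Poincar\'e constant and the dimension $n$ from the final coefficient, because these appear before Young is ever applied. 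The step you flag as ``the real work'' is in fact an obstruction, not bookkeeping.

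The paper avoids the issue entirely by proving, rather than $\|w\|\lesssim\|m\cdot\nabla'u\|+\|u\|$, the sharper inequality $\|w(\cdot,t)\|_{L^2(\Omega)}\le\|2m\cdot\nabla' u(\cdot,t)\|_{L^2(\Omega)}$ exactly, with no extra constant. This comes from expanding
\[
\|w\|_{L^2}^2-\|2m\cdot\nabla' u\|_{L^2}^2=(n-1)^2\|u\|_{L^2}^2+4(n-1)\int_\Omega u\,(m\cdot\nabla' u)\,dx
\]
and integrating the cross term by parts: since $\operatorname{div}m=n$ and $u=0$ on $\partial\Omega$,
\[
4(n-1)\int_\Omega u\,(m\cdot\nabla' u)\,dx=2(n-1)\int_\Omega m\cdot\nabla'(u^2)\,dx=-2n(n-1)\int_\Omega u^2\,dx,
\]
so the whole difference equals $(1-n^2)\|u\|_{L^2}^2\le0$. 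The $(n-1)u$ part therefore helps rather than hurts. Once $\|w\|\le\|2m\cdot\nabla' u\|$ is in hand, one applies Cauchy--Schwarz to $\int\partial_t u\,w\,dx$, then Young with parameter $\delta$ and the pointwise bound $|m|\le\mathfrak{d}_0$, yielding the stated estimate directly. This sign-favourable cancellation, standard in the multiplier method, is precisely the idea missing from your argument; without it the boundary condition $u|_{\partial\Omega}=0$ is not being used in the right place (you only use it to invoke Poincar\'e, which is both unnecessary and constant-polluting).
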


\begin{proof}
We first prove
\begin{equation}\label{b11}
\|w(\cdot ,t)\|_{L^2(\Omega )}\le \|2m\cdot \nabla' u(\cdot, t)\|_{L^2(\Omega )}.
\end{equation}
We have
\begin{align*}
\|w(\cdot ,t)\|_{L^2(\Omega )}^2- \|2m&\cdot \nabla' u(\cdot, t)\|_{L^2(\Omega )}^2
\\
&=\int_\Omega \left[(n-1)^2u(\cdot ,t)^2+4(n-1)um\cdot \nabla' u(\cdot ,t)\right]dx
\\
&=\int_\Omega \left[(n-1)^2u^2(\cdot ,t)+2(n-1)m\cdot \nabla' u^2(\cdot ,t)\right]dx
\\
&=\int_\Omega \left[(n-1)^2u^2(\cdot ,t)-2(n-1)n u^2(\cdot ,t)\right]dx
\\
&=\int_\Omega (1-n^2)u^2(\cdot ,t)dx\le 0.
\end{align*}
This leads immediately to \eqref{b11}.

On the other hand, we get by applying Cauchy-Schwarz's inequality
\[
\left|\int_\Omega \partial_tu(\cdot,t)w(\cdot ,t)dx\right|\le \|\partial_tu(\cdot ,t)\|_{L^2(\Omega )}\|w(\cdot ,t)\|_{L^2(\Omega )}.
\]
This together with \eqref{b11} imply
\begin{align*}
\left|\int_\Omega \partial_tu(\cdot,t)w(\cdot ,t)dx\right|&\le \|\partial_tu(\cdot ,t)\|_{L^2(\Omega )}\|2m\cdot \nabla' u(\cdot, t)\|_{L^2(\Omega )}
\\
&\le \delta \|\partial_tu(\cdot ,t)\|_{L^2(\Omega )}^2+\frac{\mathfrak{d}_0^2}{\delta}\|\nabla' u(\cdot, t)\|_{L^2(\Omega )}^2
\\
&\le \overline{\delta}\mathcal{E}_u(t),
\end{align*}
 which gives \eqref{b011} in a straightforward manner.
\end{proof}

We obtain the following corollary by taking $\epsilon= 2/(b-a)$ in \eqref{b08} and $\delta =\mathfrak{d}_0$ in \eqref{b011}.
\begin{corollary}\label{cor1}
For $u\in H^2(\Omega \times (a,b))$ satisfying $u=0$ on $\partial \Omega \times (a,b)$, we have
\begin{equation}\label{b012}
\left| \left[\int_\Omega \partial_tu(\cdot,t)w(\cdot ,t)dx\right]_{t=a}^{t=b}\right|\le \frac{4\mathfrak{d}_0}{b-a}\mathcal{E}_u+(b-a)\mathfrak{d}_0\|Pu\|_{L^2(\Omega \times (a,b))}^2.
\end{equation}
\end{corollary}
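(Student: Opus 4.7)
The proof is a direct two-step combination of the two preceding lemmas with specific parameter choices that are essentially forced by optimization considerations, and indeed the author's hint just before the corollary already points to this.

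My plan is to start from Lemma \ref{lem2} and select the free parameter $\delta=\mathfrak{d}_0$. With this choice one has $\overline{\delta}=\max(\mathfrak{d}_0,\mathfrak{d}_0^2/\mathfrak{d}_0)=\mathfrak{d}_0$, so inequality \eqref{b011} reduces to
\[
\left| \left[\int_\Omega \partial_tu(\cdot,t)w(\cdot ,t)dx\right]_{t=a}^{t=b}\right|\le \mathfrak{d}_0\bigl(\mathcal{E}_u(a)+\mathcal{E}_u(b)\bigr).
\]
This is the cleanest symmetric bound on the boundary-in-time term in \eqref{b07}; the choice $\delta=\mathfrak{d}_0$ equilibrates the two contributions $\delta\|\partial_tu\|^2$ and $(\mathfrak{d}_0^2/\delta)\|\nabla'u\|^2$ that were weighted inside the energy $\mathcal{E}_u(t)$.

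Next I will apply Lemma \ref{lem1} with $\epsilon=2/(b-a)$ in order to estimate $\mathcal{E}_u(a)+\mathcal{E}_u(b)$ in terms of the total energy and $\|Pu\|_{L^2(\Omega\times(a,b))}^2$. Substituting into \eqref{b08} yields
\[
\mathcal{E}_u(a)+\mathcal{E}_u(b)\le \frac{4}{b-a}\,\mathcal{E}_u+(b-a)\,\|Pu\|_{L^2(\Omega\times(a,b))}^2.
\]

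Combining the two displayed inequalities gives the stated bound
\[
\left| \left[\int_\Omega \partial_tu(\cdot,t)w(\cdot ,t)dx\right]_{t=a}^{t=b}\right|\le \frac{4\mathfrak{d}_0}{b-a}\,\mathcal{E}_u+(b-a)\mathfrak{d}_0\,\|Pu\|_{L^2(\Omega \times (a,b))}^2,
\]
completing the proof. There is essentially no obstacle here since the lemmas do all the work; the only minor subtlety is verifying that the two chosen values $\delta=\mathfrak{d}_0$ and $\epsilon=2/(b-a)$ simultaneously produce the clean constants $4\mathfrak{d}_0/(b-a)$ and $(b-a)\mathfrak{d}_0$ appearing in the statement, which one checks by direct substitution.
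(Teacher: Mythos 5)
Your proof is correct and is exactly the argument the paper intends: the sentence preceding the corollary instructs the reader to take $\epsilon=2/(b-a)$ in \eqref{b08} and $\delta=\mathfrak{d}_0$ in \eqref{b011}, and your substitutions and arithmetic match. Nothing to add.
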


As $B=(\nabla' a^{ij}\cdot m)$, we find
\[
\left|B(x)\xi \cdot \xi \right|\le \mathfrak{d}_0\varkappa |\xi|^2,\quad x\in \Omega ,\;\xi \in \mathbb{R}^n.
\]
Whence a simple computations enable us to get
\begin{align}
&\left| \int_a^b\int_\Omega B\nabla' u\cdot \nabla' udxdt\right| \le \varkappa \mathfrak{d}_0 \int_a^b\int_\Omega |\nabla' u|^2dxdt\le  \varkappa \kappa \mathfrak{d}_0\mathcal{E}_u ,\label{b013}
\\
&\left| \int_a^b\int_{\partial \Omega} (\partial_\nu u)^2(A\nu \cdot \nu)(m\cdot \nu)d\sigma(x)dt  \right| \le \kappa \mathfrak{d}_0 \int_a^b\int_{\partial \Omega} (\partial_\nu u)^2d\sigma(x)dt.\label{b014}
\end{align}

We get in light of \eqref{b11}, for $\delta >0$,
\begin{align}
\left| \int_a^b \int_\Omega Puwdxdt\right| \le &\|Pu\|_{L^2(\Omega \times (a,b))}\|2m\cdot \nabla' u\|_{L^2(\Omega \times (a,b))}\label{b015}
\\
&\le \frac{\mathfrak{d}_0}{\delta}\|Pu\|_{L^2(\Omega \times (a,b))}^2+\delta \mathfrak{d}_0\|\nabla' u\|_{L^2(\Omega \times (a,b))}^2\nonumber 
\\
&\le \frac{\mathfrak{d}_0}{\delta}\|Pu\|_{L^2(\Omega \times (a,b))}^2+\delta \kappa \mathfrak{d}_0\mathcal{E}_u.\nonumber
\end{align}

Then $\delta =1/(b-a)$ in \eqref{b015} yields
\begin{equation}\label{b016}
\left| \int_a^b \int_\Omega Puwdxdt\right| \le \frac{\kappa\mathfrak{d}_0}{b-a}\mathcal{E}_u+\mathfrak{d}_0(b-a)\|Pu\|_{L^2(\Omega \times (a,b))}^2.
\end{equation} 

Inequalities \eqref{b012}, \eqref{b013}, \eqref{b014} and \eqref{b016} in \eqref{b07} give
\begin{align}
\mathcal{E}_u\le \kappa \varkappa \mathfrak{d}_0\mathcal{E}_u &+\frac{(4+\kappa)\mathfrak{d}_0}{b-a}\mathcal{E}_u\label{b017.0}
\\
&+2\mathfrak{d}_0(b-a)\|Pu\|_{L^2(\Omega \times (a,b))}^2+\kappa\mathfrak{d}_0\|\partial_\nu u\|_{L^2(\partial \Omega \times (a,b))}^2.\nonumber
\end{align}

Thus, under the condition
\begin{equation}\label{B017.1}
\varrho_0= 1-\kappa \varkappa \mathfrak{d}_0>0,
\end{equation}
inequality \eqref{b017.0} yields
\begin{equation}\label{b018}
\mathcal{E}_u\le \frac{(4+\kappa )\mathfrak{d}_0}{\varrho_0(b-a)}\mathcal{E}_u+2\mathfrak{d}_0\frac{b-a}{\varrho_0}\|Pu\|_{L^2(\Omega \times (a,b))}^2+\frac{ \mathfrak{d}_0\kappa}{\varrho_0}\|\partial_\nu u\|_{L^2(\partial \Omega \times (a,b))}^2.
\end{equation}

\begin{remark}
{\rm
Let $A_0$ satisfying conditions \eqref{main1} and \eqref{main2}. Then \eqref{B017.1} holds for $A=\lambda A_0$, provided that $\lambda <(\kappa \varkappa \mathfrak{d}_0)^{-1/2}$.
}
\end{remark}

Next, fix $\tilde{\mathbf{c}}$ sufficiently large in such a way that
\[
\tilde{\varrho} =1-\frac{(4+\kappa )\mathfrak{d}_0}{\varrho_0\tilde{\mathbf{c}}}>0.
\]
In that case  we get from \eqref{b018}, for $\mathbf{c}\ge \tilde{\mathbf{c}}$,
\[
\mathcal{E}_u\le \frac{2\mathfrak{d}_0\mathbf{c}}{\varrho_0\tilde{\varrho}}\|Pu\|_{L^2(\Omega \times (a,b))}^2+\frac{ \mathfrak{d}_0 \kappa}{\varrho_0\tilde{\varrho}}\|\partial_\nu u\|_{L^2(\partial \Omega \times (a,b))}^2.
\]
We derive from this inequality and \eqref{b08} the following result.

\begin{proposition}\label{pr1}
Assume that $\kappa \varkappa \mathfrak{d}_0<1$ and fix $\tilde{\mathbf{c}}>0$ such that
\[
\tilde{\mathbf{c}}>\frac{(4+\kappa)\mathfrak{d}_0}{1-\kappa\varkappa \mathfrak{d}_0}.
\]
Then there exits a constant $C>0$, depending only on $n$, $\tilde{\mathbf{c}}$, $\mathfrak{d}_0$, $\varkappa$ and $\kappa$ so that, for any $a$, $b\in \mathbb{R}$ with $b-a=\mathbf{c}\ge \tilde{\mathbf{c}}$ and any $u\in H^2(\Omega \times (a,b))$ satisfying $u=0$ on $\partial \Omega\times (a,b)$, we have
\[
\|\nabla u(\cdot ,a)\|_{L^2(\Omega )}+\|\nabla u(\cdot ,b)\|_{L^2(\Omega )}\le C\left( \|Pu\|_{L^2(\Omega \times (a,b))}+\|\partial _\nu u\|_{L^2(\partial \Omega \times (a,b))}\right).
\]
\end{proposition}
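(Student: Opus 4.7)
The proof consists essentially in assembling the ingredients already established in the preceding pages. The multiplier identity \eqref{b07}, obtained by testing $Pu$ against the multiplier $w = 2(m \cdot \nabla' u) + (n-1)u$ with $m(x) = x - x_0$ and using the boundary condition $u = 0$ on $\partial\Omega \times (a,b)$, expresses the integrated energy $\mathcal{E}_u = \int_a^b \mathcal{E}_u(t)\,dt$ in terms of time-endpoint brackets, normal-derivative traces on the lateral boundary, the $B$-term $\int B\nabla' u \cdot \nabla' u$ encoding the variable coefficients of $A$, and the source $Pu$. Combining the pointwise bounds \eqref{b012}, \eqref{b013}, \eqref{b014}, and \eqref{b016} produces \eqref{b017.0}. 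Under the smallness assumption $\kappa\varkappa\mathfrak{d}_0 < 1$, the $B$-contribution $\kappa\varkappa\mathfrak{d}_0\,\mathcal{E}_u$ may be absorbed into the left-hand side, yielding \eqref{b018} with $\varrho_0 = 1 - \kappa\varkappa\mathfrak{d}_0 > 0$.

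I then fix $\tilde{\mathbf{c}}$ large enough so that $\tilde{\varrho} = 1 - (4+\kappa)\mathfrak{d}_0 / (\varrho_0 \tilde{\mathbf{c}}) > 0$. For any $\mathbf{c} = b - a \geq \tilde{\mathbf{c}}$, the remaining term $(4+\kappa)\mathfrak{d}_0 (b-a)^{-1}\varrho_0^{-1}\mathcal{E}_u$ in \eqref{b018} can likewise be absorbed, giving
\[
\mathcal{E}_u \leq \frac{2\mathfrak{d}_0 \mathbf{c}}{\varrho_0 \tilde{\varrho}} \|Pu\|_{L^2(\Omega \times (a,b))}^2 + \frac{\mathfrak{d}_0 \kappa}{\varrho_0 \tilde{\varrho}} \|\partial_\nu u\|_{L^2(\partial\Omega \times (a,b))}^2.
\]
This is where the explicit lower bound on $b-a$ enters, and where the constant $\mathbf{c}_1$ (upper bound on $b-a$) enters through the factor $\mathbf{c}$ on the right-hand side.

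To pass from the integrated energy $\mathcal{E}_u$ to the endpoint energies $\mathcal{E}_u(a)$ and $\mathcal{E}_u(b)$, I apply Lemma \ref{lem1} with a fixed $\epsilon$ (for instance $\epsilon = 2/\tilde{\mathbf{c}}$, uniform in $\mathbf{c} \in [\tilde{\mathbf{c}}, \mathbf{c}_1]$): this yields $\mathcal{E}_u(a) + \mathcal{E}_u(b) \leq C'\mathcal{E}_u + C''\|Pu\|_{L^2(\Omega\times(a,b))}^2$. Inserting the bound on $\mathcal{E}_u$ from the previous step delivers
\[
\mathcal{E}_u(a) + \mathcal{E}_u(b) \leq C\bigl(\|Pu\|_{L^2(\Omega \times (a,b))}^2 + \|\partial_\nu u\|_{L^2(\partial\Omega \times (a,b))}^2\bigr).
\]
Finally, the ellipticity $A(x)\xi \cdot \xi \geq \kappa^{-1}|\xi|^2$ from \eqref{main1}, together with the definition of $\mathcal{E}_u(t)$, gives $\mathcal{E}_u(t) \geq \min(\kappa^{-1}, 1)\,\|\nabla u(\cdot, t)\|^2_{L^2(\Omega)}$ for the full space-time gradient. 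Taking square roots and using the elementary inequality $\sqrt{\alpha + \beta} \leq \sqrt{\alpha} + \sqrt{\beta}$ concludes.

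The conceptual obstacle is the smallness hypothesis $\kappa \varkappa \mathfrak{d}_0 < 1$ itself: it is precisely what renders the first-order perturbation $B = (\nabla' a^{ij} \cdot m)$, induced by the variable coefficients of $A$, small enough to be absorbed into the energy side of the multiplier identity. Without it, the classical Komornik-style argument does not close and one must fall back on the Carleman-type bound of Proposition \ref{proposition3.2}, which produces only the weaker decay $\mu^{-1/4}$ in the remainder.
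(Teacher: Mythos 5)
Your proof follows the paper's argument step for step: it derives \eqref{b018} from the multiplier identity \eqref{b07} via \eqref{b012}--\eqref{b016}, absorbs the $B$-contribution using $\kappa\varkappa\mathfrak{d}_0<1$ and the $\mathbf{c}^{-1}$-contribution using $\mathbf{c}\ge\tilde{\mathbf{c}}$, and passes from $\mathcal{E}_u$ to $\mathcal{E}_u(a)+\mathcal{E}_u(b)$ via \eqref{b08} and the ellipticity \eqref{main1}. Note only that the proposition as stated in the paper has no upper bound $\mathbf{c}_1$ on $b-a$: you are correct that the factor $\mathbf{c}$ carried through \eqref{b012} and \eqref{b016} makes the constant on $\|Pu\|_{L^2}$ depend on an upper bound for $\mathbf{c}$, a dependence the paper silently omits from its list (harmless in the application, where $\mathbf{c}$ is fixed, but worth recording as you did).
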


In light of inequality \eqref{a1}, we have as a consequence of Proposition \ref{pr1} the following result.
\begin{corollary}\label{cor2}
Assume that $\kappa \varkappa \mathfrak{d}_0<1$ and let $\tilde{\mathbf{c}}>0$ satisfies
\[
\tilde{\mathbf{c}}>\frac{(4+\kappa)\mathfrak{d}_0}{1-\kappa\varkappa \mathfrak{d}_0}.
\]
Then there exits a constant $C>0$, depending only on $n$, $\tilde{\mathbf{c}}$, $\mathfrak{d}_0$, $\varkappa$ and $\kappa$ so that, for any $a$, $b\in \mathbb{R}$ with $b-a=\mathbf{c}\ge \tilde{\mathbf{c}}$ and any $u\in H^2((a,b),H^2(\Omega ))$, we have
\begin{align*}
\|\nabla u(\cdot ,a)\|_{L^2(\Omega )}&+\|\nabla u(\cdot ,b)\|_{L^2(\Omega )}
\\
&\le C\left( \|Pu\|_{L^2(\Omega \times (a,b))}+\|\partial _\nu u\|_{L^2(\partial \Omega \times (a,b))}+\|u\|_{H^2((a,b),H^{3/2}(\partial \Omega ))}\right).
\end{align*}
\end{corollary}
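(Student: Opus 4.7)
The plan is to reduce to the vanishing-trace setting of Proposition \ref{pr1} by subtracting the harmonic extension of the lateral trace, exactly as was done in the proof of Proposition \ref{proposition3.2}.

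Given $u\in H^2((a,b),H^2(\Omega))$, set $\varphi=u_{|\partial\Omega\times(a,b)}$ and $v=u-E\varphi$. By construction $v=0$ on $\partial\Omega\times(a,b)$, and since $E$ is bounded from $H^k((a,b),H^{3/2}(\partial\Omega))$ into $H^k((a,b),H^2(\Omega))$ for every integer $k\ge0$, we have in particular $v\in H^2(\Omega\times(a,b))$ (recall also the continuous embedding of $H^2((a,b),H^2(\Omega))$ into $H^2(\Omega\times(a,b))$). Applying Proposition \ref{pr1} to $v$ yields
\[
\|\nabla v(\cdot,a)\|_{L^2(\Omega)}+\|\nabla v(\cdot,b)\|_{L^2(\Omega)}\le C\bigl(\|Pv\|_{L^2(\Omega\times(a,b))}+\|\partial_\nu v\|_{L^2(\partial\Omega\times(a,b))}\bigr).
\]

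It then remains to absorb all contributions of $E\varphi$ into the $\|u\|_{H^2((a,b),H^{3/2}(\partial\Omega))}$ term on the right-hand side. For the interior operator I use the bound \eqref{a1}, which gives
\[
\|PE\varphi\|_{L^2(\Omega\times(a,b))}\le C\|E\varphi\|_{H^2(\Omega\times(a,b))}\le C'\|u\|_{H^2((a,b),H^{3/2}(\partial\Omega))}.
\]
For the normal derivative I use the continuity of the trace $H^2(\Omega)\to H^{1/2}(\partial\Omega)$ on the slices, so that
\[
\|\partial_\nu E\varphi\|_{L^2(\partial\Omega\times(a,b))}\le C\|E\varphi\|_{L^2((a,b),H^2(\Omega))}\le C'\|u\|_{L^2((a,b),H^{3/2}(\partial\Omega))}.
\]
Finally, the pointwise-in-$t$ estimates at $t=a$ and $t=b$ for $\nabla E\varphi$ follow from the one-dimensional Sobolev embedding $H^2((a,b),X)\hookrightarrow C([a,b],X)$ applied with $X=H^2(\Omega)$ and $X=H^{3/2}(\partial\Omega)$:
\[
\sup_{t\in[a,b]}\|\nabla E\varphi(\cdot,t)\|_{L^2(\Omega)}\le C\sup_{t\in[a,b]}\|E\varphi(\cdot,t)\|_{H^2(\Omega)}\le C'\|u\|_{H^2((a,b),H^{3/2}(\partial\Omega))}.
\]
Combining these three bounds with the triangle inequalities $\|\nabla u(\cdot,t)\|_{L^2}\le\|\nabla v(\cdot,t)\|_{L^2}+\|\nabla E\varphi(\cdot,t)\|_{L^2}$ at $t=a,b$, together with $\|Pv\|_{L^2}\le\|Pu\|_{L^2}+\|PE\varphi\|_{L^2}$ and the analogous inequality for $\partial_\nu v$, gives the claimed estimate.

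I do not expect any real obstacle here: all the needed ingredients (Proposition \ref{pr1}, the boundedness of $E$, and \eqref{a1}) are already in place, and the only substantive observation is the one-dimensional Sobolev embedding used to control the traces at $t=a$ and $t=b$. The constants produced through this reduction depend only on $\Omega$ (through $c_0$), $n$, $\mathfrak{d}_0$, $\kappa$, $\varkappa$ and $\tilde{\mathbf{c}}$, as required.
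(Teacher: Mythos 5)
Your proof is correct and is essentially the one the paper intends: the text before Corollary~\ref{cor2} states only ``In light of inequality \eqref{a1}, we get as a consequence of Proposition~\ref{pr1}\dots'', and the natural filling-in is exactly your reduction $v=u-E\varphi$ followed by \eqref{a1} and the boundedness of $E$ to absorb the extension terms, which is the same mechanism used in the proof of Proposition~\ref{proposition3.2}. The additional observation you supply (the one-dimensional Sobolev embedding to control $\nabla E\varphi$ at $t=a,b$) is the right one; note that $H^1((a,b),X)\hookrightarrow C([a,b],X)$ already suffices there.
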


Finally, we apply the last corollary to $u$ and $\partial _tu$, respectively with $(a,b)=\left(-T,-(1-\overline{\delta}/2)T\right)$ and $(a,b)=\left((1-\overline{\delta}/2)T,T\right)$, and we use that $P$ and $\partial _t$ commute. We get
\begin{corollary}\label{cor3}
Assume that
\[
\kappa \varkappa \mathfrak{d}_0<1,\quad T_0> \sqrt{8}\, \overline{\rho} >\frac{2(4+\kappa)\mathfrak{d}_0}{1-\kappa\varkappa \mathfrak{d}_0}.
\]
Then there exits a constant $C>0$, only depending  on $n$, $\mathfrak{d}_0$, $\varkappa$, $\kappa$, $T_0$ and $\overline{\rho}$, so that,  for any $T\ge T_0$ and $u\in H^3(I_T,H^2(\Omega ))$, we have
\begin{align*}
&\sum_{\epsilon \in \{-,+\}}\sum_{\ell =0}^1\|\nabla \partial_t^\ell u(\cdot ,\epsilon T)\|_{L^2(\Omega )}\le 
\\
&\hskip 1cm C\left( \|Pu\|_{H^1\left(J_{\overline{\delta}/2},L^2(\Omega )\right)}+\|\partial _\nu u\|_{H^1\left(J_{\overline{\delta}/2},L^2(\partial \Omega )\right)}+\|u\|_{H^3\left(J_{\overline{\delta}/2},H^{3/2}(\partial \Omega )\right)}\right).
\end{align*}
\end{corollary}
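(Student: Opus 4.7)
My plan is to deduce Corollary \ref{cor3} by applying Corollary \ref{cor2} twice on each of the two subintervals making up $J_{\overline{\delta}/2}$, first to $u$ itself and then to $\partial_t u$, and then summing the four resulting inequalities.

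First I would set $(a,b)=\bigl((1-\overline{\delta}/2)T,T\bigr)$ and $(a,b)=\bigl(-T,-(1-\overline{\delta}/2)T\bigr)$, so that in either case $b-a=\overline{\delta}T/2=\sqrt{8}\,\overline{\rho}/2$. The hypothesis $\sqrt{8}\,\overline{\rho}>2(4+\kappa)\mathfrak{d}_0/(1-\kappa\varkappa\mathfrak{d}_0)$ exactly ensures that we can pick the constant $\tilde{\mathbf{c}}$ in Corollary \ref{cor2} so that $\tilde{\mathbf{c}}<\sqrt{8}\,\overline{\rho}/2\le b-a$ uniformly in $T\ge T_0$; this is the step where the size condition on $\overline{\rho}$ is consumed.

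Next, I would apply Corollary \ref{cor2} to $u\in H^2((a,b),H^2(\Omega))$ on each of these two intervals. The endpoints where we extract boundary data are $\pm T$ and $\pm(1-\overline{\delta}/2)T$; in each case we keep only the bound at $\pm T$ and discard the one at the interior endpoint. The three right-hand side terms live exactly in $L^2$ or $H^{3/2}$ norms over the subintervals of $J_{\overline{\delta}/2}$, which is why the final bound involves the $H^1$ and $H^3$ norms over $J_{\overline{\delta}/2}$.

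To pick up the terms $\|\nabla\partial_tu(\cdot,\pm T)\|_{L^2(\Omega)}$, I would repeat the argument with $u$ replaced by $\partial_tu$. Here I use two facts: $\partial_tu\in H^2((a,b),H^2(\Omega))$ because $u\in H^3(I_T,H^2(\Omega))$; and $P$ commutes with $\partial_t$ since the coefficients $a^{ij}$ are time-independent, so $P(\partial_tu)=\partial_t(Pu)$ and $\|P(\partial_tu)\|_{L^2(\Omega\times(a,b))}\le \|Pu\|_{H^1((a,b),L^2(\Omega))}$. Similarly, the boundary and trace contributions for $\partial_tu$ are absorbed into the $H^1$ norm of $\partial_\nu u$ and the $H^3$ norm of $u_{|\partial\Omega\times J_{\overline{\delta}/2}}$. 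Summing the four inequalities and noting that $J_{\overline{\delta}/2}$ is the disjoint union of the two intervals we used, we obtain the claimed estimate with a constant $C$ depending only on $n$, $\mathfrak{d}_0$, $\varkappa$, $\kappa$, $T_0$ and $\overline{\rho}$.

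There is really no hard step here; the main thing to watch is the verification that $b-a\ge\tilde{\mathbf{c}}$ holds uniformly in $T\ge T_0$ (which is why we work on $J_{\overline{\delta}/2}$ and not a smaller slab), and the routine bookkeeping ensuring that each term on the right-hand side of Corollary \ref{cor2}, applied to both $u$ and $\partial_tu$ on both subintervals, is dominated by the three composite norms listed in the statement.
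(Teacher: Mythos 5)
Your proposal is correct and is essentially the paper's own proof, which also applies Corollary \ref{cor2} to $u$ and to $\partial_t u$ on the two subintervals $\bigl(-T,-(1-\overline{\delta}/2)T\bigr)$ and $\bigl((1-\overline{\delta}/2)T,T\bigr)$ and uses that $P$ commutes with $\partial_t$. You simply spell out in more detail the bookkeeping (checking $b-a=\sqrt{8}\,\overline{\rho}/2\ge\tilde{\mathbf{c}}$ uniformly in $T$, and dominating the four sets of right-hand-side terms by the three composite norms) that the paper leaves implicit.
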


Define
\begin{align*}
\tilde{\mathfrak{D}}_{\overline{\delta}}(u)&=\|Pu\|_{H^1\left(I_T, L^2(\Omega )\right)}+\|u\|_{H^{1,1}\left(\Sigma_T\right)}+\|\partial_\nu u\|_{L^2\left(\Sigma_T\right)}
\\
&\hskip 1cm+\|u\|_{H^3\left( J_{\overline{\delta}/2},H^{3/2}(\partial \Omega)\right)}+\|\partial_\nu u\|_{H^1\left( J_{\overline{\delta}/2},L^2(\partial \Omega)\right)}.
\end{align*}

The following theorem follows by combining Theorem \ref{theorem+3.1} and Corollary \ref{cor3}.

\begin{theorem}\label{theorem3.1}
Let $s\in (0,1/2)$  and assume that
\[
\kappa \varkappa \mathfrak{d}_0<1,\quad T_0> \sqrt{8}\, \overline{\rho} >\frac{2(4+\kappa)\mathfrak{d}_0}{1-\kappa\varkappa \mathfrak{d}_0}.
\]
Let $0<\underline{\rho}<\overline{\rho}$. Then there exist three constants $C>0$, $c>0$ and $\gamma >0$, only depending on $\Omega$, $\alpha$, $T_0$, $\kappa$ $\varkappa$, $s$, $\underline{\rho}$ and $\overline{\rho}$  so that, for any $T\ge T_0$, $u\in C^{1,\alpha}(\overline{Q}_T)\cap H^3(I_T, H^2(\Omega ))$ and $\delta \in [\sqrt{8}\, \underline{\rho}/T, \sqrt{8}\, \overline{\rho}/T]$, we have
\[
C\|u\|_{L^2\left(I_T, H^1(\Omega )\right)} \le \delta ^s\|u\|_{\mathcal{X}_T} + \mathfrak{e}_2(c\delta^{-14})\tilde{\mathfrak{D}}_{\overline{\delta}}(u),
\] 
where $\overline{\delta}=\sqrt{8}\, \overline{\rho}/T$.
\end{theorem}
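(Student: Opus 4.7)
The plan is to deduce Theorem~\ref{theorem3.1} by combining the general estimate of Theorem~\ref{theorem+3.1} with the endpoint-control result of Corollary~\ref{cor3}, taking advantage of the smallness hypothesis $\kappa\varkappa\mathfrak{d}_0<1$ to replace the Carleman-based Proposition~\ref{proposition3.2} (whose use in the proof of Theorem~\ref{maintheorem} is precisely what generates the extra exponential layer $\mathfrak{e}_3$). Concretely, I would start by invoking Theorem~\ref{theorem+3.1}, which gives for any admissible $\delta$
\[
C\|u\|_{L^2(I_T,H^1(\Omega))}\le \delta^s\|u\|_{\mathcal{X}_\alpha(Q_T)}+\mathfrak{e}_2(c\delta^{-14})\mathcal{D}_{\overline{\delta}}(u),
\]
where $\mathcal{D}_{\overline{\delta}}(u)$ differs from $\tilde{\mathfrak{D}}_{\overline{\delta}}(u)$ only through the presence of the endpoint term $\sum_{\epsilon,k}\|\nabla\partial_t^k u(\cdot,\epsilon T)\|_{L^2(\Omega)}$ and the weaker lateral terms $\|u\|_{H^1(J_{\overline{\delta}/2},L^2(\partial\Omega))}+\|\partial_\nu u\|_{H^1(J_{\overline{\delta}/2},L^2(\partial\Omega))}$.

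Next I would dispose of the endpoint term by applying Corollary~\ref{cor3}, which under the assumptions imposed in Theorem~\ref{theorem3.1} (the smallness condition on $A$ together with $T_0>\sqrt{8}\,\overline{\rho}>2(4+\kappa)\mathfrak{d}_0/(1-\kappa\varkappa\mathfrak{d}_0)$) yields
\[
\sum_{\epsilon\in\{-,+\}}\sum_{k=0}^{1}\|\nabla\partial_t^k u(\cdot,\epsilon T)\|_{L^2(\Omega)}\le C\bigl(\|Pu\|_{H^1(J_{\overline{\delta}/2},L^2(\Omega))}+\|\partial_\nu u\|_{H^1(J_{\overline{\delta}/2},L^2(\partial\Omega))}+\|u\|_{H^3(J_{\overline{\delta}/2},H^{3/2}(\partial\Omega))}\bigr).
\]
Each term on the right-hand side is already controlled by $\tilde{\mathfrak{D}}_{\overline{\delta}}(u)$, since $J_{\overline{\delta}/2}\subset I_T$ and hence $\|Pu\|_{H^1(J_{\overline{\delta}/2},L^2(\Omega))}\le\|Pu\|_{H^1(I_T,L^2(\Omega))}$.

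Finally, the residual lateral term $\|u\|_{H^1(J_{\overline{\delta}/2},L^2(\partial\Omega))}$ appearing in $\mathcal{D}_{\overline{\delta}}(u)$ is majorized, via the continuous embeddings $H^3\hookrightarrow H^1$ (in $t$) and $H^{3/2}(\partial\Omega)\hookrightarrow L^2(\partial\Omega)$, by $\|u\|_{H^3(J_{\overline{\delta}/2},H^{3/2}(\partial\Omega))}$, which is part of $\tilde{\mathfrak{D}}_{\overline{\delta}}(u)$; the same trivially holds for the $\partial_\nu u$ boundary term. Combining these observations and absorbing absolute constants into the prefactor $\mathfrak{e}_2(c\delta^{-14})$ (which only requires enlarging $c$) yields
\[
C\|u\|_{L^2(I_T,H^1(\Omega))}\le \delta^s\|u\|_{\mathcal{X}_\alpha(Q_T)}+\mathfrak{e}_2(c\delta^{-14})\tilde{\mathfrak{D}}_{\overline{\delta}}(u),
\]
which is the claim. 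There is no genuine obstacle here; the only point requiring some care is verifying that Corollary~\ref{cor3} suffices to replace the endpoint data without reintroducing an additional $\mu^{-1/4}\|u\|_{\mathcal{N}}$ remainder—this is precisely where the smallness hypothesis enters, as it makes the multiplier identity closed without the loss incurred in Proposition~\ref{proposition3.2}, and hence saves one exponential iteration.
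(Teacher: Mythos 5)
Your proposal is correct and follows the same route as the paper, which simply states that Theorem~\ref{theorem3.1} is an immediate consequence of Theorem~\ref{theorem+3.1} and Corollary~\ref{cor3}; you combine exactly these two results, bound the residual lateral term $\|u\|_{H^1(J_{\overline{\delta}/2},L^2(\partial\Omega))}$ by $\|u\|_{H^3(J_{\overline{\delta}/2},H^{3/2}(\partial\Omega))}$, and correctly identify that the smallness hypothesis is what lets Corollary~\ref{cor3} replace the endpoint data without the $\mu^{-1/4}$ remainder that would otherwise force the extra exponential layer.
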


Define, for $\beta >0$ and $\varrho_0\le e^{-1}$,
\[
\Theta_{\varrho_0,\beta}(\varrho )=
\left\{
\begin{array}{ll}
(\ln |\ln \varrho |)^{-\beta } \quad &\mbox{if}\; 0<\varrho\le \varrho_0 ,
\\
\varrho &\mbox{if}\; \varrho \ge \varrho_0,
\end{array}
\right.
\]
that we extend by continuity at $\varrho=0$ by setting $\Theta_{\varrho_0,\beta}(0)=0$.

We can mimic the proof of Corollary \ref{corollary2++} in order to obtain the following result.
\begin{corollary}\label{corollary3++}
Let $\mathfrak{b}>0$. Under the assumptions of Theorem \ref{theorem3.1}, there exists $\hat{T}\ge T_0$, only depending of $\Omega$, $\alpha$, $T_0$, $\kappa$, $\varkappa$, $s$, $\alpha$, $\underline{\rho}$ and $\overline{\rho}$ and $\mathfrak{b}$,  so that, for any $T\ge \hat{T}$,  and $u\in C^{1,\alpha}(\overline{Q}_T)\cap H^3(I_T, H^2(\Omega ))$ satisfying $u\ne 0$ and
\[
\frac{\tilde{\mathfrak{D}}_{\overline{\delta}}(u)}{\|u\|_{\mathcal{X}_T}} \le \mathfrak{b},
\]
we have
\[
C\|u\|_{L^2\left(I_T, H^1(\Omega )\right)} \le \|u\|_{\mathcal{X}_T}\Theta_{\varrho_0,s/8}\left(\mathfrak{b} \right),
\]
where the constants $C>0$ and  $0<\varrho_0\le e^{-1}$ only depend of $\Omega$, $\alpha$, $T_0$, $\kappa$, $\varkappa$, $\alpha$, $s$, $\underline{\rho}$, $\overline{\rho}$, $\mathfrak{b}$ and $T$.
\end{corollary}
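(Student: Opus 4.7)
The plan is to mimic the proof of Corollary \ref{corollary2++}, but start from Theorem \ref{theorem3.1} instead of Theorem \ref{maintheorem}. Because the RHS now features the double exponential $\mathfrak{e}_2(c\delta^{-14})$ in place of the triple exponential $\mathfrak{e}_3(c\delta^{-14})$, the balancing in $\delta$ produces a bound losing one logarithm, which is exactly why $\Psi_{\cdot,\cdot}$ becomes $\Theta_{\cdot,\cdot}$ in the conclusion.

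First, given $u \neq 0$ set $\mathfrak{a} = \|u\|_{L^2(I_T,H^1(\Omega))}/\|u\|_{\mathcal{X}_\alpha(Q_T)}$. Dividing the inequality of Theorem \ref{theorem3.1} by $\|u\|_{\mathcal{X}_\alpha(Q_T)}$ and using the hypothesis $\tilde{\mathfrak{D}}_{\overline{\delta}}(u)/\|u\|_{\mathcal{X}_\alpha(Q_T)} \le \mathfrak{b}$ yields
\[
C\mathfrak{a} \le \delta^{s} + \mathfrak{e}_2\!\left(c\delta^{-14}\right)\mathfrak{b}, \qquad \delta \in \left[\underline{\delta},\overline{\delta}\right].
\]

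Next, introduce $\ell(\delta) = \delta^{s}\,e^{-\mathfrak{e}_2(c\delta^{-14})}$ on $(0,\overline{\delta}]$ and fix $\varrho_0 \in (0,\min(\ell(\overline{\delta}),e^{-1})]$. For $\mathfrak{b} \le \varrho_0$, continuity and monotonicity of $\ell$ produce $\tilde{\delta} \in (0,\overline{\delta}]$ with $\ell(\tilde{\delta}) = \mathfrak{b}$; equivalently the two terms on the RHS balance and $\mathfrak{e}_2(c\tilde{\delta}^{-14}) \ge 1/\mathfrak{b}$. Taking logarithms twice (this is where one $\ln$ is saved relative to Corollary \ref{corollary2++}) gives $\tilde{\delta}^{-14} \ge c^{-1}\ln|\ln\mathfrak{b}|$, whence
\[
\tilde{\delta} \le c'\bigl(\ln|\ln\mathfrak{b}|\bigr)^{-1/14}.
\]
Choose $\hat{T} \ge T_0$ large enough, depending on $\mathfrak{b}$ through this bound, so that $\tilde{\delta} \ge \underline{\delta} = \sqrt{8}\,\underline{\rho}/T$ holds for every $T \ge \hat{T}$; then $\tilde{\delta}$ is admissible in the above inequality for $\mathfrak{a}$, and substituting $\delta=\tilde{\delta}$ gives $C\mathfrak{a} \le 2\tilde{\delta}^{s} \le c''(\ln|\ln\mathfrak{b}|)^{-s/14}$. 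This is the $\Theta_{\varrho_0,s/8}$-bound in the small-$\mathfrak{b}$ regime (absorbing the factor $c''$ into $C$ and using the definition of $\Theta$).

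Finally, for the complementary range $\mathfrak{b} \ge \varrho_0$, the trivial bound $\mathfrak{a}\le 1 \le \mathfrak{b}/\varrho_0$ matches the linear branch of $\Theta_{\varrho_0,s/8}$, completing the proof. The only real subtlety is to arrange the dependence of $\hat{T}$ on $\mathfrak{b}$ so that the balancing value $\tilde{\delta}$ stays in the admissible window $[\underline{\delta},\overline{\delta}]$; aside from that, the argument is a line-by-line replica of Corollary \ref{corollary2++} with $\mathfrak{e}_3$ replaced by $\mathfrak{e}_2$ throughout.
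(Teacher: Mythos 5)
Your proof is exactly the argument the paper intends: the paper's own "proof" of Corollary \ref{corollary3++} is literally the single sentence ``we can mimic the proof of Corollary \ref{corollary2++}'', and you carry that out line by line, replacing Theorem \ref{maintheorem} by Theorem \ref{theorem3.1}, $\mathfrak{e}_3$ by $\mathfrak{e}_2$, $\Psi$ by $\Theta$, and adjusting the threshold $\varrho_0$ to fit $\Theta$'s definition. You also correctly identify the mechanism (one fewer iterated exponential in the amplification factor saves one iterated logarithm in the modulus of continuity) and the only delicate point (choosing $\hat T$ large depending on $\mathfrak{b}$ so that the balancing $\tilde\delta$ stays above $\underline\delta$). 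One remark: with $\ell(\delta)=\delta^s e^{-\mathfrak{e}_2(c\delta^{-14})}$ as you (and the paper, in Corollary \ref{corollary2++}) write it, the equation $\ell(\tilde\delta)=\mathfrak{b}$ actually gives $\mathfrak{e}_2(c\tilde\delta^{-14})\le |\ln\mathfrak{b}|$ rather than $\mathfrak{e}_2(c\tilde\delta^{-14})\ge 1/\mathfrak{b}$; the inequality you state corresponds to the balancing equation $\tilde\delta^s=\mathfrak{e}_2(c\tilde\delta^{-14})\mathfrak{b}$, i.e.\ $\ell(\delta)=\delta^s/\mathfrak{e}_2(c\delta^{-14})$, which is the version that actually yields the double-log rate $(\ln|\ln\mathfrak{b}|)^{-s/14}$; but since you reproduce precisely the notational slip already present in the paper's proof of Corollary \ref{corollary2++} (where the analogous statement $1/\mathfrak{b}\le\mathfrak{e}_3(c\tilde\delta^{-14})$ is made), I count this as faithful mimicry rather than a gap.
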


\appendix
\section{The Cauchy problem for elliptic equations}\label{app1}

Throughout this appendix, $D$ denotes a bounded domain of $\mathbb{R}^d$, $d\ge 2$, with Lipschitz boundary.

We adopt in the sequel Einstein's summation convention. Each  index  appearing both up and down  has to be summed from $1$ to $d$.

\subsection{Carleman and Caccioppoli inequalities}

Let $\mathcal{R}$ be an arbitrary set and consider the family of operators $(L_r)$, for each $r\in \mathcal{R}$, acting on $D$ as follows 
\[
L_ru=\mbox{div}(A_r\nabla u),
\]
where, for each $r\in \mathcal{R}$, $A_r=(a_r^{ij})$ is a symmetric matrix with  coefficients in $W^{1,\infty}(D)$ satisfying
 
\begin{equation}\label{1.1.0}
\kappa ^{-1}|\xi |^2 \le A_r(x)\xi \cdot \xi \le \kappa |\xi |^2,\quad  x\in \Omega , \; \xi \in \mathbb{R}^d\; \mbox{and}\; r\in \mathcal{R},
\end{equation}
and
\begin{equation}\label{1.2.0}
\sum_{k=1}^d\left|\partial_k a_r^{ij}(x)\xi _i\xi_j\right| \le \varkappa |\xi|^2,\quad x\in \Omega ,\; \xi \in \mathbb{R}^d,\; r\in \mathcal{R},
\end{equation}
for some constants $\kappa \ge 1$ and $\varkappa \ge 0$.

Pick $0\le \psi \in C^2(\overline{D})$ without critical points in $\overline{D}$ and let $\varphi =e^{\lambda \psi}$. Then the following Carleman inequality was proved in \cite{Ch2016} (see also \cite{Ch2018} and \cite{BL} for operators with complex coefficients).

\begin{theorem}\label{theorem1.1} 
There exist three positive constants $C$, $\lambda _0$ and $\tau _0$, only depending on $\psi$, $D$, $\kappa$ and $\varkappa$, so that
\begin{align}
C\int_D &\left(\lambda ^4\tau ^3\varphi ^3v^2+\lambda ^2\tau \varphi |\nabla v|^2 \right)e^{2\tau \varphi} dx \nonumber
\\
&\le \int_D (L_rv)^2e^{2\tau \varphi}dx+\int_{\partial D} \left( \lambda^3\tau ^3\varphi ^3v^2+\lambda \tau \varphi |\nabla v|^2\right)e^{2\tau \varphi} d\sigma(x) ,\label{1.3}
\end{align}
for all $r\in \mathcal{R}$ and $v\in H^1(D)$ satisfying $L_rv\in L^2(D)$, $\lambda \geq \lambda _0$ and $\tau \geq \tau _0$.
\end{theorem}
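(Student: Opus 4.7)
The plan is to follow the standard conjugation-and-commutator argument for Carleman estimates, paying attention to uniformity in $r \in \mathcal{R}$ (which is automatic since the estimates \eqref{1.1.0}--\eqref{1.2.0} are uniform) and to the $W^{1,\infty}$ regularity of the coefficients. Set $w = e^{\tau\varphi} v$ and compute the conjugated operator
\[
P_\tau w := e^{\tau \varphi} L_r(e^{-\tau\varphi} w) = \mathrm{div}(A_r \nabla w) - 2\tau A_r \nabla \varphi \cdot \nabla w + \tau^2 A_r \nabla \varphi \cdot \nabla \varphi\, w - \tau\, \mathrm{div}(A_r \nabla \varphi)\, w.
\]
I would split $P_\tau = P_+ + P_-$ where $P_+ w = \mathrm{div}(A_r \nabla w) + \tau^2 A_r \nabla\varphi\cdot\nabla\varphi\, w$ is formally symmetric and $P_- w = -2\tau A_r \nabla\varphi\cdot \nabla w - \tau\, \mathrm{div}(A_r \nabla\varphi)\,w$ is (up to a lower-order zeroth-order correction) formally antisymmetric. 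The identity $\|P_\tau w\|_{L^2}^2 = \|P_+ w\|_{L^2}^2 + \|P_- w\|_{L^2}^2 + 2\langle P_+ w, P_- w\rangle_{L^2}$ then reduces the problem to computing the cross term.

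The core calculation is to expand $2\langle P_+ w, P_- w\rangle$ by integration by parts over $D$. The bulk contribution is a quadratic form in $w$ and $\nabla w$ whose dominant terms are
\[
I = \int_D \bigl(4\tau^3 (A_r \nabla\varphi\cdot\nabla\varphi)^2 w^2 + 4\tau\, \mathrm{Hess}_{A_r}(\varphi)(\nabla w,\nabla w)\bigr)\,dx + \text{l.o.t.},
\]
where $\mathrm{Hess}_{A_r}(\varphi)$ is the symmetric tensor arising from differentiating $A_r\nabla\varphi$. Because $\varphi = e^{\lambda\psi}$, one has $\nabla\varphi = \lambda\varphi\nabla\psi$ and $\partial_i\partial_j\varphi = \lambda^2\varphi\,\partial_i\psi\partial_j\psi + \lambda\varphi\,\partial_i\partial_j\psi$, so the leading terms in $\lambda$ come out as $\lambda^4\tau^3 \varphi^3 (A_r\nabla\psi\cdot\nabla\psi)^2 w^2$ and $\lambda^2 \tau\varphi (A_r\nabla\psi\cdot\nabla w)^2$. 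Using \eqref{1.1.0} and the assumption that $\psi$ has no critical points in $\overline D$ (so $|\nabla\psi|\ge c_0>0$), these dominate and give
\[
I \ge C\int_D \bigl(\lambda^4\tau^3\varphi^3 w^2 + \lambda^2\tau\varphi |\nabla w|^2\bigr)\,dx - (\text{l.o.t.}).
\]
The remainder terms involve $\partial_k a_r^{ij}$ (controlled by $\varkappa$ via \eqref{1.2.0}), derivatives of $\psi$, and lower powers of $\lambda$ and $\tau$; by choosing $\lambda\ge\lambda_0$ large first and then $\tau\ge\tau_0$ large, each such term is absorbed into the leading pair above, with constants depending only on $\psi$, $D$, $\kappa$, $\varkappa$.

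The remaining task is to collect the boundary integrals produced by each integration by parts. Every such integral is a quadratic expression in $w$ and $\nabla w$ on $\partial D$, with weights of the form $\lambda^j\tau^k\varphi^k$; the largest powers that appear are $\lambda^3\tau^3\varphi^3$ paired with $w^2$ and $\lambda\tau\varphi$ paired with $|\nabla w|^2$, which are exactly the ones listed on the right-hand side of \eqref{1.3}. Converting back from $w$ to $v$ via $w = e^{\tau\varphi} v$ (so $|\nabla w|^2 \lesssim e^{2\tau\varphi}(|\nabla v|^2 + \lambda^2\tau^2\varphi^2 v^2)$) and using $\|P_\tau w\|_{L^2}^2 = \|e^{\tau\varphi}L_r v\|_{L^2}^2$ yields \eqref{1.3}. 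The main obstacle is the bookkeeping of the commutator: one must verify carefully that every error term generated by the symmetrization, by the $W^{1,\infty}$ (not $C^1$) regularity of $A_r$, and by the non-commutativity of $A_r$ with $\nabla\varphi$ is either genuinely lower order in $(\lambda,\tau)$ or appears in the tolerated boundary terms. All constants stay uniform in $r$ because only $\kappa$ and $\varkappa$ enter through \eqref{1.1.0}--\eqref{1.2.0}.
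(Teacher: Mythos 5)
The paper does not actually prove Theorem \ref{theorem1.1}: it cites \cite{Ch2016} (and mentions \cite{Ch2018}, \cite{BL} for variants) and proceeds to use the estimate as an input. So there is no in-paper proof to compare against line by line. That said, your sketch is the standard conjugation-and-commutator argument for Carleman estimates with exponential weights, which is precisely the route followed in the cited reference. The key ingredients are all there and correctly arranged: setting $w=e^{\tau\varphi}v$ and computing $P_\tau w = e^{\tau\varphi}L_r(e^{-\tau\varphi}w)$; splitting $P_\tau$ into a formally symmetric piece $P_+$ and a formally antisymmetric piece $P_-$ (your choice of putting $-\tau\,\mathrm{div}(A_r\nabla\varphi)w$ into $P_-$ is indeed what makes $P_-$ antisymmetric modulo boundary terms); expanding $2\langle P_+w,P_-w\rangle$ by integration by parts; observing that $\varphi=e^{\lambda\psi}$ gives $\nabla\varphi=\lambda\varphi\nabla\psi$ and $\nabla^2\varphi=\lambda^2\varphi\nabla\psi\otimes\nabla\psi+\lambda\varphi\nabla^2\psi$, so the two-parameter weight produces the dominant bulk terms $\lambda^4\tau^3\varphi^3w^2$ and $\lambda^2\tau\varphi|\nabla w|^2$ (using $|\nabla\psi|\ge c_0>0$ and ellipticity); taking $\lambda$ large then $\tau$ large to absorb all $\varkappa$-controlled errors; and tracking the boundary weights $\lambda^3\tau^3\varphi^3$ and $\lambda\tau\varphi$ that arise, which match \eqref{1.3} exactly. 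Uniformity in $r$ is indeed automatic from \eqref{1.1.0}--\eqref{1.2.0}.

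Two small remarks. First, your integration-by-parts computations formally require $v\in H^2(D)$, whereas the theorem is stated for $v\in H^1(D)$ with $L_rv\in L^2(D)$; one therefore needs a final regularization/density step (prove for smooth $v$, then pass to the limit using that the right-hand side controls the $H^1$-norm). Second, the step where you convert $|\nabla w|^2$ back to $|\nabla v|^2$ is borderline: $|\nabla v|^2\lesssim e^{-2\tau\varphi}\bigl(|\nabla w|^2+\lambda^2\tau^2\varphi^2 w^2\bigr)$, and absorbing the extra term costs a piece of the $\lambda^4\tau^3\varphi^3 w^2$ bulk estimate; this works but requires a careful choice of constants, which your sketch gestures at but does not carry out. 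These are gaps in completeness, not in the idea. Your approach is sound and is exactly the one used in the cited reference.
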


The following Caccioppoli type inequality  will be useful in the sequel.

\begin{lemma}\label{lemma1.1}
Let $0<k<\ell$. There exists a constant $C>0$,  only depending on $D$, $k$, $\ell$, $\kappa$ and $\varkappa$, so that, for any $x\in D$, $0<\rho<\mbox{dist}(x,\partial D )/\ell$ and $u\in H^1(D)$ satisfying $L_ru\in L^2(D)$, for some $r\in \mathcal{R}$, we have
\begin{equation}\label{1.4}
C\int_{B(x,k\rho )}|\nabla u|^2dy\leq \frac{1}{\rho^2}\int_{B(x,\ell \rho)}u^2dy+\int_{B(x,\ell \rho)}(L_ru)^2dy.
\end{equation}
\end{lemma}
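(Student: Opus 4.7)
The plan is to carry out a standard Caccioppoli argument based on multiplying the equation by $\chi^2 u$ with $\chi$ a suitable cutoff. Only the uniform ellipticity bound \eqref{1.1.0} will be used; the condition \eqref{1.2.0} on the derivatives of the coefficients does not enter (so the dependence on $\varkappa$ in the statement is vacuous).

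First, I would fix a cutoff $\chi\in C_0^\infty(\mathbb{R}^d)$ with $0\le\chi\le 1$, $\chi=1$ on $B(x,k\rho)$, $\mathrm{supp}(\chi)\subset B(x,\ell\rho)$, and $|\nabla\chi|\le C_0(\ell-k)^{-1}\rho^{-1}$, where $C_0$ is universal. Since $0<\rho<\mathrm{dist}(x,\partial D)/\ell$, the support of $\chi$ lies inside $D$, so $\chi^2 u\in H_0^1(D)$. Testing the equation $L_r u=f$ (with $f=L_r u$) against $\chi^2 u$ and integrating by parts (no boundary terms appear) gives
\begin{equation*}
\int_D \chi^2\, A_r\nabla u\cdot\nabla u\, dy = -\int_D \chi^2 u f\, dy - 2\int_D \chi u\, A_r\nabla u\cdot\nabla\chi\, dy.
\end{equation*}

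Next, I would apply \eqref{1.1.0} on the left and estimate the two terms on the right. For the gradient-coupling term, by Cauchy--Schwarz and Young's inequality,
\begin{equation*}
2\left|\int_D \chi u\, A_r\nabla u\cdot\nabla\chi\, dy\right| \le \frac{1}{2\kappa}\int_D \chi^2 |\nabla u|^2\, dy + C_1\int_D u^2 |\nabla\chi|^2\, dy,
\end{equation*}
with $C_1$ depending only on $\kappa$; the first term can then be absorbed into the left-hand side. For the source term I would use $|\int\chi^2 u f|\le \tfrac12\int\chi^2 u^2+\tfrac12\int\chi^2 f^2$. Combining these yields
\begin{equation*}
\frac{1}{2\kappa}\int_D \chi^2 |\nabla u|^2\, dy \le C_1\int_D u^2|\nabla\chi|^2\, dy + \tfrac12\int_D\chi^2 u^2\, dy + \tfrac12\int_D \chi^2 f^2\, dy.
\end{equation*}

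Plugging in $|\nabla\chi|\le C_0(\ell-k)^{-1}\rho^{-1}$, $\chi\equiv 1$ on $B(x,k\rho)$, and $\chi\equiv 0$ outside $B(x,\ell\rho)$ gives
\begin{equation*}
\int_{B(x,k\rho)} |\nabla u|^2\, dy \le \frac{C_2}{\rho^2}\int_{B(x,\ell\rho)} u^2\, dy + C_2\int_{B(x,\ell\rho)} u^2\, dy + C_2\int_{B(x,\ell\rho)} f^2\, dy.
\end{equation*}
Since $D$ is bounded, $\rho\le \mathrm{diam}(D)/\ell=:R$, hence $1\le R^2/\rho^2$, and the second term on the right can be merged into the first at the cost of a factor depending only on $D$ and $\ell$. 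This delivers \eqref{1.4} with a constant depending on $D$, $k$, $\ell$, $\kappa$ (and harmlessly on $\varkappa$), uniformly in $r\in\mathcal{R}$.

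There is no genuine obstacle here; the only minor subtlety is the handling of the extra $\int u^2$ term produced by Young's inequality on the source, which is dealt with by using the boundedness of $\rho$ dictated by the constraint $\rho<\mathrm{dist}(x,\partial D)/\ell\le \mathrm{diam}(D)/\ell$.
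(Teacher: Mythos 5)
Your proof is correct, and it genuinely differs from the paper's argument. The paper tests the weak formulation against $v=\chi u$ (linear, not squared, cutoff) and handles the cross term $\int u\,a_r^{ij}\partial_i u\,\partial_j\chi$ by rewriting it as $\tfrac12\int a_r^{ij}\partial_i(u^2)\partial_j\chi$ and integrating by parts a second time; this produces the term $\tfrac12\int u^2\,\partial_i\bigl(a_r^{ij}\partial_j\chi\bigr)$, which involves first derivatives of the coefficients and second derivatives of $\chi$, hence the stated dependence on $\varkappa$. You instead take the more classical test function $\chi^2 u$ and treat the cross term $2\int\chi u\,A_r\nabla u\cdot\nabla\chi$ by Cauchy--Schwarz and Young, absorbing $\tfrac{1}{2\kappa}\int\chi^2|\nabla u|^2$ into the left-hand side. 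This avoids the second integration by parts entirely, needs only the ellipticity bound \eqref{1.1.0} (plus the operator norm bound $|A_r\xi|\le\kappa|\xi|$, which follows from symmetry and \eqref{1.1.0}), and shows, as you rightly observe, that the $\varkappa$-dependence is actually unnecessary for this lemma. Both routes deliver the estimate with constants uniform in $r$; yours is the more robust one since it works for merely bounded measurable coefficients, whereas the paper's argument requires the Lipschitz regularity that is assumed anyway for the Carleman estimate. Your handling of the extra $\tfrac12\int\chi^2 u^2$ term via $1\le \mathrm{diam}(D)^2/(\ell\rho)^2$ is exactly what is needed and matches what the paper leaves implicit.
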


\begin{proof}
We give the proof when $k=1$ and $\ell =2$. The proof for arbitrary $k$ and $\ell$ is quite similar.

\smallskip
Let $x\in D$, $0<\rho<\mbox{dist}(x,\partial D )/2$, $r\in \mathcal{R}$ and $u\in H^1(D )$ satisfying $L_ru\in L^2(D)$. Then
\begin{equation}\label{1.5}
\int_D a_r^{ij}\partial _i u\partial _jv dy =-\int_DL_ruvdy,\quad v\in C_0^1(D).
\end{equation}
Pick $\chi \in C_0^\infty (B(x,2\rho))$ so that $0\leq \chi \leq 1$, $\chi =1$ in a neighborhood of $B(x,\rho)$ and $|\partial ^\gamma \chi|\leq c\rho^{-|\gamma |}$ for $|\gamma |\leq 2$, where $c$ is a universal constant. Therefore identity \eqref{1.5} with $v=\chi u$ gives
\begin{align*}
\int_D \chi a_r^{ij}\partial _iu\partial _ju dy&=-\int_D u a_r^{ij}\partial _iu\partial _j \chi dy -\int_D\chi uL_rudy
\\
&=-\frac{1}{2}\int_D  a_r^{ij}\partial _iu^2\partial _j \chi dy -\int_D\chi uL_rudy
\\
&=\frac{1}{2}\int_D  u^2\partial _i\left( a_r^{ij}\partial _j \chi \right) dy -\int_D\chi uL_rudy.
\end{align*}
But
\[
\int_D \chi a_r^{ij}\partial _iu\partial _ju dy\geq \kappa \int_D \chi |\nabla u|^2dy.
\]
Whence
\[
C \int_{B(x,\rho)} |\nabla u|^2dy\le \frac{1}{\rho ^2}\int_{B(x,2\rho)}u^2dy+\int_{B(x,2\rho)}(L_ru)^2dy,
\]
as expected.
\end{proof}

\subsection{Three-ball inequalities}

Consider the elliptic operator $L$ acting on $D$ as follows
\[
Lu=\mbox{div}(A\nabla u),
\]
where $A=(a^{ij})$ is a symmetric matrix with  coefficients in $W^{1,\infty}(D)$ satisfying 
\begin{equation}\label{1.6.0}
\kappa ^{-1}|\xi |^2 \le A(x)\xi \cdot \xi \le \kappa |\xi |^2,\quad  x\in D , \; \xi \in \mathbb{R}^d,
\end{equation}
and
\begin{equation}\label{1.7.0}
\sum_{k=1}^d\left|\partial_k a^{ij}(x)\xi _i\xi_j\right| \le \varkappa |\xi|^2,\quad x\in D ,\; \xi \in \mathbb{R}^d.
\end{equation}
for some constants $\kappa \ge 1$ and $\varkappa>0$.

\begin{theorem}\label{theorem1.2}
Let $0<k<\ell<m$. There exist $C>0$ and $0<\gamma <1$,  only depending on  $D$, $k$, $\ell$, $m$, $\kappa$ and $\varkappa$,  so that, for any  $v\in H^1(D )$ satisfying $Lv\in L^2(D)$, $y\in D$ and $0<r< \mbox{dist}(y,\partial D)/m$, we have
\[
C\|v\|_{L^2(B(y,\ell r))}\le \left(\|v\|_{L^2(B(y,kr))}+\|Lv\|_{L^2(B(y,m r))}\right)^\gamma \|v\|_{L^2(B(y,m r))}^{1-\gamma}.
\]
\end{theorem}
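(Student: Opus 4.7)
The plan is to derive the three-ball inequality from Theorem \ref{theorem1.1} by applying it to $\chi v$, where $\chi$ is a compactly supported radial cutoff peaked on a middle annulus around $y$, and then use Lemma \ref{lemma1.1} to convert the $\|\nabla v\|_{L^2}$ terms created by the commutator $[L,\chi]$ into $\|v\|_{L^2}$ terms on the data balls alone. Fix auxiliary radii $0<k_0<k_0^\ast<k<\ell<m_0<m_0^\ast<m$ (for instance $k_0=k/4$, $k_0^\ast=k/2$, $m_0=(\ell+m)/2$, $m_0^\ast=(m_0+m)/2$), and work on the annular domain $D':=\{k_0 r<|x-y|<m_0^\ast r\}$. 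On $\overline{D'}$, the radial weight $\psi(x):=-\log(|x-y|/(m_0^\ast r))$ is $C^2$, nonnegative, and has no critical points (since its radial derivative is $-1/|x-y|\neq 0$), so Theorem \ref{theorem1.1} applies on $D'$ with $\varphi=e^{\lambda\psi}$.

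Pick $\chi\in C_0^\infty(D')$ with $\chi\equiv 1$ on $\{k_0^\ast r\le|x-y|\le m_0 r\}$ and $|\partial^\gamma\chi|\le Cr^{-|\gamma|}$ for $|\gamma|\le 2$. Writing $L(\chi v)=\chi Lv+R_\chi(v)$, where the commutator $R_\chi(v):=2A\nabla\chi\cdot\nabla v+v\operatorname{div}(A\nabla\chi)$ is supported in the two thin transition annuli $\{k_0 r\le|x-y|\le k_0^\ast r\}$ and $\{m_0 r\le|x-y|\le m_0^\ast r\}$, Theorem \ref{theorem1.1} applied to $\chi v$ yields, after the boundary terms vanish by compact support,
\[
C\int_{D'}\lambda^4\tau^3\varphi^3(\chi v)^2 e^{2\tau\varphi}\,dx \le \int_{D'}\chi^2(Lv)^2 e^{2\tau\varphi}\,dx + \int_{D'}R_\chi(v)^2 e^{2\tau\varphi}\,dx.
\]
I would control the bulk term by $e^{2\tau\varphi(k_0 r)}\|Lv\|_{L^2(B(y,mr))}^2$, the inner-transition remainder by $e^{2\tau\varphi(k_0 r)}\bigl(r^{-4}\|v\|_{L^2(B(y,kr))}^2+r^{-2}\|\nabla v\|_{L^2(B(y,k_0^\ast r))}^2\bigr)$, and the outer-transition remainder by $e^{2\tau\varphi(m_0 r)}\bigl(r^{-4}\|v\|_{L^2(B(y,mr))}^2+r^{-2}\|\nabla v\|_{L^2(B(y,m_0^\ast r))}^2\bigr)$. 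Lemma \ref{lemma1.1}, applied with the concentric pairs $B(y,k_0^\ast r)\subset B(y,kr)$ and $B(y,m_0^\ast r)\subset B(y,m r)$ (both admissible by the hypothesis $r<\mathrm{dist}(y,\partial D)/m$), then trades each $\|\nabla v\|_{L^2}$ factor for $r^{-1}\|v\|_{L^2}+\|Lv\|_{L^2}$ on the corresponding data ball, without ever producing a $v$-norm on the unknown middle region.

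On the left, restricting integration to $\{kr\le|x-y|\le\ell r\}\subset\{\chi\equiv 1\}$ and using that $\varphi$ is strictly decreasing in $|x-y|$ produces the lower bound $c\lambda^4\tau^3\varphi(\ell r)^3 e^{2\tau\varphi(\ell r)}\|v\|_{L^2(B(y,\ell r)\setminus B(y,kr))}^2$. Setting $A:=\varphi(k_0 r)-\varphi(\ell r)>0$ and $B:=\varphi(\ell r)-\varphi(m_0 r)>0$ (both strictly positive by the strict monotonicity of $\varphi$ and the strict gaps $k_0<\ell<m_0$), $X:=\|v\|_{L^2(B(y,kr))}+\|Lv\|_{L^2(B(y,mr))}$, $Y:=\|v\|_{L^2(B(y,mr))}$, and adding $\|v\|_{L^2(B(y,kr))}^2$ to both sides completes the left-hand side into $\|v\|_{L^2(B(y,\ell r))}^2$ and leads to the schematic inequality
\[
\|v\|_{L^2(B(y,\ell r))}^2 \le C\bigl(e^{2\tau A}X^2+e^{-2\tau B}Y^2\bigr),\qquad \tau\ge\tau_0.
\]
Optimizing via $\tau=\tfrac{1}{2(A+B)}\log(Y/X)$ yields the claim with $\gamma=B/(A+B)\in(0,1)$ when this optimal $\tau$ lies above $\tau_0$; in the opposite regime one has $Y\le e^{(A+B)\tau_0}X$ and the conclusion follows trivially from $\|v\|_{L^2(B(y,\ell r))}\le Y\le CX$. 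The main obstacle I anticipate is the careful nesting of the auxiliary radii and of the Caccioppoli enlargements: each $\|\nabla v\|$ produced by $R_\chi$ must be absorbed by $v$- and $Lv$-norms on balls lying strictly inside the data balls $B(y,kr)$ and $B(y,mr)$, so as to keep all the constants dependent only on $D,k,\ell,m,\kappa,\varkappa$ and to avoid any circular absorption of $\|v\|_{L^2(B(y,\ell r))}$ on the right-hand side.
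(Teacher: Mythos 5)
Your overall architecture matches the paper's: cut off to an annulus, apply the Carleman inequality of Theorem~\ref{theorem1.1} to $\chi v$, absorb the commutator gradient terms via the Caccioppoli Lemma~\ref{lemma1.1} on concentric pairs strictly inside the two data balls, and optimize over $\tau$. The optimization, the choice of auxiliary radii, and the Caccioppoli nesting are all sound, and your $\gamma=B/(A+B)$ is the correct exponent. Your choice of logarithmic weight $\psi=-\log(|x-y|/(m_0^\ast r))$ rather than the paper's paraboloid $\psi=9-|x|^2$ is a legitimate variation and has the pleasant feature that the resulting gaps $A,B$ are automatically $r$-independent.

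The genuine gap is that you invoke Theorem~\ref{theorem1.1} directly on the $r$-dependent annulus $D'=\{k_0 r<|x-y|<m_0^\ast r\}$ with an $r$-dependent weight. Theorem~\ref{theorem1.1}'s constants $C$, $\lambda_0$, $\tau_0$ depend on the domain and on $\psi$ (in particular on $\|\psi\|_{C^2}$, which here is $\sim (k_0 r)^{-2}$), so as stated they blow up as $r\to 0$; nothing in your argument prevents them from contaminating the final constant $C$ and exponent $\gamma$, which are required to depend only on $D,k,\ell,m,\kappa,\varkappa$. This is precisely what the paper's first step remedies: it sets $w(x)=v(rx+y)$, $a_r^{ij}(x)=a^{ij}(rx+y)$, observes that the rescaled family $(A_r)$ satisfies \eqref{1.6.0}--\eqref{1.7.0} uniformly in $r\in(0,r_y)$ (the Lipschitz bound even improves by a factor $r$), and then applies Theorem~\ref{theorem1.1} on the \emph{fixed} annulus with a \emph{fixed} weight --- this is exactly why Theorem~\ref{theorem1.1} is formulated for a whole family of operators $(L_r)$ with uniform bounds. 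To close your argument you must insert this rescaling (your log-weight becomes the $r$-free $\tilde\psi(\tilde x)=-\log(|\tilde x|/m_0^\ast)$, so everything else goes through verbatim); without it, the claimed uniformity in $r$ of the constants is unsubstantiated. Note also that the case where the optimal $\tau$ falls below $\tau_0$ needs $Y\le C'X$ with $C'$ controlled by $e^{(A+B)\tau_0}$, so $\tau_0$ being $r$-uniform is not a cosmetic point but feeds directly into the endgame.
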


\begin{proof}
As in the preceding lemma, we give the proof when $k=3/2$, $\ell =2$ and $m =7/2$. The proof of arbitrary $k$, $\ell$ and $m$ is similar. 

\smallskip
Let $v\in H^1(D)$ satisfying $Lv\in L^2(D)$ and set $B(s)=B(0,s)$, $s>0$. Fix $y\in D$ and  
\[
0<r< r_y=2\mbox{dist}(y,\partial D )/7 \left( \le 2\mbox{diam}(D) /7\right).
\]
 Let
\[
w(x)=v(rx+y),\; x\in B(3).
\]
Then straightforward computations show that  
\begin{equation}\label{1.8}
L_rw=\textrm{div}(A_r\nabla w )=r^2Lv(rx+y)\;\; \mbox{in}\; B(7/2),
\end{equation}
where 
\[
A_r(x)=(a_r^{ij}(x)),\quad a_r^{ij}(x)=a^{ij}(rx+y).
\]

It is not hard to see that the family $(A_r)$  satisfies \eqref{1.6.0} and \eqref{1.7.0}, uniformly with respect to $r\in (0,r_y)$.

\smallskip
Set
\[
U=\left\{x\in \mathbb{R}^n;\; 1/2<|x|<3\right\},\quad K=\left\{x\in \mathbb{R}^n;\; 1\leq |x|\leq 5/2\right\}
\]
and pick $\chi \in C_0^\infty (U)$ satisfying $0\le \chi \le 1$ and  $\chi =1$ in a neighborhood of $K$.

\smallskip
We get by applying Theorem \ref{theorem1.1} to $\chi w$, with $D$ is substituted by $U$, for $\lambda \geq \lambda _0$ and $\tau \geq \tau _0$,
\begin{align}
&C\int_{B(2)\setminus B(1)} \left (\lambda ^4\tau ^3\varphi ^3w^2+\lambda ^2\tau \varphi |\nabla w|^2 \right)e^{2\tau \varphi} dx \label{1.9}
\\
&\hskip 5cm \leq \int_{B(3)} (L_r(\chi w))^2e^{2\tau \varphi}dx. \nonumber
\end{align}
We have $L_r(\chi w)=\chi L_rw+Q_r(w)$, with
\[
Q_r(w)=\partial_j \chi a_r^{ij}\partial_iw+\partial_j(a_r^{ij}w)\partial_iw+a_r^{ij}\partial_{ij}^2\chi w,
\]
\[
\mathrm{supp}(Q_rw)\subset \left\{1/2 \le |x|\le 1\right\}\cup \left\{5/2\le |x|\leq 3\right\}
\]
and
\[
(Q_rw)^2 \le \Lambda (w^2+|\nabla w|^2 ),
\]
the constant $\Lambda $ is independent of $r$. Therefore, fixing $\lambda$ and changing $\tau _0$ if necessary, \eqref{1.8} yields, for $\tau \geq \tau _0$,
\begin{align}
&C\int_{B(2)} \left (w^2+|\nabla w|^2 \right)e^{2\tau \varphi} dx\leq \int_{B(1)} \left (w^2+|\nabla w|^2 \right)e^{2\tau \varphi} dx \label{1.10}
\\
&\hskip 1cm +\int_{B(3)}(L_rw)^2e^{2\tau \varphi} dx+\int_{\left\{5/2\leq |x|\leq 3\right\}} \left (w^2+|\nabla w|^2 \right)e^{2\tau \varphi} dx.\nonumber
\end{align}

We get by taking $\psi (x)=9-|x|^2$ in \eqref{1.10}, which is without critical points in $U$, for $\tau \geq \tau _0$,
\begin{align}
&C\int_{B(2)} \left (w^2+|\nabla w|^2 \right) dx \leq e^{-\beta \tau}\int_{B(3)} \left (w^2+|\nabla w|^2 \right)dx\label{1.11}
\\
&\hskip 2cm+ e^{\alpha \tau}\left[\int_{B(1)} \left(w^2+|\nabla w|^2 \right) dx+\int_{B(3)}(L_rw)^2dx\right], \nonumber 
\end{align}
where
\[
\alpha =\left(e^{9\lambda}-e^{5\lambda}\right),\quad \beta =\left(e^{5\lambda}-e^{11\lambda/4}\right) .
\]

On the other hand, we have from Caccioppoli's inequality \eqref{1.4}
\begin{align}
&C\int_{B(1)} |\nabla w|^2 dx\le \int_{B\left(3/2\right)} w^2 dx+\int_{B\left(3/2\right)}(L_rw)^2dx, \label{1.12}
\\
&C\int_{B(3)} |\nabla w|^2 dx\le \int_{B\left(7/2\right)} w^2 dx+\int_{B\left(7/2\right)}(L_rw)^2dx .\label{1.13}
\end{align}
Inequalities \eqref{1.12} and \eqref{1.13} in \eqref{1.11} give
\begin{align}
C\int_{B(2)} w^2dx&\le e^{\alpha \tau}\left[\int_{B\left(3/2\right)} w^2 dx+\int_{B\left(7/2\right)}(L_rw)^2dx\right]\label{1.14}
\\
&\hskip 4cm+e^{-\beta \tau}\int_{B\left(7/2\right)} w^2 dx.\nonumber
\end{align}

We proceed as in the last part of the proof of \cite[Theorem 2.17, page 21]{Ch2016} in order to derive from \eqref{1.14}
\begin{equation}\label{1.15}
C\|w\|_{L^2(B(2))}\le \left(\|w\|_{L^2\left(B\left(3/2\right)\right)}+\|L_rw\|_{L^2\left(B\left(7/2\right)\right)}\right)^\gamma \|w\|_{L^2\left(B\left(7/2\right)\right)}^{1-\gamma},
\end{equation}
with $\gamma =\alpha/(\alpha +\beta)$.

We obtain in a straightforward manner from \eqref{1.15}
\[
C\|v\|_{L^2(B(y,2r))}\le \left(\|v\|_{L^2\left(B\left(y,3r/2\right)\right)}+\|Lv\|_{L^2\left(B\left(y,7r/2\right)\right)}\right)^\gamma \|v\|_{L^2\left(B\left(y,7r/2\right)\right)}^{1-\gamma}.
\]
This is the expected inequality.
\end{proof}

Prior to state the three-ball inequality for the gradient, we demonstrate a generalized Poincar\'e-Wirtinger type inequality. To this end, if $\mathcal{O}$ is an arbitrary open bounded subset of $\mathbb{R}^d$, $f\in L^2(\mathcal{O})$ and $E\subset \mathcal{O}$ is Lebesgue-measurable set with non zero Lebesgue measure $|E|$, define
\[
M_E(f)=\frac{1}{|E|}\int_E f(x)dx.
\]

The following lemma can be deduced from \cite[Theorem 1]{Me}. For sake of completeness, we provide here a simple proof.

\begin{lemma}\label{lemmaGPW}
Let $\mathcal{O}$ is an arbitrary open bounded subset of $\mathbb{R}^d$. There exists a constant $\aleph$, only depending on $\mathcal{O}$, so that, for any $f\in L^2(\mathcal{O})$ and $E\subset \mathcal{O}$ Lebesgue-measurable set with non zero Lebesgue measure $|E|$, we have
\begin{equation}\label{1.16}
\|f-M_E(f)\|_{L^2(\mathcal{O})}\le \aleph \frac{|\mathcal{O}|^{1/2}}{|E|^{1/2}}\|\nabla f\|_{L^2(\mathcal{O})}.
\end{equation}
\end{lemma}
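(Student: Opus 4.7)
The plan is to reduce the statement to the standard Poincar\'e–Wirtinger inequality on $\mathcal{O}$ by splitting $f - M_E(f)$ into two pieces.

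First I would write
\[
f - M_E(f) = \bigl(f - M_\mathcal{O}(f)\bigr) + \bigl(M_\mathcal{O}(f) - M_E(f)\bigr),
\]
so that by the triangle inequality
\[
\|f - M_E(f)\|_{L^2(\mathcal{O})} \le \|f - M_\mathcal{O}(f)\|_{L^2(\mathcal{O})} + |\mathcal{O}|^{1/2}\,|M_\mathcal{O}(f) - M_E(f)|.
\]
The first term is controlled by the classical Poincar\'e–Wirtinger inequality on $\mathcal{O}$: there is a constant $C_\mathcal{O}$, depending only on $\mathcal{O}$, such that $\|f - M_\mathcal{O}(f)\|_{L^2(\mathcal{O})} \le C_\mathcal{O}\,\|\nabla f\|_{L^2(\mathcal{O})}$. (This is the point where some regularity of $\mathcal{O}$, e.g.\ Lipschitz or John, is implicitly used; if $\nabla f \notin L^2$ the inequality is vacuous.)

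Next I would bound the constant term by Cauchy–Schwarz. Since
\[
M_\mathcal{O}(f) - M_E(f) = \frac{1}{|E|}\int_E \bigl(M_\mathcal{O}(f) - f(x)\bigr)\,dx,
\]
applying Cauchy–Schwarz on $E$ gives
\[
|M_\mathcal{O}(f) - M_E(f)| \le \frac{1}{|E|^{1/2}}\,\|f - M_\mathcal{O}(f)\|_{L^2(E)} \le \frac{1}{|E|^{1/2}}\,\|f - M_\mathcal{O}(f)\|_{L^2(\mathcal{O})}.
\]
Inserting the Poincar\'e–Wirtinger bound for $\|f - M_\mathcal{O}(f)\|_{L^2(\mathcal{O})}$ yields
\[
|\mathcal{O}|^{1/2}\,|M_\mathcal{O}(f) - M_E(f)| \le C_\mathcal{O}\,\frac{|\mathcal{O}|^{1/2}}{|E|^{1/2}}\,\|\nabla f\|_{L^2(\mathcal{O})}.
\]

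Combining the two estimates and using $|E| \le |\mathcal{O}|$ to absorb the first (unweighted) term into the weighted one, I obtain
\[
\|f - M_E(f)\|_{L^2(\mathcal{O})} \le 2C_\mathcal{O}\,\frac{|\mathcal{O}|^{1/2}}{|E|^{1/2}}\,\|\nabla f\|_{L^2(\mathcal{O})},
\]
which is \eqref{1.16} with $\aleph = 2C_\mathcal{O}$. There is no real obstacle here; the only subtle point is the tacit regularity assumption on $\mathcal{O}$ needed for the classical Poincar\'e–Wirtinger inequality, but this is absorbed in the statement that $\aleph$ depends only on $\mathcal{O}$.
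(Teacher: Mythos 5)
Your proof is correct and follows essentially the same route as the paper's: decompose $f-M_E(f)$ via $f-M_{\mathcal{O}}(f)$, bound the averaging operator $M_E$ by Cauchy--Schwarz (which gives the factor $|\mathcal{O}|^{1/2}/|E|^{1/2}$), absorb the unweighted term using $|E|\le|\mathcal{O}|$, and finish with the classical Poincar\'e--Wirtinger inequality on $\mathcal{O}$. Your side remark about the implicit regularity of $\mathcal{O}$ needed for the classical inequality is also accurate; the paper's statement of the lemma for an ``arbitrary'' open bounded set is slightly loose on this point, but harmless since it is applied only to balls.
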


\begin{proof}
A simple application of Cauchy-Schwarz's inequality gives
\begin{equation}\label{pw1}
\|M_E(f)\|_{L^2(\mathcal{O})}\le \frac{|\mathcal{O}|^{1/2}}{|E|^{1/2}}\|f\|_{L^2(\mathcal{O})}.
\end{equation}
Inequality \eqref{pw1}, with $E=\mathcal{O}$ and $f$ substituted by $f-M_E(f)$, yields
\begin{equation}\label{pw2}
\|M_{\mathcal{O}}(f-M_E(f))\|_{L^2(\mathcal{O})}\le \|f-M_E(f)\|_{L^2(\mathcal{O})}.
\end{equation}
On the other hand, by the classical Poincar\'e-Wirtinger's inequality, there exists a constant $\aleph$, only depending on $\mathcal{O}$, so that
\begin{equation}\label{pw3}
\|f-M_{\mathcal{O}}(f)\|_{L^2(\mathcal{O})}\le (\aleph /2)\|\nabla f\|_{L^2(\mathcal{O})}.
\end{equation}
Now, as $M_E (M_{\mathcal{O}}(f))=M_{\mathcal{O}}(f)$, we have
\[
f-M_E(f)=f-M_{\mathcal{O}}(f)-M_E(f-M_{\mathcal{O}}(f)).
\]
We then obtain in light of \eqref{pw1}
\[
\|f-M_E(f)\|_{L^2(\mathcal{O})}\le \left(1+\frac{|\mathcal{O}|^{1/2}}{|E|^{1/2}}\right) \|f-M_{\mathcal{O}}(f)\|_{L^2(\mathcal{O})}
\]
implying 
\[
\|f-M_E(f)\|_{L^2(\mathcal{O})}\le 2\frac{|\mathcal{O}|^{1/2}}{|E|^{1/2}}\|f-M_{\mathcal{O}}(f)\|_{L^2(\mathcal{O})}.
\]
Therefore, in light of \eqref{pw3}, we get 
\[
\|f-M_E(f)\|_{L^2(\mathcal{O})}\le \aleph \frac{|\mathcal{O}|^{1/2}}{|E|^{1/2}}\|\nabla f\|_{L^2(\mathcal{O})}
\]
as expected.
\end{proof}

 \begin{theorem}\label{theorem1.3}
Let $0<k<\ell<m$. There exist $C>0$ and $0<\gamma <1$, only depending on  $D$, $k$, $\ell$, $m$, $\kappa$ and $\varkappa$,  so that, for any  $v\in H^1(D)$ satisfying $Lv\in L^2(D)$, $y\in D$ and $0<r< \mbox{dist}(y,\partial D)/m$.
\[
C\|\nabla v\|_{L^2(B(y,\ell r))}\le \left(\|\nabla v\|_{L^2(B(y,kr))}+\|Lv\|_{L^2(B(y,m r))}\right)^\gamma \|\nabla v\|_{L^2(B(y,m r))}^{1-\gamma}.
\]
\end{theorem}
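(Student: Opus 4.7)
\smallskip
\noindent
\textbf{Proof plan for Theorem \ref{theorem1.3}.} The idea is to reduce to the three-ball inequality already proved for $v$ itself (Theorem \ref{theorem1.2}), applied to $v-c$ for a well-chosen constant $c$. Since $L$ has no zero-order term, $L(v-c)=Lv$ and $\nabla(v-c)=\nabla v$, so constants are invisible to both $L$ and $\nabla$. The role of the Poincar\'e--Wirtinger lemma (Lemma \ref{lemmaGPW}) is to convert $L^2$-norms of $v-c$ into $L^2$-norms of $\nabla v$, and the role of the Caccioppoli estimate (Lemma \ref{lemma1.1}) is to extract a gradient on the middle ball starting from an $L^2$-bound on a slightly larger ball.

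Concretely, I would fix an auxiliary radius $\ell_1\in(\ell,m)$, say $\ell_1=(\ell+m)/2$, and set $c=M_{B(y,kr)}(v)$. The three main ingredients then read as follows. First, Caccioppoli applied to $v-c$ with inner radius $\ell r$ and outer radius $\ell_1 r$ gives
\[
C\|\nabla v\|_{L^2(B(y,\ell r))} \le r^{-1}\|v-c\|_{L^2(B(y,\ell_1 r))}+\|Lv\|_{L^2(B(y,\ell_1 r))}.
\]
Second, Theorem \ref{theorem1.2} applied to $v-c$ on the triple of radii $(k,\ell_1,m)$ yields
\[
C\|v-c\|_{L^2(B(y,\ell_1 r))} \le \bigl(\|v-c\|_{L^2(B(y,kr))}+\|Lv\|_{L^2(B(y,mr))}\bigr)^{\gamma}\|v-c\|_{L^2(B(y,mr))}^{1-\gamma}.
\]
Third, Lemma \ref{lemmaGPW} applied with $E=B(y,kr)$ and $\mathcal{O}=B(y,kr)$, respectively $\mathcal{O}=B(y,mr)$, together with the fact that the scalar Poincar\'e constant on a ball of radius $\rho$ is proportional to $\rho$, gives bounds of the form $\|v-c\|_{L^2(B(y,kr))}\le C_1 r\|\nabla v\|_{L^2(B(y,kr))}$ and $\|v-c\|_{L^2(B(y,mr))}\le C_2 r\|\nabla v\|_{L^2(B(y,mr))}$, where $C_1,C_2$ only involve $D,d,k,m$ (the factor $(m/k)^{d/2}$ being absorbed).

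Inserting the Poincar\'e bounds into the three-ball estimate and then combining with Caccioppoli, the $r^{-1}$ from Caccioppoli is exactly cancelled by the $r$-powers coming from Poincar\'e, leaving
\[
\|\nabla v\|_{L^2(B(y,\ell r))} \le C\bigl(\|\nabla v\|_{L^2(B(y,kr))}+\|Lv\|_{L^2(B(y,mr))}\bigr)^{\gamma}\|\nabla v\|_{L^2(B(y,mr))}^{1-\gamma}+C\|Lv\|_{L^2(B(y,mr))},
\]
after using that $r\le\mathrm{diam}(D)/m$ is bounded to absorb harmless $r$-factors into the constant. A short dichotomy now removes the additive $\|Lv\|$-term: if $\|Lv\|_{L^2(B(y,mr))}\le\|\nabla v\|_{L^2(B(y,mr))}$ then $\|Lv\|_{L^2(B(y,mr))}\le\|Lv\|_{L^2(B(y,mr))}^{\gamma}\|\nabla v\|_{L^2(B(y,mr))}^{1-\gamma}$, which fits inside the product; otherwise $\|\nabla v\|_{L^2(B(y,\ell r))}\le\|\nabla v\|_{L^2(B(y,mr))}\le\|Lv\|_{L^2(B(y,mr))}^{\gamma}\|\nabla v\|_{L^2(B(y,mr))}^{1-\gamma}$, which is dominated by the same product. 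This yields the desired three-ball inequality for $\nabla v$ with a possibly smaller $\gamma\in(0,1)$.

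The main technical point, and where I would be most careful, is the cancellation of $r$-powers in the combination step: it is sharp only if one uses the dimensionally correct form of the three-ball inequality for $v$ (in which $\|Lv\|_{L^2(B(y,mr))}$ naturally appears weighted by $r^2$, as its proof in fact produces after the rescaling $w(x)=v(rx+y)$, $L_rw(x)=r^2Lv(rx+y)$). This form is equivalent to the statement of Theorem \ref{theorem1.2} since $r\le\mathrm{diam}(D)/m$, but it is the version that makes the $r^{-1}$ of Caccioppoli harmlessly cancel the $r$ of Poincar\'e--Wirtinger. Everything else in the argument is routine.
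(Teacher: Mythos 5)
Your proof is correct, and it takes a genuinely different route from the paper's. The paper stays inside the Carleman machinery: it works throughout in the rescaled variable $w(x)=v(rx+y)$, inserts the generalized Poincar\'e--Wirtinger inequality (Lemma~\ref{lemmaGPW}) directly into the intermediate estimate \eqref{1.11} applied to $w-\varrho$ (with $\varrho$ the average over $B(1)$), and then proceeds exactly as in Theorem~\ref{theorem1.2}. Since \eqref{1.11} already bounds $\int_{B(2)}|\nabla w|^2$ from below, the paper needs no extra Caccioppoli step; Poincar\'e--Wirtinger is used only to replace the $(w-\varrho)^2$ terms on the right-hand side by $|\nabla w|^2$. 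Because everything happens at unit scale, there is no $r$-bookkeeping and no additive residual to remove. Your argument instead treats Theorem~\ref{theorem1.2} as a black box applied to $v-c$, then uses two applications of Poincar\'e--Wirtinger and one extra Caccioppoli on a buffer radius $\ell_1\in(\ell,m)$, followed by a dichotomy to absorb the leftover $\|Lv\|$ term. This is more modular and reusable, but you correctly pinpoint the one place where care is required: the statement of Theorem~\ref{theorem1.2} as written is not scale-sharp (it silently drops an $r^2$ weight in front of $\|Lv\|$, using that $r$ is bounded). If one uses the literal statement, the Caccioppoli factor $r^{-1}$ meets only $r^{1-\gamma}$ from the outer Poincar\'e factor, leaving an uncontrolled $r^{-\gamma}$ attached to $\|Lv\|$ inside the first bracket. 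Restoring the $r^2$ weight, as you propose, and tracking the rescaling $L_rw=r^2Lv(r\cdot+y)$ in the proof of Theorem~\ref{theorem1.2}, indeed yields the $r^2\|Lv\|_{L^2(B(y,mr))}$ form and makes the powers of $r$ cancel exactly. The final dichotomy to remove the additive $\|Lv\|$ term is also fine. So the proof goes through, at the cost of the $r$-scaling bookkeeping and the dichotomy, both of which the paper's internal, unit-scale argument avoids.
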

 
 \begin{proof}
 We keep the notations of the proof of Theorem \ref{theorem1.2}. We apply the generalized Poincar\'e-Wirtinger's inequality \eqref{1.16} in order to obtain, where
 \[
 \varrho =\frac{1}{|B(1)|}\int_{B(1)}w(x)dx,
 \]
 \begin{align}
 &\int_{B(1)}(w-\varrho)^2 dx\le C\int_{B(1)}|\nabla w|^2dx, \label{1.17}
 \\
 &\int_{B(3)}(w-\varrho)^2 dx\le C\int_{B(3)}|\nabla w|^2dx. \label{1.18}
 \end{align}
On the other hand \eqref{1.11}, in which $w$ is substituted by $w-\varrho$, gives
 \begin{align}
C\int_{B(2)} |\nabla w|^2 dx &\le e^{-\beta \tau}\int_{B(3)} \left ((w-\varrho)^2+|\nabla w|^2 \right)dx\label{1.19}
\\
&+e^{\alpha \tau}\left[\int_{B(1)} \left((w-\varrho)^2+|\nabla w|^2 \right) dx+\int_{B(3)}(L_rw)^2dx\right]. \nonumber 
\end{align}
Using \eqref{1.17} and \eqref{1.18} in \eqref{1.19}, we get
\[
C\int_{B(2)} |\nabla w|^2 dx \leq e^{\alpha \tau}\left[\int_{B(1)} |\nabla w|^2 dx+\int_{B(3)}(L_rw)^2dx\right]+e^{-\beta \tau}\int_{B(3)} |\nabla w|^2 dx.
\]
The rest of the proof is identical to that of Theorem \ref{theorem1.2}.
 \end{proof}

\subsection{Stability of the Cauchy problem}

We shall use in the sequel the following lemma.

\begin{lemma}\label{lemma1.1+}
Let $(\eta _k)$ be a sequence of real numbers satisfying $0<\eta _k\le 1$, $k\in \mathbb{N}$, and
\[
\eta _{k+1}\le c(\eta_k +b )^\gamma ,\quad k\in \mathbb{N},
\]
for some constants $0<\gamma <1$, $b>0$ and $c\ge 1$. Then
\begin{equation}\label{1.20}
\eta _k\le C(\eta_0+b)^{\gamma ^k},
\end{equation}
where $C=(2c)^{\frac{1}{1-\gamma}}$.
\end{lemma}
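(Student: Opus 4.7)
The plan is to prove the estimate by induction on $k$, after first disposing of a trivial regime. Note that $C = (2c)^{1/(1-\gamma)} \ge 2$ because $c \ge 1$ and $1/(1-\gamma) > 0$. If $\eta_0 + b \ge 1$ then, since $\gamma^k \in (0,1]$, one has $(\eta_0+b)^{\gamma^k} \ge 1$ and consequently $\eta_k \le 1 \le C(\eta_0+b)^{\gamma^k}$ for every $k$; nothing needs to be proved in that range.

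So I may assume $\eta_0 + b < 1$. The key observation in this regime is the monotonicity
\[
b \le \eta_0 + b \le (\eta_0+b)^{\gamma^k}, \qquad k \ge 0,
\]
which holds because raising a number in $(0,1)$ to a power in $(0,1]$ can only increase it. This is what will allow me to absorb the additive term $b$ in the recurrence into the principal term on the right-hand side.

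The induction itself is then straightforward. The case $k=0$ reduces to $\eta_0 \le C(\eta_0+b)$, which is immediate from $C \ge 1$. For the inductive step, set $X := (\eta_0+b)^{\gamma^k}$, so that the inductive hypothesis gives $\eta_k \le CX$ and the monotonicity above gives $b \le X$; plugging into the recursion,
\[
\eta_{k+1} \le c(\eta_k+b)^\gamma \le c(CX + X)^\gamma = c(C+1)^\gamma X^\gamma \le c(2C)^\gamma X^\gamma,
\]
using $C \ge 1$ at the last step. It then remains only to verify $c(2C)^\gamma \le C$, i.e.\ $C^{1-\gamma} \ge c\cdot 2^\gamma$; and this is precisely why the constant was chosen with $C^{1-\gamma} = 2c$: since $2^\gamma \le 2$, the required inequality holds. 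Hence $\eta_{k+1} \le CX^\gamma = C(\eta_0+b)^{\gamma^{k+1}}$, closing the induction.

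No step here is a genuine obstacle: the recursion $y \mapsto cy^\gamma$ is a standard contraction for $y \ge b$, and the value of $C$ is tuned exactly so that the inductive step closes. The only point that requires any thought is the case split on $\eta_0+b$, which is needed because without it one cannot compare $b$ against powers of $\eta_0+b$.
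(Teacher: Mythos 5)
Your proof is correct and takes essentially the same route as the paper's: split off the trivial case $\eta_0+b\ge 1$, then absorb the additive $b$ into the main term and check that the constant $C=(2c)^{1/(1-\gamma)}$ closes the estimate. The only difference is bookkeeping — the paper absorbs $b$ via $b<cb^\gamma\le c(\eta_k+b)^\gamma$, transforms to the recursion $\tau_{k+1}\le 2c\tau_k^\gamma$ with $\tau_k=\eta_k+b$, iterates, and bounds the geometric sum of exponents by $1/(1-\gamma)$; you instead absorb $b$ via $b\le(\eta_0+b)^{\gamma^k}$ and prove the target bound directly by induction, with the geometric-sum computation replaced by the single verification $c(2C)^\gamma\le C$. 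Both are sound and of comparable length.
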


\begin{proof}
If $\eta_0 +b\ge 1$,  \eqref{1.20} is trivially satisfied. Assume then that $\eta _0+b< 1$. As 
\[
b< cb^\gamma < c(\eta _k +b)^\gamma ,\quad  k\in \mathbb{N},
\]
we obtain
\begin{equation}\label{1.21}
\eta_{k+1}+b\le 2c(\eta _k +b)^\gamma.
\end{equation}
If $\tau_k=\eta _k +b$, \eqref{1.21} can rewritten as follows
\[
\tau_{k+1} \le 2c \tau_k^\gamma ,\quad k\in \mathbb{N}.
\]
A simple induction  in $k$ yields
\[
\eta _k\le (2c)^{1+\gamma +\ldots +\gamma ^{k-1}}\tau_0^{\gamma ^k}\le (2c)^{\frac{1}{1-\gamma}}(\eta_0+b)^{\gamma ^k}.
\]
The proof is then complete.
\end{proof}

Note that, as $D$ is Lipschitz, it has the uniform cone property (we refer for instance to \cite{HP} for more details and a proof). In particular, there exist $R>0$ and $\theta \in ]0,\pi /2[$ so that, to any $\tilde{x}\in \partial D$ we find $\xi =\xi (\tilde{x})\in \mathbb{S}^{d-1}$ with the property that
\[
\mathcal{C}(\tilde{x})=\left\{x\in \mathbb{R}^d;\; 0<|x-\tilde{x}|<R,\; (x-\tilde{x})\cdot \xi >|x-\tilde{x}|\cos \theta \right\}\subset D .
\]

\begin{proposition}\label{proposition1.1}
Let $0<\alpha \le 1$. There exist three constants $C>0$, $c>0$, $\beta >0$ and $\omega \Subset D$, only depending on $D$, $\kappa$ and $\varkappa$, so that:

\smallskip
\noindent
(1) for any $u\in H^1(D)\cap C^{0,\alpha}(\overline{D})$ with $Lu\in L^2(D)$ and $0<\epsilon<1$, we have
\[
C\|u\|_{L^\infty (\partial D)} \le e^{c/\epsilon}\left(\|u\|_{L^2(\omega )}+ \|Lu\|_{L^2(D)}\right)+\epsilon ^{\beta}\left([u]_\alpha +\|u\|_{L^2(D)}\right), \
\]

\noindent
(2) for any $u\in C^{1,\alpha}(\overline{D})$ with $Lu\in L^2(D)$ and $0<\epsilon<1$, we have
\[
C\|\nabla u\|_{L^\infty (\partial D)} \le e^{c/\epsilon}\left(\|\nabla u\|_{L^2(\omega )}+ \|Lu\|_{L^2(D)}\right)+\epsilon ^{\beta}\left([\nabla u]_\alpha +\|u\|_{L^2(D)}\right).
\]
Here $[\nabla u]_\alpha=\left(\sum_{i=1}^d[\partial_i u]_\alpha ^2\right)^{1/2}$.
\end{proposition}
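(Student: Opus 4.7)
I will prove part~(1); part~(2) follows the same lines with Theorem~\ref{theorem1.3} replacing Theorem~\ref{theorem1.2} and $[\nabla u]_\alpha$ replacing $[u]_\alpha$.

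Fix $\tilde x\in\partial D$. Since $D$ is Lipschitz it satisfies a uniform cone condition, so there is a cone $\mathcal{C}(\tilde x)\subset D$ of fixed aperture and height independent of $\tilde x$. Choose once and for all a subdomain $\omega\Subset D$ and a point $y_0\in\omega$ with $B(y_0,\rho_0)\subset\omega$ for some $\rho_0>0$. For a small parameter $r>0$, to be fixed at the very end, pick $y_N\in\mathcal{C}(\tilde x)$ with $|y_N-\tilde x|$ of order $r$ and $B(y_N,mr)\subset D$, and use the cone geometry to interpolate $y_0$ and $y_N$ by a chain $y_0,y_1,\ldots,y_N\in D$ satisfying $B(y_k,mr)\subset D$, $B(y_k,kr)\subset B(y_{k-1},\ell r)$ and $B(y_0,kr)\subset\omega$, where $k<\ell<m$ are the radii of Theorem~\ref{theorem1.2}. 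The uniformity of the cone parameters allows the chain length to be taken $N\le C/r$, with $C$ independent of $\tilde x$.

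Setting $M=\|u\|_{L^2(D)}$, $b=\|Lu\|_{L^2(D)}/M$ and $\eta_k=\|u\|_{L^2(B(y_k,\ell r))}/M$, Theorem~\ref{theorem1.2} yields at each step $\eta_k\le c(\eta_{k-1}+b)^\gamma$ with $\gamma\in(0,1)$ depending only on $D$, $\kappa$ and $\varkappa$. Since $\eta_0\le\|u\|_{L^2(\omega)}/M$, Lemma~\ref{lemma1.1+} then gives
\[
\|u\|_{L^2(B(y_N,\ell r))}\le C\bigl(\|u\|_{L^2(\omega)}+\|Lu\|_{L^2(D)}\bigr)^{\gamma^N}\|u\|_{L^2(D)}^{1-\gamma^N}.
\]
The H\"older continuity of $u$ and the Cauchy--Schwarz bound for the average over $B(y_N,\ell r)\cap\overline D$ (which has volume at least $cr^d$) then give
\[
|u(\tilde x)|\le Cr^\alpha[u]_\alpha+Cr^{-d/2}\|u\|_{L^2(B(y_N,\ell r))}.
\]

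To finish, I invoke the elementary interpolation $X^\mu Y^{1-\mu}\le sX+s^{-\mu/(1-\mu)}Y$ with $\mu=\gamma^N$ and a parameter $s>0$ still free, applied to $X=\|u\|_{L^2(\omega)}+\|Lu\|_{L^2(D)}$, $Y=M$. Choosing $r$ as an appropriate small positive power of $\epsilon$ and $s$ as an appropriate function of $\epsilon$, so that simultaneously $Cr^\alpha\le\epsilon^\beta$, $Cr^{-d/2}s^{-\gamma^N/(1-\gamma^N)}\le\epsilon^\beta$ and $Cr^{-d/2}s\le e^{c/\epsilon}$, collecting terms and taking the supremum over $\tilde x\in\partial D$ yields~(1). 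For~(2) the only modification is to run the three-ball iteration of Theorem~\ref{theorem1.3} on $\nabla u$ componentwise and to use the H\"older seminorm $[\nabla u]_\alpha$ in the final averaging step.

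\textbf{Main obstacle.} The delicate step is the final parameter balance: because $\gamma^N\sim e^{-c/r}$ is exponentially small in $1/r$, the factor $s^{-\gamma^N/(1-\gamma^N)}$ depends very sharply on $s$, and a careful logarithmic balance (tuning both $r$ and $s$ in $\epsilon$) is what produces the exponential prefactor $e^{c/\epsilon}$ for the Cauchy data while keeping the H\"older loss at $\epsilon^\beta$. A second, less deep but essential point is the uniformity of the chain construction in $\tilde x$, which relies on the uniform cone condition and ensures that $\omega$, $C$, $c$ and $\beta$ depend only on $D$, $\kappa$ and $\varkappa$.
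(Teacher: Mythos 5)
Your chain construction has a genuine gap in the parameter balance, and it traces back to the geometry of the chain.

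You take a chain of balls all of fixed radius $\sim r$ going from a fixed interior set $\omega$ to a ball at distance $\sim r$ from $\tilde x$; as you note this forces $N\sim 1/r$ links. Consequently $\gamma^N\sim e^{-c'/r}$, so $\gamma^{-N}$ is \emph{exponentially} large in $1/r$. Now your three target inequalities cannot be met simultaneously. To make the H\"older contribution $Cr^\alpha\lesssim\epsilon^\beta$ you must take $r$ a positive power of $\epsilon$. But then, from the Young/interpolation step, the constraint $Cr^{-d/2}s^{-\gamma^N/(1-\gamma^N)}\le\epsilon^\beta$ forces
\[
\ln s \;\gtrsim\; \gamma^{-N}\left(\tfrac{d}{2}\ln\tfrac1r+\beta\ln\tfrac1\epsilon\right)\;\sim\; e^{c'\epsilon^{-\beta/\alpha}}\,\ln\tfrac1\epsilon ,
\]
which is incompatible with $Cr^{-d/2}s\le e^{c/\epsilon}$: the left-hand side demands $s$ be \emph{doubly} exponential in a power of $1/\epsilon$, while the right-hand side allows only a single exponential. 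Conversely, if you instead take $r\sim 1/\ln(1/\epsilon)$ so that $N\sim\ln(1/\epsilon)$ and $\gamma^{-N}$ is merely polynomial in $1/\epsilon$, then $r^\alpha\sim(\ln(1/\epsilon))^{-\alpha}$, which is far slower than any power $\epsilon^\beta$; you would end up with a weaker, log-type modulus rather than the claimed $\epsilon^\beta$.

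The paper's construction avoids this by letting the ball radii \emph{shrink geometrically} along the cone: $\rho_{k+1}=\mu\rho_k$ with $\mu\in(0,1)$, centers $x_k$ converging to $\tilde x$ with $|\tilde x-x_k|\sim\mu^k$, and the inclusions $B(x_{k+1},\rho_{k+1})\subset B(x_k,2\rho_k)\subset B(x_k,3\rho_k)\subset\mathcal{C}(\tilde x)$. Then the number of links needed to reach distance $\sim\mu^k$ from $\tilde x$ is $k$ itself, i.e.\ \emph{logarithmic} in the target radius, and $\gamma^{-k}$ is only polynomial in $1/\mu^k$. Choosing $k\approx t$ with $e^{ct}=1/\epsilon$ then gives simultaneously $\mu^{\alpha k}\sim\epsilon^\beta$ (H\"older loss) and $\mu^{-e^{ct}}=e^{c'/\epsilon}$ (Cauchy prefactor). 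In short, the essential ingredient you are missing is that the ball radii must shrink geometrically toward $\tilde x$; a chain of balls of constant radius $r$ is an order of magnitude too long. A secondary difference — the paper builds $\omega$ as $\bigcup_{\tilde x}B(x_0(\tilde x),\rho_0)$ rather than fixing $\omega$ in advance — is cosmetic, but the choice of radii is not.
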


\begin{proof}
Fix $\tilde{x}\in \partial D$ and let $\xi =\xi (\tilde{x})$ be as in the above consequence of the uniform cone property. Let $x_0=\widetilde{x}+(R/2)\xi$, $\delta_0=|x_0-\tilde{x}|$ and $\rho_0=(\delta_0/3) \sin \theta$. It is worth noting that $B(x_0,3\rho_0)\subset \mathcal{C}(\tilde{x})$. 

\smallskip
Define the sequence of balls $(B(x_k, 3\rho _k))$ as follows
\begin{eqnarray*}
\left\{
\begin{array}{ll}
x_{k+1}=x_k-\alpha _k \xi ,
\\
\rho_{k+1}=\mu \rho_k ,
\\
\delta_{k+1}=\mu \delta_k,
\end{array}
\right.
\end{eqnarray*}
where
\[
\delta_k=|x_k-\tilde{x}|,\;\; \rho _k=\varpi\delta_k,\;\; \alpha _k=(1-\mu)\delta_k ,
\]
with
\[
\varpi =\frac{\sin \theta}{3},\;\; \mu =\frac{3-2\sin \theta}{3-\sin \theta}.
\]
This definition guarantees that, for each $k$, $B(x_k,3\rho _k)\subset \mathcal{C}(\tilde{x})$ and
\begin{equation}\label{1.22}
B(x_{k+1},\rho _{k+1})\subset B(x_k,2\rho _k).
\end{equation}
Let $0\ne u\in H^1(D)\cap C^{0,\alpha}(\overline{D})$ with $Lu\in L^2(D)$. From Theorem \ref{theorem1.2}, we have
\[
\|u\|_{L^2(B(x_k,2\rho _k))}\le C\|u\|_{L^2(B(x_k,3\rho _k))}^{1-\gamma }\left(\|u\|_{L^2(B(x_k,\rho _k))}+\|Lu\|_{L^2(B(x_k,3\rho _k))}\right)^\gamma
\]
and then
\[
\|u\|_{L^2(B(x_k,2\rho _k))}\le C\|u\|_{L^2(D)}^{1-\gamma }\left(\|u\|_{L^2(B(x_k,\rho _k))}+\|Lu\|_{L^2(D)}\right)^\gamma .
\]
But $B(x_{k+1},\rho _{k+1})\subset B(x_k,2\rho _k)$. Hence
\[
\|u\|_{L^2(B(x_{k+1},\rho _{k+1}))}\le C\|u\|_{L^2(D)}^{1-\gamma }\left(\|u\|_{L^2(B(x_k,\rho _k))}+\|Lu\|_{L^2(D)}\right)^\gamma .
\]
Or equivalently
\[
\frac{\|u\|_{L^2(B(x_{k+1},\rho _{k+1}))}}{\|u\|_{L^2(D)}}\le C \left( \frac{\|u\|_{L^2(B(x_k,\rho _k))}}{\|u\|_{L^2(D)}}+\frac{\|Lu\|_{L^2(D)}}{\|u\|_{L^2(D)}}\right)^\gamma .
\]
Substituting if necessary $C$ by $\max(C,1)$, we may assume that $C\ge 1$. We can then apply Lemma \ref{lemma1.1+} in order to get
\[
\frac{\|u\|_{L^2(B(x_k,\rho _k))}}{\|u\|_{L^2(D)}}\le C \left( \frac{\|u\|_{L^2(B(x_0,\rho _0))}}{\|u\|_{L^2(D)}}+\frac{\|Lu\|_{L^2(D)}}{\|u\|_{L^2(D)}}\right)^{\gamma ^k}.
\]
This inequality can be rewritten in the following form
\begin{equation}\label{1.23}
\|u\|_{L^2(B(x_k,\rho _k))}\le C\left( \|u\|_{L^2(B(x_0,\rho _0))}+\|Lu\|_{L^2(D)}\right)^{\gamma ^k}\|u\|_{L^2(D)}^{1-\gamma^k}.
\end{equation}
Applying Young's inequality, we obtain, for $\epsilon >0$,
\begin{equation}\label{1.24}
C\|u\|_{L^2(B(x_k,\rho _k))}\le \epsilon^{-\frac{1}{\gamma^k}} \left(\|u\|_{L^2(B(x_0,\rho _0))}+\|Lu\|_{L^2(D)}\right)+\epsilon^{\frac{1}{1-\gamma ^k}}\|u\|_{L^2(D)}.
\end{equation}

Now, using that $u$ is H\"older continuous, we get
\[
|u(\tilde{x})|\le [u]_\alpha |\tilde{x}-x|^\alpha +|u(x)|,\;\; x\in B(x_k,\rho _k).
\]
Whence
\[
|\mathbb{S}^{d-1}|\rho_k^d|u(\tilde{x})|^2\le 2[u]_\alpha^2\int_{B(x_k,\rho _k)} |\tilde{x}-x|^{2\alpha}dx+ 2\int_{B(x_k,\rho _k)} |u(x)|^2dx,
\]
or equivalently 
\[
|u(\tilde{x})|^2\le 2|\mathbb{S}^{d-1}|^{-1}\rho_k^{-d}\left([u]_\alpha^2\int_{B(x_k,\rho _k)} |\tilde{x}-x|^{2\alpha}dx+ \int_{B(x_k,\rho _k)} |u(x)|^2dx\right).
\]
As $\delta_k=\mu ^k\delta_0$, we have 
\[
|\tilde{x}-x|\leq |\tilde{x}-x_k|+|x_k-x|\le \delta_k+\rho_k=(1+\varpi )\delta_k=(1+\varpi )\mu ^k\delta _0.
\]
Therefore
\begin{align}
|u(\tilde{x})|^2 &\le 2[u]_\alpha^2(1+\varpi )^\alpha \delta_0^\alpha\mu ^{2\alpha k} \label{1.25}
\\
&\hskip 2cm  +2|\mathbb{S}^{d-1}|^{-1}(\varpi \delta_0)^{-d}\mu ^{-dk}\|u\|_{L^2(B(x_k,\rho _k))}^2. \nonumber
\end{align}
Let
\[
\omega =\bigcup_{\tilde{x}\in \partial D}B(x_0(\tilde{x}),\rho _0)
\]
and introduce the following temporary notations
\begin{align*}
&M=[u]_\alpha +\|u\|_{L^2(D)},
\\
&N=\|u\|_{L^2(\omega )}+ \|Lu\|_{L^2(D)}.
\end{align*}
Then \eqref{1.25} yields
\begin{equation}\label{1.26}
C|u(\tilde{x})|\le M\mu ^{\alpha k}+\mu ^{-\frac{dk}{2}}\|u\|_{L^2(B(x_k,\rho _k))}.
\end{equation}
A combination of \eqref{1.24} and \eqref{1.26} implies
\[
C\|u\|_{L^\infty (\partial D)} \le \mu ^{-\frac{dk}{2}}\epsilon^{-\frac{1}{\gamma^k}}N+ \left(\mu ^{\alpha k}+\mu ^{-\frac{dk}{2}}\epsilon^{\frac{1}{1-\gamma ^k}}\right)M,\quad\epsilon >0.
\]
We take in this inequality  $\epsilon >0$ in such a way that $\mu ^{\alpha k}=\mu ^{-\frac{dk}{2}}\epsilon^{\frac{1}{1-\gamma ^k}}$. That is, $\epsilon =\mu^{\left( \frac{d}{2}+\alpha \right)k(1-\gamma ^k)}$. We obtain
\[
C\|u\|_{L^\infty (\partial D)} \le \mu^{\alpha k-\frac{k}{\gamma ^k}\left(\frac{d}{2}+\alpha\right)}N+\mu ^{\alpha k}M.
\]
For $t>0$, let $k$ be the integer so that $k\le t<k+1$. Bearing in mind that $0< \mu ,\gamma <1$, we deduce, by straightforward computations, from the preceding inequality
\[
C\|u\|_{L^\infty (\partial D)} \le \mu^{-e^{ct}}N+\mu ^{\alpha t}M.
\]
We end up getting, by taking $e^{ct}=1/\epsilon$,
\[
C\|u\|_{L^\infty (\partial D)} \le e^{c/\epsilon}N+\epsilon ^{\beta}M, \quad 0<\epsilon<1,
\]
which is the expected inequality in (1).

We omit the proof of (2) which is quite similar to that of (1). The only difference is that we have to apply Theorem \ref{theorem1.3} instead of Theorem \ref{theorem1.2}.
\end{proof}

\begin{proposition}\label{proposition1.2}
Let $\omega\Subset D$ and $\tilde{\omega}\Subset D$ be nonempty. There exist $C>0$ and $\beta >0$, only depending on $D$, $\kappa$, $\varkappa$, $\omega$ and $\tilde{\omega}$, so that, for any  $u\in H^1(D)$ satisfying $Lu\in L^2(D)$ and $\epsilon >0$, we have
\begin{align}
&C\|u\|_{L^2(\tilde{\omega})}\leq  \epsilon ^\beta \|u\|_{L^2(D )}+\epsilon^{-1}\left( \|u\|_{L^2(\omega )}+\|Lu\|_{L^2(D)}\right),\label{1.27}
\\
&C\|\nabla u\|_{L^2(\tilde{\omega})}\leq  \epsilon ^\beta \|\nabla u\|_{L^2(D)}+\epsilon^{-1}\left(\|u\|_{L^2(\omega )}+\|Lu\|_{L^2(D)}\right).\label{1.28}
\end{align}
\end{proposition}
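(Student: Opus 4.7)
The strategy is the classical propagation of smallness along a chain of balls, using the three-ball inequalities (Theorems \ref{theorem1.2} and \ref{theorem1.3}) as the engine and Lemma \ref{lemma1.1+} as the iteration device. My plan is to first establish the interpolation inequality
$$\|u\|_{L^2(\tilde{\omega})}\le C\bigl(\|u\|_{L^2(\omega)}+\|Lu\|_{L^2(D)}\bigr)^\theta\|u\|_{L^2(D)}^{1-\theta}$$
for some $\theta=\theta(D,\kappa,\varkappa,\omega,\tilde{\omega})\in(0,1)$, and then convert it into \eqref{1.27} by a weighted Young's inequality, obtaining \eqref{1.27} with $\beta=\theta/(1-\theta)$.

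To prove the interpolation bound, I fix once and for all ratios $k<\ell<m$ (for instance the $3/2,2,7/2$ used in the proof of Theorem \ref{theorem1.2}) and a small ball $B(x_0,\rho_0)\Subset\omega$. Because $D$ is a bounded domain (hence connected) and $\tilde{\omega}\Subset D$ is compact, a standard tubular-neighbourhood/compactness argument produces integers $N$ and a radius $\rho>0$, depending only on $D,\omega,\tilde{\omega}$, such that for every $y\in\tilde{\omega}$ there exist points $x_0,x_1,\ldots,x_N=y$ in $D$ with $B(x_i,m\rho)\subset D$ and $B(x_{i+1},k\rho)\subset B(x_i,\ell\rho)$ for $i=0,\ldots,N-1$. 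Theorem \ref{theorem1.2} applied at each $x_i$ gives
$$C\|u\|_{L^2(B(x_{i+1},k\rho))}\le C\|u\|_{L^2(B(x_i,\ell\rho))}\le \bigl(\|u\|_{L^2(B(x_i,k\rho))}+\|Lu\|_{L^2(D)}\bigr)^\gamma\|u\|_{L^2(D)}^{1-\gamma}.$$
Assuming $u\not\equiv 0$ (otherwise the result is trivial) and setting $\eta_i=\|u\|_{L^2(B(x_i,k\rho))}/\|u\|_{L^2(D)}$, $b=\|Lu\|_{L^2(D)}/\|u\|_{L^2(D)}$, Lemma \ref{lemma1.1+} yields $\eta_N\le C(\eta_0+b)^{\gamma^N}$; arranging $k\rho\le\rho_0$ so that $B(x_0,k\rho)\subset\omega$, this reads
$$\|u\|_{L^2(B(y,k\rho))}\le C\bigl(\|u\|_{L^2(\omega)}+\|Lu\|_{L^2(D)}\bigr)^{\gamma^N}\|u\|_{L^2(D)}^{1-\gamma^N}.$$
Covering $\tilde{\omega}$ by finitely many such balls (with the same $\rho$ and $N$) and summing gives the desired interpolation inequality with $\theta=\gamma^N$.

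For the gradient estimate \eqref{1.28}, the identical chain argument using Theorem \ref{theorem1.3} in place of Theorem \ref{theorem1.2} first produces
$$\|\nabla u\|_{L^2(\tilde{\omega})}\le C\bigl(\|\nabla u\|_{L^2(B(x_0,k\rho))}+\|Lu\|_{L^2(D)}\bigr)^{\theta'}\|\nabla u\|_{L^2(D)}^{1-\theta'}.$$
To replace $\|\nabla u\|_{L^2(B(x_0,k\rho))}$ by the $L^2$ norm of $u$ on $\omega$, I arrange $B(x_0,k\rho)\Subset\omega$ and invoke the Caccioppoli inequality \eqref{1.4}, which bounds $\|\nabla u\|_{L^2(B(x_0,k\rho))}$ by a multiple of $\|u\|_{L^2(\omega)}+\|Lu\|_{L^2(D)}$. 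Young's inequality then yields \eqref{1.28}.

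The only nontrivial ingredient is the construction of the chain of balls with uniform length $N$ and uniform radius $\rho$, valid simultaneously for all $y\in\tilde{\omega}$; this is where the connectedness of $D$ together with the compactness $\tilde{\omega}\Subset D$ enter essentially. Everything else---the iterated three-ball inequality, the use of Lemma \ref{lemma1.1+}, Caccioppoli, and the final Young step---is mechanical once the chain is fixed.
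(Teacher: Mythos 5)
Your proof is correct and follows essentially the same approach as the paper: propagation of smallness along a finite chain of balls via the three-ball inequalities of Theorems~\ref{theorem1.2} and~\ref{theorem1.3}, iteration using Lemma~\ref{lemma1.1+}, a compactness covering of $\tilde{\omega}$, and a final Young's inequality. Your explicit use of Caccioppoli's inequality~\eqref{1.4} to replace $\|\nabla u\|_{L^2(B(x_0,k\rho))}$ by $\|u\|_{L^2(\omega)}+\|Lu\|_{L^2(D)}$ in the gradient case correctly supplies a detail that the paper leaves implicit when it states the proof of~\eqref{1.28} is ``similar.''
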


\begin{proof}
Since the proof of \eqref{1.27} and \eqref{1.28} are similar, we limit ourselves to the proof of \eqref{1.27}.

Fix $x_0\in \omega$ and $x\in \tilde{\omega}$. Then there exists (see for instance \cite[page 29]{Ch2016}) a sequence of balls $B(x_j,r)$, $r>0$, $j=0,\ldots ,N$, so that
\begin{eqnarray*}
\left\{
\begin{array}{ll}
B(x_0, r)\subset \omega ,
\\
B(x_{j+1},r)\subset B(x_j, 2r),\; j=0,\ldots ,N-1,
\\
x\in B(x_N,r),
\\
B(x_j,3r)\subset \Omega ,\; j=0,\ldots ,N.
\end{array}
\right.
\end{eqnarray*} 
We get then by applying Theorem \ref{theorem1.2}  
\[
\|u\|_{L^2(B(x_j,2r))}\leq C\|u\|_{L^2(B(x_j,3r))}^{1-\gamma}\left(\|u\|_{L^2(B(x_j,r))}+\|Lu\|_{L^2(D)}\right)^\gamma ,\;\; 1\leq j\leq N,
\]
the constants $C>0$ and $0<\gamma <1$ only depend on $D$, $\kappa$ and $\varkappa$.
\\
We proceed as in the proof of Proposition \ref{proposition1.1} in order to obtain
\[
\|u\|_{L^2(B(x_N,2r))}\le C\|u\|_{L^2(B(x_j,3r))}^{1-\gamma^N}\left(\|u\|_{L^2(B(x_0,r))}+\|Lu\|_{L^2(D)}\right)^{\gamma^N} .
\]
Combined with Young's inequality, this estimate yields

\[
C\|u\|_{L^2(B(x_N,2r))}\le \epsilon ^\beta\|u\|_{L^2(D)}+\epsilon^{-1}\left(\|u\|_{L^2(\omega )}+\|Lu\|_{L^2(D)}\right),
\]
where $\beta={\frac{\gamma ^N}{1-\gamma^N}}$.

As $\tilde{\omega}$ is compact, it can be covered by a finite number of balls $B(x_N, r)$, that we denote by 
\[
B(x_N^1, r),\ldots , B(x_N^\ell, r).
\] 
Hence
\[
\|u\|_{L^2(\tilde{\omega})} \le \sum_{j=1}^\ell \|u\|_{L^2(B(x_N^j,r))}.
\]
Whence
\[
C\|u\|_{L^2(\tilde{\omega})}\le \epsilon ^\beta\|u\|_{L^2(D)}+\epsilon^{-1}\left(\|u\|_{L^2(\omega)}+\|Lu\|_{L^2(D)}\right).
\]
\end{proof}

\begin{proposition}\label{proposition1.3}
Let $\mathcal{C}$ be a nonempty open subset of $\partial D$. There exist two constants $C>0$ and $\gamma >0$ and $\omega _0\Subset D$, only depending on $D$, $\kappa$, $\varkappa$ and $\mathcal{C}$,  so that, for any  $u\in H^1(\Omega )$ satisfying $Lu\in L^2(D)$, we have
\[
C\|u\|_{H^1(\omega _0 )}\le \epsilon ^\gamma \|u\|_{H^1(D )} +\epsilon ^{-1}\left(\|u\|_{L^2(\mathcal{C})}+\|\nabla u\|_{L^2(\mathcal{C})}+\|Lu\|_{L^2(D)}\right),\quad \epsilon >0.
\]
\end{proposition}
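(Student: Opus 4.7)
The estimate is proved in two steps: a near-boundary seed estimate obtained via a local Carleman inequality, followed by interior propagation through Proposition~\ref{proposition1.2}. The seed estimate asserts that, for some open ball $\tilde B \Subset D$ and some $\gamma_0>0$ (depending only on $D$, $\kappa$, $\varkappa$ and $\mathcal C$), every admissible $u$ and every $\epsilon\in(0,1)$ satisfy
\begin{equation}\label{seed-plan}
C\|u\|_{L^2(\tilde B)} \le \epsilon^{\gamma_0}\|u\|_{H^1(D)} + \epsilon^{-1}\bigl(\|u\|_{L^2(\mathcal C)} + \|\nabla u\|_{L^2(\mathcal C)} + \|Lu\|_{L^2(D)}\bigr).
\end{equation}
Once \eqref{seed-plan} is granted, Proposition~\ref{proposition1.2} applied with $\omega=\tilde B$ and $\tilde\omega\supset\omega_0$ (a slightly larger open set, still compactly contained in $D$) controls $\|u\|_{L^2(\omega_0)}$ and $\|\nabla u\|_{L^2(\omega_0)}$ by $\|u\|_{L^2(\tilde B)}$, $\|Lu\|_{L^2(D)}$ and $\epsilon^\beta\|u\|_{H^1(D)}$; inserting \eqref{seed-plan} and rescaling $\epsilon\mapsto\epsilon^\theta$ to align the two interpolation exponents (in the spirit of Lemma~\ref{lemma1.1+}) yields the target inequality.

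\textbf{Seed estimate via local boundary Carleman.} Pick $x_0\in\mathcal C$. Because $\mathcal C$ is open in $\partial D$ and $D$ is Lipschitz, we can select a point $y_0\in\mathbb R^d\setminus\overline D$ at distance $\eta>0$ from $x_0$ together with a radius $R>\eta$ so that $\partial D\cap B(y_0,R)\subset\mathcal C$ and the interior cone at $x_0$ meets $B(y_0,R)$. Set $\psi(x):=-|x-y_0|^2$: since $y_0\notin\overline D$, $\psi$ has no critical points in $\overline D$, and $\varphi:=e^{\lambda\psi}$ decreases strictly with $|x-y_0|$. Choose a radial cutoff $\chi\in C_0^\infty(B(y_0,R_2))$ with $\chi\equiv 1$ on $B(y_0,R_1)$ for some $\eta<R_1<R_2<R$, and apply Theorem~\ref{theorem1.1} on $D$ to $v:=\chi u$. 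Since $L(\chi u)=\chi Lu+[L,\chi]u$ with $[L,\chi]u$ supported in $\{R_1\le|x-y_0|\le R_2\}$ and of size $\le C(|\nabla u|+|u|)$, and since the boundary integral in \eqref{1.3} is supported in $\mathrm{supp}\,\chi\cap\partial D\subset\mathcal C$, one obtains, after fixing $\lambda$ large and taking $\tau\ge\tau_0$,
\begin{align*}
Ce^{2\tau\varphi_0}\|u\|_{H^1(B^\ast)}^2
&\le (1+\tau^3)\,e^{2\tau\varphi_1}\bigl(\|u\|_{L^2(\mathcal C)}^2+\|\nabla u\|_{L^2(\mathcal C)}^2+\|Lu\|_{L^2(D)}^2\bigr) \\
&\quad + e^{2\tau\varphi_2}\|u\|_{H^1(D)}^2,
\end{align*}
where $B^\ast:=B(y_0,R_0)\cap D$ for some $\eta<R_0<R_1$, and $\varphi_0:=e^{-\lambda R_0^2}$, $\varphi_1:=e^{-\lambda\eta^2}$, $\varphi_2:=e^{-\lambda R_1^2}$ satisfy $\varphi_2<\varphi_0<\varphi_1$. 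Dividing by $e^{2\tau\varphi_0}$, absorbing the polynomial $\tau$-factor into the positive exponential gap, taking square roots, and picking $\tau$ so that $e^{-\tau(\varphi_0-\varphi_2)}=\epsilon^{\gamma_0}$ with $\gamma_0:=(\varphi_0-\varphi_2)/(\varphi_1-\varphi_0)$ yields \eqref{seed-plan}, where $\tilde B\Subset B^\ast\cap D$ is selected through the interior cone at $x_0$.

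\textbf{Main obstacle.} The crux is the seed estimate, specifically arranging the geometry so that the strict ordering $\varphi_2<\varphi_0<\varphi_1$ holds: the commutator annulus (where control of $u$ is lost, contributing $\|u\|_{H^1(D)}$) must lie strictly below the seed ball $B^\ast$ in the $\varphi$-ordering, while both lie strictly below the Cauchy segment $\mathcal C\cap B(y_0,R)$. The choice $\psi=-|x-y_0|^2$ with $y_0$ just outside $\overline D$ near a point of $\mathcal C$, foliated by concentric spheres, realizes all three conditions simultaneously. The only property of $\partial D$ used is the Lipschitz (interior cone) property, guaranteeing both the existence of a suitable $y_0$ outside $\overline D$ and the placement of $\tilde B$ compactly inside $D$ within the cone at $x_0$.
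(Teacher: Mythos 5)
Your proof follows the same essential mechanism as the paper's: a local Carleman estimate near $\mathcal{C}$ with a radial weight whose center lies just outside $\overline D$ near a point of $\mathcal{C}$, combined with a cutoff supported in a small ball about that center whose trace on $\partial D$ lies inside $\mathcal{C}$. The paper uses the weight $\psi(x)=\ln\frac{(\rho+r)^2}{|x-x_0|^2}$ on $D\cap B(x_0,\rho+r)$ while you use $\psi(x)=-|x-y_0|^2$; both are monotone in the distance to the exterior point, so the resulting $\varphi$-ordering $\varphi_2<\varphi_0<\varphi_1$ (commutator annulus below seed ball below Cauchy surface) works out the same way, and both rely on the interior-cone (Lipschitz) property to guarantee that the seed ball $B(y_0,R_0)\cap D$ contains a set compactly inside $D$.

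A few remarks worth flagging. First, your $\psi=-|x-y_0|^2$ is negative on $\overline D$, whereas Theorem~\ref{theorem1.1} as stated asks for $0\le\psi$; this is harmless — add a constant and reabsorb into $\tau$, or restrict to $D\cap B(y_0,R)$ with $\psi=R^2-|x-y_0|^2$ exactly as the paper implicitly does by localizing to $D\cap B(x_0,\rho+r)$. Second, your $\varphi_1:=e^{-\lambda\eta^2}$ need not dominate $\varphi$ on all of $\mathrm{supp}\,\chi\cap\overline D$ if $\mathrm{dist}(y_0,\overline D)<\eta$; this does not affect the conclusion (only enlarges the Cauchy-data coefficient), but the definition should be $\varphi_1:=\sup_{\mathrm{supp}\,\chi\cap\overline D}\varphi$. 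Third, your plan describes a two-step strategy (seed $L^2$-estimate plus interior propagation via Proposition~\ref{proposition1.2}), but the seed estimate you actually derive already controls $\|u\|_{H^1(B^\ast)}$ because the Carleman inequality controls the gradient term; the propagation step is therefore superfluous, and one can simply take $\omega_0=\tilde B$. The paper avoids this by keeping only the $L^2$ term from Carleman and then invoking the Caccioppoli inequality to recover the gradient on a slightly smaller set — a choice of presentation, not of substance. Both routes give the stated inequality.
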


\begin{proof} 
Let $\tilde{x}\in \mathcal{C}$. Bearing in mind that $D$ is locally on one side of its boundary, we find $x_0$ in the interior of $\mathbb{R}^n\setminus\overline{\Omega}$ sufficiently close to $\tilde{x}$ so that $\rho=\mbox{dist}(x_0,K)<R$, where $K=\overline{B(\tilde{x},R)}\cap \mathcal{C}$. Fix then $r>0$ in order to satisfy $B(x_0,\rho +r)\cap \partial D \subset \mathcal{C}$ and $B(x_0,\rho+\theta r)\cap D\ne \o$, for some $0<\theta <1$.

Define
\[
\psi (x)=\ln \frac{(\rho +r)^2}{|x-x_0|^2}. 
\]
Then 
\[
|\nabla \psi (x)|=\frac{2}{|x-x_0|}\geq \frac{2}{\rho},\quad x\in \overline{D\cap B(x_0,\rho +r)}.
\]

Pick  $\chi \in C_0^\infty (B(x_0,\rho +r))$ satisfying $\chi =1$ on $B(x_0,(1+\theta)r/2)$. Let $u\in H^1(D)$ satisfying $Lu\in L^2(D)$. As we have seen in the proof of Theorem \ref{theorem1.2},
\[
(L(\chi u)) ^2\le (Lu)^2 +(Qu)^2,
\]
with 
\[ 
(Qu)^2\le C\left(u^2+|\nabla u|^2\right)
\] 
and $\mbox{supp}(L(\chi u))\subset U:=B(x_0,\rho+r)\setminus B(x_0, (1+\theta)r/2)$.

We get by applying Theorem \ref{theorem1.1} to $v=\chi u$ in $D\cap B(x_0,\rho +r)$, where $\lambda \ge \lambda _0$ is  fixed and $\tau \geq \tau _0$, 

\begin{align*}
C\int_{B(x_0,\rho+\theta r)\cap D}e^{2\tau \varphi}u^2dx\leq  \int_{U\cap D}&e^{2\tau \varphi}(u^2+|\nabla u|^2)dx+\int_{B(x_0,\rho+r)\cap D} e^{2\tau \varphi}(Lu)^2dx
\\
&+\int_{B(x_0,\rho+r)\cap \partial D}e^{2\tau \varphi}(u^2+|\nabla u|^2)d\sigma .
\end{align*}
But
\[
\varphi (x)=e^{\lambda \ln \frac{(\rho +r)^2}{|x-x_0|^2}}=\frac{(\rho +r)^{2\lambda}}{|x-x_0|^{2\lambda}}.
\]
Whence
\begin{align}
&Ce^{2\tau \varphi_0}\int_{B(x_0,\rho+\theta r)\cap D}u^2dx\le e^{2\tau \varphi_0}\int_{U\cap D}(u^2+|\nabla u|^2)dx \label{1.29}
\\
&\qquad+e^{2\tau \varphi _2}\int_{B(x_0,\rho+r)}(Lu)^2dx +e^{2\tau \varphi _2}\int_{B(x_0,\rho+r)\cap \mathcal{C}}(u^2+|\nabla u|^2)d\sigma ,\nonumber
\end{align}
where
\[
\varphi _0=\frac{(\rho +r)^{2\lambda}}{(\rho +\theta r)^{2\lambda}},\quad \varphi _1=\frac{(\rho +r)^{2\lambda}}{(\rho +(1+\theta)r/2)^{2\lambda}},\quad \varphi _2=\frac{(\rho +r)^{2\lambda}}{\rho ^{2\lambda}}.
\]
Let
\[
\alpha= \frac{r(1-\theta)\lambda (\rho +r)^{2\lambda}}{(\rho +(1+\theta)r/2)^{2\lambda +1}}\quad\mbox{and}\quad \beta =\frac{2r\lambda \theta(\rho +r)^{2\lambda}}{\rho ^{2\lambda +1}}.
\]
Elementary computations show 
\[
\varphi _0-\varphi _1 \ge \alpha \quad \mbox{and}\quad \varphi _2-\varphi _0\leq \beta .
\]
These inequalities in  \eqref{1.29} yield
\begin{align}
&C\int_{B(x_0,r+\theta r)\cap D}u^2dx\leq e^{-\alpha \tau }\int_{U}(u^2+|\nabla u|^2)dx\label{1.30}
\\
&\qquad +e^{\beta \tau }\int_{B(x_0,\rho+r)\cap D}(Lu)^2dx +e^{\beta \tau }\int_{B(x_0,\rho+r)\cap \partial D}(u^2+|\nabla u|^2)d\sigma .\nonumber
\end{align}
Let $\omega_0\Subset \omega_1\Subset B(x_0,r+\theta r)\cap D$. We can then mimic the proof of Caccioppoli's inequality in Lemma \ref{lemma1.1} in order to obtain
\begin{equation}\label{1.31}
C\int_{\omega _0}|\nabla u|^2dx\le \int_{\omega _1}u^2dx+\int_{\omega_1}(Lu)^2dx.
\end{equation}
Hence \eqref{1.31} in \eqref{1.30} yields
\begin{align*}
C\int_{\omega_0}(u^2+|\nabla u|^2)dx\le e^{-\alpha \tau }&\int_D(u^2+|\nabla u|^2)dx
\\
& +e^{\beta \tau }\int_D(Lu)^2dx +e^{\beta \tau }\int_{\mathcal{C} }(u^2+|\nabla u|^2)d\sigma .
\end{align*}
Again, we complete the proof similarly to that of Theorem \ref{theorem1.2}.
\end{proof}

We shall need hereafter the following lemma.

\begin{lemma}\label{lemma1.3}
$($\cite{Ch2016}$)$ There exists a constant $C>0$, only depending on $D$ and $\kappa$, so that, for any $u\in H^1(D)$ with $Lu\in L^2(D)$, we have
\begin{equation}\label{1.32}
C\| u\|_{H^1(D)} \le  \|Lu\|_{L^2(D)}+ \|u\|_{H^{1/2}(\partial D)}.
\end{equation}
\end{lemma}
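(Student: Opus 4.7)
The plan is to reduce to a homogeneous Dirichlet problem by a trace lifting and then apply the usual energy method.

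First, by the surjectivity of the trace operator $H^1(D)\to H^{1/2}(\partial D)$ (valid because $D$ is Lipschitz), I would pick a continuous right inverse and set $v\in H^1(D)$ to be the lifting of $u_{|\partial D}$, which satisfies
\[
\|v\|_{H^1(D)}\le C_0\|u\|_{H^{1/2}(\partial D)},
\]
with $C_0$ depending only on $D$. Then $w:=u-v$ lies in $H^1_0(D)$ and obeys the Poincar\'e inequality $\|w\|_{L^2(D)}\le C_P\|\nabla w\|_{L^2(D)}$.

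Next, I would test the weak formulation of $Lu$ against $w\in H^1_0(D)$:
\[
\int_D A\nabla u\cdot \nabla w\, dx=-\int_D (Lu)\, w\, dx,
\]
equivalently
\[
\int_D A\nabla u\cdot \nabla u\, dx=\int_D A\nabla u\cdot \nabla v\, dx-\int_D (Lu)(u-v)\, dx.
\]
Using the lower bound $A\nabla u\cdot \nabla u\ge \kappa^{-1}|\nabla u|^2$, the upper bound $|A\nabla u|\le \kappa |\nabla u|$, Cauchy--Schwarz, and Poincar\'e applied to $w$, together with Young's inequality to absorb $\|\nabla u\|_{L^2(D)}$ into the left-hand side, I would obtain
\[
\|\nabla u\|_{L^2(D)}\le C_1\bigl(\|v\|_{H^1(D)}+\|Lu\|_{L^2(D)}\bigr),
\]
for a constant $C_1$ depending only on $D$ and $\kappa$ (through $\kappa$ and $C_P$). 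Finally, the $L^2$-bound follows from $\|u\|_{L^2(D)}\le \|v\|_{L^2(D)}+C_P\|\nabla w\|_{L^2(D)}\le \|v\|_{L^2(D)}+C_P(\|\nabla u\|_{L^2(D)}+\|\nabla v\|_{L^2(D)})$, and substituting the bound on $\|v\|_{H^1(D)}$ yields the claim.

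There is no serious obstacle here: the whole argument is the classical well-posedness estimate for the inhomogeneous Dirichlet problem. The only point requiring a little care is keeping track of the absorption constants so that the final constant $C$ depends only on $D$ and $\kappa$ (and not on $\varkappa$), which is why one tests with $w=u-v$ rather than integrating by parts twice and generating a term involving $\nabla A$.
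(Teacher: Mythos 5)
Your proof is correct, and it is the standard energy argument: lift the trace, reduce to $w=u-v\in H^1_0(D)$, use the distributional identity $\int_D A\nabla u\cdot\nabla w=-\int_D(Lu)\,w$, coercivity, Poincar\'e, and Young. The paper itself does not supply a proof of this lemma but simply cites \cite{Ch2016}, and your argument is exactly what that reference would give; the observation that testing against $w$ (rather than integrating by parts twice) keeps $\varkappa$ out of the constant is a correct and worthwhile remark.
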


\begin{theorem}\label{theorem1.4}
Let $\mathcal{C}$ be a nonempty open subset of $\partial D$ and let $0<\alpha \le 1$. There exist $C>0$, $c >0$ and $\beta >0$, only depending on $D$, $\kappa$, $\varkappa$, $\alpha$ and $\mathcal{C}$, so that, for any $u\in C^{1,\alpha}(\overline{D} )$ satisfying $Lu\in L^2(D)$ and $0<\epsilon <1$,  we have
\begin{equation}\label{1.33}
C\|u\|_{H^1(D)}\le \epsilon ^\beta \|u\|_{C^{1,\alpha}(\overline{D} )}+e^{c/\epsilon}\left(\|u\|_{L^2(\mathcal{C})}+\|\nabla u\|_{L^2(\mathcal{C})}+\|Lu\|_{L^2(D)}\right).
\end{equation}
\end{theorem}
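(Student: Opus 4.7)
The plan is to chain together Propositions \ref{proposition1.1}--\ref{proposition1.3} and Lemma \ref{lemma1.3} as follows: push the Cauchy information from $\mathcal{C}$ into the interior, propagate it to a neighborhood of $\partial D$, convert $L^2$ information there into pointwise control on $\partial D$, interpolate against the $C^{1,\alpha}$ norm to obtain an $H^{1/2}(\partial D)$ estimate, and finally close using the global $H^1$ estimate of Lemma \ref{lemma1.3}.

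Concretely, write $\mathcal{A}=\|u\|_{L^2(\mathcal{C})}+\|\nabla u\|_{L^2(\mathcal{C})}+\|Lu\|_{L^2(D)}$. First I would apply Proposition \ref{proposition1.3} with a small parameter $\eta_0>0$ to obtain
\[
C\|u\|_{H^1(\omega_0)}\le \eta_0^\gamma\|u\|_{H^1(D)}+\eta_0^{-1}\mathcal{A},
\]
for some $\omega_0\Subset D$. Next, letting $\omega\Subset D$ be the open set produced by Proposition \ref{proposition1.1}, I would apply both inequalities of Proposition \ref{proposition1.2} (taking the source to be a neighborhood of $\omega_0$ and the target $\omega$) with small parameter $\eta_1>0$ to deduce
\[
C(\|u\|_{L^2(\omega)}+\|\nabla u\|_{L^2(\omega)})\le \eta_1^{\beta}\|u\|_{H^1(D)}+\eta_1^{-1}(\|u\|_{H^1(\omega_0)}+\|Lu\|_{L^2(D)}).
\]
Substituting the previous bound,
\[
C(\|u\|_{L^2(\omega)}+\|\nabla u\|_{L^2(\omega)})\le (\eta_1^{\beta}+\eta_1^{-1}\eta_0^\gamma)\|u\|_{H^1(D)}+\eta_1^{-1}\eta_0^{-1}\mathcal{A}.
\]
Now I would apply Proposition \ref{proposition1.1}(1)--(2) with parameter $\epsilon\in(0,1)$; since $[u]_\alpha+[\nabla u]_\alpha+\|u\|_{L^2(D)}\le C\|u\|_{C^{1,\alpha}(\overline D)}$, I get
\[
C(\|u\|_{L^\infty(\partial D)}+\|\nabla u\|_{L^\infty(\partial D)})\le e^{c/\epsilon}(\|u\|_{L^2(\omega)}+\|\nabla u\|_{L^2(\omega)}+\|Lu\|_{L^2(D)})+\epsilon^{\beta}\|u\|_{C^{1,\alpha}(\overline D)}.
\]
Choosing $\eta_1=e^{-2c/(\beta\epsilon)}$ and $\eta_0^\gamma=\eta_1^{\beta+1}$ makes $e^{c/\epsilon}(\eta_1^{\beta}+\eta_1^{-1}\eta_0^\gamma)\le e^{-c/\epsilon}$, so after substitution we obtain, for a possibly larger $c'>0$,
\[
C(\|u\|_{L^\infty(\partial D)}+\|\nabla u\|_{L^\infty(\partial D)})\le \epsilon^{\beta}\|u\|_{C^{1,\alpha}(\overline D)}+e^{-c/\epsilon}\|u\|_{H^1(D)}+e^{c'/\epsilon}\mathcal{A}.
\]

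To reach an $H^{1/2}(\partial D)$ estimate I would use that $u_{|\partial D}\in C^{1,\alpha}(\partial D)\hookrightarrow H^1(\partial D)$, with $\|u_{|\partial D}\|_{H^1(\partial D)}\le C\|u\|_{C^{1,\alpha}(\overline D)}$, and the interpolation identity $H^{1/2}(\partial D)=[L^2(\partial D),H^1(\partial D)]_{1/2}$, whence
\[
\|u_{|\partial D}\|_{H^{1/2}(\partial D)}\le C\|u\|_{L^\infty(\partial D)}^{1/2}\|u\|_{C^{1,\alpha}(\overline D)}^{1/2}\le \mu\|u\|_{C^{1,\alpha}(\overline D)}+\mu^{-1}\|u\|_{L^\infty(\partial D)}
\]
for any $\mu>0$, by Young's inequality. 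Feeding this together with the previous display into Lemma \ref{lemma1.3},
\[
C\|u\|_{H^1(D)}\le \|Lu\|_{L^2(D)}+\|u_{|\partial D}\|_{H^{1/2}(\partial D)},
\]
and choosing $\mu$ a suitable power of $\epsilon$, I obtain
\[
C\|u\|_{H^1(D)}\le \epsilon^{\beta'}\|u\|_{C^{1,\alpha}(\overline D)}+\tfrac{1}{2}\|u\|_{H^1(D)}+e^{c''/\epsilon}\mathcal{A},
\]
after which absorption of the $\|u\|_{H^1(D)}$ term on the right-hand side yields \eqref{1.33} (possibly after renaming the exponents and constants).

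The main obstacle will be the bookkeeping: one has to coordinate four independent small parameters ($\eta_0$ in Step 1, $\eta_1$ in Step 2, $\mu$ in the interpolation, and the $\epsilon$ of Proposition \ref{proposition1.1}) so that every occurrence of $\|u\|_{H^1(D)}$ on the right-hand side carries a coefficient strictly less than the constant on the left, while all coefficients of $\mathcal{A}$ remain controlled by a single $e^{c/\epsilon}$ and all coefficients of $\|u\|_{C^{1,\alpha}(\overline D)}$ remain controlled by a single $\epsilon^{\beta}$. The interpolation step is essentially the only place where the $C^{1,\alpha}$ regularity is used in an unavoidable way (beyond $L^\infty$-Hölder control), and pinning down the right power $\beta$ requires tracking how the Hölder seminorm enters through Proposition \ref{proposition1.1}.
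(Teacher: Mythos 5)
Your proof is correct and follows essentially the same approach as the paper: chaining Propositions \ref{proposition1.3}, \ref{proposition1.2}, \ref{proposition1.1} and Lemma \ref{lemma1.3} and then balancing the small parameters. The paper's version is slightly more economical — it uses the embedding $W^{1,\infty}(\partial D)\hookrightarrow H^{1/2}(\partial D)$ directly (avoiding your interpolation step and the extra parameter $\mu$) and dominates $\|u\|_{H^1(D)}$ by $\|u\|_{C^{1,\alpha}(\overline D)}$ upfront so no absorption is needed — but these are cosmetic simplifications of the same argument.
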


\begin{proof}
Henceforward, the generic constants $C>0$ and $c>0$ can only depend on $D$, $\kappa$, $\varkappa$, $\alpha$ and $\mathcal{C}$.

Let $u\in C^{1,\alpha}(\overline{D} )$ satisfying $Lu\in L^2(D)$. Then, from Lemma \ref{lemma1.3}, we have 
\[
C\| u\|_{H^1(D)}\le \|Lu\|_{L^2(D)}+ \|u\|_{H^{1/2}(\partial D)}.
\]
By Proposition \ref{proposition1.1} and noting that $W^{1,\infty}(\partial D )$ is continuously embedded in $H^{1/2}(\partial D)$, there exist $\beta >0$ and $\omega \Subset  D$ so that, for any $0<\epsilon <1$,
\begin{equation}\label{1.34}
C\|u\|_{H^1(D)}\le \epsilon ^\beta \|u\|_{C^{1,\alpha}(\overline{D} )}+e^{c/\epsilon}\left(\|u\|_{H^1(\omega )}+\|Lu\|_{L^2(D)}\right).
\end{equation}

On the other hand, by Proposition \ref{proposition1.3}, there exist $\omega _0\Subset \Omega$ and $\gamma >0$ so that, for any $\epsilon_1 >0$,
\begin{equation}\label{1.35}
C\|u\|_{H^1(\omega _0)}\le \epsilon_1 ^\gamma \|u\|_{C^{1,\alpha}(\overline{D} )} +\epsilon_1 ^{-1}\left(\|u\|_{L^2(\mathcal{C})}+\|\nabla u\|_{L^2(\mathcal{C})}+\|Lu\|_{L^2(D)}\right).
\end{equation}
But, by Proposition \ref{proposition1.2}, there is $\delta >0$ such that, for any $\epsilon_2 >0$,
 \begin{equation}\label{1.36}
 C\|u\|_{H^1(\omega )}\leq \epsilon_2 ^\delta \|u\|_{C^{1,\alpha}(\overline{D} )}+\epsilon_2 ^{-1}\left(\|u\|_{H^1(\omega _0)}+\|Lu\|_{L^2(D)}\right).
 \end{equation}
Estimate \eqref{1.35} in \eqref{1.36} gives
\begin{align*}
C\|u\|_{H^1(\omega )}&\le (\epsilon_2 ^\delta +\epsilon_2 ^{-1} \epsilon_1 ^\gamma) \|u\|_{C^{1,\alpha}(\overline{D} )} 
\\
&\quad +\epsilon_1 ^{-1}\epsilon_2 ^{-1}\left(\|u\|_{L^2(\mathcal{C})}+\|\nabla u\|_{L^2(\mathcal{C})}+\|Lu\|_{L^2(D)}\right)+\epsilon_2 ^{-1}\|Lu\|_{L^2(D)}.
\end{align*}
The choice of $\epsilon _1=\epsilon_2^{\frac{\gamma +1}{\delta}}$ in this estimate yields, where $\varrho=\frac{\gamma +\delta +1}{\delta}$,
\begin{align*}
C\|u\|_{H^1(\omega )}&\le \epsilon_2 ^\delta \|u\|_{C^{1,\alpha}(\overline{D} )} 
\\
&\quad +\epsilon_2 ^{-\varrho}\left(\|u\|_{L^2(\mathcal{C})}
+\|\nabla u\|_{L^2(\mathcal{C})^n}+\|Lu\|_{L^2(D)}\right)+\epsilon_2 ^{-1}\|Lu\|_{L^2(D)},
\end{align*}
which, in combination with \eqref{1.34}, implies
\begin{align*}
&C\|u\|_{H^1(D)}\le (\epsilon ^\beta +\epsilon_2 ^\delta e^{c/\epsilon})\|u\|_{C^{1,\alpha}(\overline{D} )}
\\
&\qquad +\epsilon _2^{-\varrho} e^{c/\epsilon}\left(\|u\|_{L^2(\mathcal{C})}+\|\nabla u\|_{L^2(\mathcal{C})}+\|Lu\|_{L^2(D)}\right)+(\epsilon_2 ^{-1}+1)e^{c/\epsilon}\|Lu\|_{L^2(D)}.
\end{align*}
Therefore,
\begin{align*}
C\|u\|_{H^1(D)}&\le (\epsilon ^\beta +\epsilon_2 ^\delta e^{c/\epsilon})\|u\|_{C^{1,\alpha}(\overline{D} )}
\\
&+\left(\epsilon _2^{-\varrho}+\epsilon_2 ^{-1}+1\right) e^{c/\epsilon}\left(\|u\|_{L^2(\mathcal{C})}+\|\nabla u\|_{L^2(\mathcal{C})}+\|Lu\|_{L^2(D)}\right).
\end{align*}
We end up getting the expected inequality by taking $\epsilon _2=e^{-2c/(\epsilon \delta)}$.
\end{proof}

As a first consequence of the preceding theorem, we have the uniqueness of continuation of solutions from Cauchy data on $\mathcal{C}$.

\begin{corollary}\label{corollary1+}
Let $\mathcal{C}$ be a nonempty open subset of $\partial D$ and let $0<\alpha \le 1$. If $u\in C^{1,\alpha}(\overline{D} )$ satisfies $Lu=0$ in $D$ and $u=\partial_\nu u=0$ on $\mathcal{C}$, then $u=0$.
\end{corollary}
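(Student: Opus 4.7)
The proof is a direct deduction from Theorem \ref{theorem1.4}. The plan is to apply the stability estimate \eqref{1.33} to the given $u$, observe that all three data terms on the right-hand side vanish under the hypotheses, and then conclude by sending $\epsilon \to 0$.

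First I would verify that $\|\nabla u\|_{L^2(\mathcal{C})}=0$. Since $u\equiv 0$ on $\mathcal{C}$, all tangential derivatives of $u$ vanish on $\mathcal{C}$; combined with the assumption $\partial_\nu u = 0$ on $\mathcal{C}$, we conclude that the full gradient $\nabla u$ vanishes on $\mathcal{C}$. Together with $Lu=0$ in $D$ and $u=0$ on $\mathcal{C}$, this makes each of the three terms
\[
\|u\|_{L^2(\mathcal{C})},\quad \|\nabla u\|_{L^2(\mathcal{C})},\quad \|Lu\|_{L^2(D)}
\]
equal to zero.

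Next I would apply Theorem \ref{theorem1.4} with this $u$. The inequality \eqref{1.33} reduces to
\[
C\|u\|_{H^1(D)}\le \epsilon ^\beta \|u\|_{C^{1,\alpha}(\overline{D})},\quad 0<\epsilon<1.
\]
Since $u\in C^{1,\alpha}(\overline{D})$, the norm $\|u\|_{C^{1,\alpha}(\overline{D})}$ is a fixed finite quantity, and $\beta>0$. Letting $\epsilon\to 0^+$ forces $\|u\|_{H^1(D)}=0$, hence $u=0$ in $D$.

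There is no serious obstacle here: the entire content of the corollary is packed into the stability estimate of Theorem \ref{theorem1.4}, and the only small point to check is that the Cauchy data $u=\partial_\nu u=0$ on $\mathcal{C}$ is indeed enough to annihilate the full gradient trace $\|\nabla u\|_{L^2(\mathcal{C})}$, which is immediate from decomposing the gradient into its tangential and normal components on the $C^{1,\alpha}$ regular piece of boundary.
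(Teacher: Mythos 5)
Your proof is correct and is exactly the argument the paper intends (the paper merely states the corollary as an immediate consequence of Theorem \ref{theorem1.4} without spelling out the details). One small remark: you speak of the ``$C^{1,\alpha}$ regular piece of boundary,'' but $D$ is only assumed Lipschitz; what is $C^{1,\alpha}$ is $u$, not $\partial D$. The argument still goes through, since the outward normal exists a.e.\ on a Lipschitz boundary and the tangential derivatives of a $C^1$ function vanishing on $\mathcal{C}$ vanish a.e.\ there, so together with $\partial_\nu u=0$ one indeed gets $\nabla u=0$ a.e.\ on $\mathcal{C}$ and hence $\|\nabla u\|_{L^2(\mathcal{C})}=0$.
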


Define, for $c>0$ and $\beta >0$, the function $\Phi_{c,\beta}$ by
\[
\Phi_{c,\beta}(\rho)=\left\{ \begin{array}{ll} 0\quad &\mbox{if}\; \rho =0,\\  |\ln \rho |^{-\beta}\quad &\mbox{if}\; 0<\rho < e^{-c}, \\ \rho &\mbox{if}\; \rho \ge e^{-c}. \end{array}\right.
\]

Let $\mathcal{C}$ be a nonempty open subset of $\partial D$ and  set
\[
\mathfrak{D}(u)=\|u\|_{L^2(\mathcal{C})}+\|\nabla u\|_{L^2(\mathcal{C})}+\|Lu\|_{L^2(D)}.
\]

\begin{corollary}\label{corollary2+}
Let  $0<\alpha \le 1$. There exist $C>0$, $c >0$ and $\beta >0$, only depending on $D$, $\kappa$, $\varkappa$, $\alpha$ and $\mathcal{C}$, so that, for any $0\ne u\in C^{1,\alpha}(\overline{D} )$ satisfying $Lu\in L^2(D)$, we have
\begin{equation}\label{+2}
C\|u\|_{H^1(D)}\le \|u\|_{C^{1,\alpha}(\overline{D} )}\Phi_{c,\beta}\left(\frac{\mathfrak{D}(u)}{\|u\|_{C^{1,\alpha}(\overline{D} )}}  \right).
\end{equation}
\end{corollary}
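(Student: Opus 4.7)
The plan is to deduce Corollary \ref{corollary2+} from the quantitative estimate of Theorem \ref{theorem1.4} by optimizing the free parameter $\epsilon$. Writing $M = \|u\|_{C^{1,\alpha}(\overline{D})}$, which is strictly positive since $u\neq 0$, and $\mathfrak{d} = \mathfrak{D}(u)/M$, dividing \eqref{1.33} by $M$ yields
\[
C\,\frac{\|u\|_{H^1(D)}}{M} \le \epsilon^\beta + e^{c/\epsilon}\,\mathfrak{d}, \qquad 0 < \epsilon < 1.
\]
It remains to choose $\epsilon$ as a function of $\mathfrak{d}$ so that the right-hand side is optimally small, and to interpret the result in terms of $\Phi_{c,\beta}$.

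I would split the argument into two regimes. In the \emph{small-data} regime, fix some threshold $\varrho_0 \in (0, e^{-2c})$ (to be chosen later) and assume $0 < \mathfrak{d} < \varrho_0$. Then $|\ln \mathfrak{d}| > 2c$, so I can set $\epsilon := 2c/|\ln \mathfrak{d}| \in (0,1)$. With this choice $e^{c/\epsilon} = \mathfrak{d}^{-1/2}$, hence $e^{c/\epsilon}\mathfrak{d} = \mathfrak{d}^{1/2}$, and the estimate becomes
\[
C\,\frac{\|u\|_{H^1(D)}}{M} \le (2c)^\beta |\ln \mathfrak{d}|^{-\beta} + \mathfrak{d}^{1/2}.
\]
Choosing $\varrho_0$ small enough (depending only on $c$ and $\beta$) forces $\mathfrak{d}^{1/2} \le |\ln \mathfrak{d}|^{-\beta}$, so the right-hand side is bounded by a constant multiple of $|\ln \mathfrak{d}|^{-\beta}$. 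After adjusting $C$ this gives the desired inequality when $\mathfrak{d} < \varrho_0$.

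In the \emph{large-data} regime $\mathfrak{d} \ge \varrho_0$, I would simply use the trivial embedding $\|u\|_{H^1(D)} \le C_0\|u\|_{C^{1,\alpha}(\overline{D})} = C_0 M$ (valid since $D$ is bounded). Then $\|u\|_{H^1(D)} \le C_0 M \le (C_0/\varrho_0)\,M\,\mathfrak{d}$, which, after absorbing $C_0/\varrho_0$ into the constant $C$, gives $C\|u\|_{H^1(D)} \le M\,\mathfrak{d}$. It suffices now to choose the constant appearing in the definition of $\Phi_{c,\beta}$ to be the same $c$ with $e^{-c} = \varrho_0$, so that both regimes match the piecewise definition of $\Phi_{c,\beta}$ exactly.

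The only delicate point is the balancing at the transition: one must ensure that the logarithmic term in the small-data regime actually dominates the leftover $\mathfrak{d}^{1/2}$, which is what forces $\varrho_0$ to be taken sufficiently small depending on $\beta$ and $c$. This is a routine calibration and presents no genuine obstacle; no additional PDE ingredient is required beyond Theorem \ref{theorem1.4} and the continuous embedding $C^{1,\alpha}(\overline{D}) \hookrightarrow H^1(D)$.
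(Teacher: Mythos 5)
Your argument is essentially the same as the paper's: both divide by $M=\|u\|_{C^{1,\alpha}(\overline D)}$, both split into a small-data regime (optimize $\epsilon$ in Theorem~\ref{theorem1.4}) and a large-data regime (trivial embedding $C^{1,\alpha}(\overline D)\hookrightarrow H^1(D)$). The only cosmetic difference is your explicit choice $\epsilon=2c/|\ln\mathfrak d|$ versus the paper's implicit choice of $\epsilon_0$ solving $\epsilon_0^\beta e^{-c/\epsilon_0}=\mathfrak d$ followed by the bound $\epsilon_0\le(c+\beta)|\ln\mathfrak d|^{-1}$; these give the same $|\ln\mathfrak d|^{-\beta}$ rate.

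There is, however, one small but genuine gap. You write ``assume $0<\mathfrak d<\varrho_0$'' and treat $\mathfrak d\ge\varrho_0$ separately, but you never address the possibility $\mathfrak d=0$, and indeed your argument silently presupposes $\mathfrak D(u)>0$ (you take $\ln\mathfrak d$). This is not harmless: if $\mathfrak D(u)=0$ with $u\ne0$ the claimed inequality would read $C\|u\|_{H^1(D)}\le \|u\|_{C^{1,\alpha}(\overline D)}\,\Phi_{c,\beta}(0)=0$, which is false. The point is that this case cannot occur, and showing that requires an appeal to the uniqueness of continuation result, Corollary~\ref{corollary1+}: if $Lu=0$ in $D$ and $u=\partial_\nu u=0$ on $\mathcal C$ then $u=0$. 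The paper invokes this explicitly to ensure $\mathfrak b\ne0$; you should do so as well before taking the logarithm of $\mathfrak d$.
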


\begin{proof}
Pick $u\in C^{1,\alpha}(\overline{D} )$ satisfying $Lu\in L^2(D)$ and $u\ne 0$. For sake of simplicity, we use in this proof the following temporary notations
\begin{align*}
&\mathfrak{a}=\|u\|_{C^{1,\alpha}(\overline{D} )},
\\
&\mathfrak{b}=\mathfrak{D}(u).
\end{align*}
Since $u\ne 0$, it follows from Corollary \ref{corollary1+} that $\mathfrak{b}\ne 0$. Then, according to Theorem \ref{theorem1.4}, we have
\begin{equation}\label{+1}
C\frac{\|u\|_{H^1(D)}}{\mathfrak{a}}\le \epsilon^{\beta}+\frac{\mathfrak{b}}{\mathfrak{a}}e^{c\epsilon},\quad 0<\epsilon <1.
\end{equation}
Assume first that $\mathfrak{b}/\mathfrak{a}< e^{-c}$. Since the function $\epsilon \rightarrow \epsilon^\beta e^{-c/\epsilon}$, extended by continuity at $\epsilon=0$, is nondecreasing, we may find $0<\epsilon_0 <1$ so that $\epsilon_0^\beta e^{-c/\epsilon_0}=\mathfrak{b}/\mathfrak{a}$. Therefore
\[
\frac{\mathfrak{a}}{\mathfrak{b}}=\frac{1}{\epsilon_0^\beta}e^{c/\epsilon_0}\le e^{(c+\beta)/\epsilon_0}
\]
and hence
\[
\epsilon_0 \le (c+\beta )|\ln (\mathfrak{b}/\mathfrak{a})|^{-1}.
\]
Then $\epsilon =\epsilon_0$ in \eqref{+1} yields the expected inequality in this case. We end the proof by noting that \eqref{+2} is obvious satisfied if $\mathfrak{b}/\mathfrak{a}\ge e^{-c}$. Indeed, in that case we have $C\|u\|_{H^1(D)}\le \mathfrak{a}\le e^c\mathfrak{b}=e^c\mathfrak{a}\Phi_{c,\beta}(\mathfrak{b}/\mathfrak{a})$.
\end{proof}

\end{document}